\newif\ifcomment
\newcommand\blfootnote[1]{%
  \begingroup
  \renewcommand\thefootnote{}\footnote{#1}%
  \addtocounter{footnote}{-1}%
  \endgroup
}
 \setlist[enumerate,1]{label=(\roman*)}
 \setlist[enumerate,2]{label=\alph*)}
 \setlist[enumerate,3]{label=\arabic*)}
\setlist[description]{leftmargin=\parindent,labelindent=\parindent}
\def\cleartheorem#1{%
    \expandafter\let\csname#1\endcsname\relax
    \expandafter\let\csname c@#1\endcsname\relax
}
\colorlet{myGreen}{green!50!black}
\colorlet{myLightgreen}{green}
\colorlet{myRed}{red!90!black}
\definecolor{myBlue}{rgb}{0.25, 0.0, 1.0}
\definecolor{myLightBlue}{rgb}{0.39, 0.58, 0.93}
\colorlet{myViolet}{myBlue!55!myRed}
\definecolor{myOrange}{rgb}{1.0, 0.66, 0.07}
\definecolor{magenta}{rgb}{0.94, 0.05, 0.53}
\definecolor{AO}{rgb}{0.0, 0.5, 0.0}
\definecolor{phthaloblue}{rgb}{0.0, 0.06, 0.54}
\definecolor{pistachio}{rgb}{0.58, 0.77, 0.45}
\definecolor{darkgoldenrod}{rgb}{0.72, 0.53, 0.04}
\newcommand{\AvN}[1]{\Fkt{Av}{G}} %
\newcommand{\AsN}[1]{\Fkt{As}{G}} %
 \title{Graphs with at most two moplexes}
\date{}
\DeclareRobustCommand{\authorthing}{
\begin{center}
Cl\'{e}ment Dallard\\
{\small LIFO EA 4022, INSA Centre Val de Loire, Université d'Orléans, Orléans, France} \\
\url{clement.dallard@univ-orleans.fr}\\
\smallskip
Robert Ganian\\ 
{\small Algorithms and Complexity Group, TU Wien, Vienna, Austria}\\
\url{rganian@gmail.com}\\ 
\smallskip
Meike Hatzel\\
{\small National Institute of Informatics, Tokyo, Japan}\\
\url{research@meikehatzel.com}\\
\smallskip
Matja\v{z} Krnc\\
{\small FAMNIT, University of Primorska, Koper, Slovenia} \\
\url{matjaz.krnc@upr.si}\\
\smallskip
Martin Milani{\v c}\\
{\small FAMNIT and IAM, University of Primorska, Koper, Slovenia} \\
\url{martin.milanic@upr.si}
\end{center}}
\author{\authorthing}
\begin{document}
\maketitle

\begin{textblock}{20}(-2, 3.2)
   \includegraphics[width=80px]{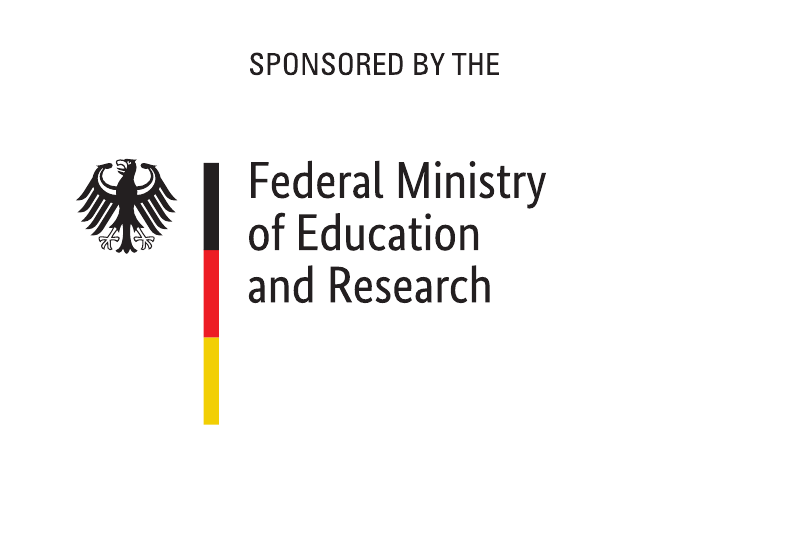}%
\end{textblock}

\begin{abstract}
    \blfootnote{This research was funded in part by the Slovenian Research Agency (I0-0035, research programs P1-0285 and P1-0383, and research projects J1-3001, J1-3002, J1-3003, J1-4008, J1-9110, N1-0102, N1-0160, and N1-0209).
  The authors gratefully acknowledge 
  the Republic of Slovenia (Investment funding of the Republic of Slovenia and the European Union of the European Regional Development Fund).
  Robert Ganian acknowledges support from the Austrian Science Fund (FWF project Y1329).
  Meike Hatzel's research has been supported by the European Research Council (ERC) under the European Union's Horizon 2020 research and innovation programme (ERC Consolidator Grant DISTRUCT, grant agreement No 648527) as well as by the Federal Ministry of Education and Research (BMBF) and by a fellowship within the IFI programme of the German Academic Exchange Service (DAAD).}

    A moplex is a natural graph structure that arises when lifting Dirac's classical theorem from chordal graphs to general graphs.
    The notion is known to be closely related to lexicographic searches in graphs as well as to asteroidal triples, and has been applied in several algorithms related to graph classes such as interval graphs, claw-free, and diamond-free graphs.
	However, while every non-complete graph has at least two moplexes, little is known about structural properties of graphs with a bounded number of moplexes.
	The study of these graphs is, in part, motivated by the parallel between moplexes in general graphs and simplicial modules in chordal graphs: unlike in the moplex setting, properties of chordal graphs with a bounded number of simplicial modules are well understood.
	For instance, chordal graphs having at most two simplicial modules are interval.
	
    In this work we initiate an investigation of $k$-moplex graphs, which are defined as graphs containing at most $k$ moplexes.
    Of particular interest is the smallest nontrivial case $k=2$, which forms a counterpart to the class of interval graphs. 
    As our main structural result, we show that, when restricted to connected graphs, the class of $2$-moplex graphs is sandwiched between the classes of proper interval graphs and cocomparability graphs; moreover, both inclusions are tight for hereditary classes.
    From a complexity theoretic viewpoint, this leads to the natural question of whether the presence of at most two moplexes guarantees a sufficient amount of structure to efficiently solve problems that are known to be intractable on cocomparability graphs, but not on proper interval graphs.
    We develop new reductions that answer this question negatively for two prominent problems fitting this profile, namely \textsc{Graph Isomorphism} and \textsc{Max-Cut}.
    On the other hand, we prove that every connected $2$-moplex graph contains a Hamiltonian path, generalising the same property of connected proper interval graphs.
    Furthermore, for graphs with a higher number of moplexes, we lift the previously known result that graphs without asteroidal triples have at most two moplexes to the more general setting of larger asteroidal sets.
    \blfootnote{A preliminary and shorter version of this article appeared in the proceedings of the XI Latin and American Algorithms, Graphs and Optimization Symposium (LAGOS 2021)~\cite{DALLARD2021-lagos}.}
\end{abstract}

\section{Introduction}
\label{sec:intro}

The fundamental class of \emph{chordal graphs}, i.e.\ graphs where every cycle of length at least four has a chord, has been extensively studied in the literature. 
A celebrated result by Dirac states that every non-complete chordal graph has at least two non-adjacent simplicial vertices~\cite{Dirac1961}.
Equivalently, every non-complete chordal graph contains at least two \emph{simplicial modules}, that is, maximal clique modules containing a simplicial vertex~\cite{Cao16,Shibatacliquetrees}.
Moreover, since simplicial modules are present in every chordal graph, one could classify chordal graphs into ``slices'' based on how many simplicial modules they have---for instance, it is easy to observe that every chordal graph with at most two simplicial modules is an interval graph (due to a well-known connection between simplicial modules and the \emph{leafage} of chordal graphs and a characterisation of interval graphs via leafage~\cite{lin1998leafage}).

On the other hand, there are many general graphs that do not contain any simplicial vertices, and attempting to use simplicial modules to slice up the class of all graphs in a similar manner would not be meaningful.
But while simplicial modules do not have the same fundamental role in general graphs as in chordal graphs, the notion of \emph{moplexes} introduced by Berry and Bordat~\cite{MR1626534} promises to hold precisely that role. 
In particular, a moplex is an inclusion-maximal set of vertices such that (1) they form a clique, (2) they form a module, and (3) their neighbourhood, if non-empty, is a minimal 
separator,\footnote{Precise definitions are provided in \cref{sec:prelims}.}
and Berry and Bordat lifted Dirac's theorem to general graphs by showing that every non-complete graph has at least two non-adjacent moplexes~\cite{MR1626534}. Subsequent works have pointed to
connections between moplexes and asteroidal triples~\cite{berry2001asteroidal,meister2005recognition} as well as lexicographic searches~\cite{BerryB01,corneil2004lexicographic,xu2013moplex,Trotignon16}.

Moplexes have also been used in various algorithms, e.g., for computing a minimal completion to an interval graph~\cite{RapaportST08}, for computing minimal triangulations of claw-free graphs~\cite{berry2012triangulation}, and for recognising diamond-free graphs without induced cycles of length at least five~\cite{berry2015efficiently}.

But in spite of these fundamental connections and useful applications, the interconnection between structural properties of graphs and their moplexes is still not well understood. 
In this work we initiate an investigation into what happens if one uses moplexes to slice up the class of general graphs. 
Do graphs containing a bounded number of moplexes have useful structural or algorithmic properties? And if chordal graphs with at most two simplicial modules form a natural subclass of the fundamental class of interval graphs, what can we say about graphs with at most two moplexes?

For a positive integer $k$, a \emph{$k$-moplex graph} is a graph that contains at most $k$ moplexes, and, moreover, we let the \emph{moplex number} of a graph be the number of moplexes it contains.
Our first, introductory, result provides a link between the moplex number and the \emph{asteroidal number} of a graph~\cite{MR1844876,CorneilOS97}, generalising an earlier result of Berry and Bordat for graphs with at most two~moplexes~\cite{berry2001asteroidal}.

\vspace{-\lastskip}\par \addvspace{.3pc}
\begin{restatable}{theorem}{numbers}\label{thm:an-mn}
The asteroidal number of a graph is a lower bound on its moplex number. 
\end{restatable}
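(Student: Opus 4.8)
The plan is to take a maximum asteroidal set $S=\{s_1,\dots,s_k\}$, so that $k$ equals the asteroidal number, and to produce $k$ pairwise distinct moplexes of $G$, one associated with each $s_i$. We may assume that $G$ is connected and that $k\ge 2$: the case $k=1$ merely says that $G$ has at least one moplex, which is always true, and the disconnected case reduces to the connected one by a routine componentwise argument (the moplex number being additive over connected components). Recall that asteroidal sets are independent, so $s_i\not\sim s_j$ whenever $i\ne j$.

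For each $i$, let $D_i$ be the connected component of $G-N[s_i]$ containing $S\setminus\{s_i\}$, which is well defined precisely because $S$ is asteroidal. Put $T_i:=N(D_i)$ and let $C_i$ be the connected component of $G-T_i$ that contains $s_i$. I would first record the standard properties of such ``close'' separators: $T_i\subseteq N(s_i)$, the set $T_i$ is a minimal separator, and both $D_i$ and $C_i$ are full components of $G-T_i$; fullness of $C_i$ holds because $s_i\in C_i$ is adjacent to every vertex of $T_i\subseteq N(s_i)$. By construction $S\setminus\{s_i\}\subseteq D_i$ and $C_i\cap D_i=\emptyset$, so that $S\cap C_i=\{s_i\}$.

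The crux is to prove that the full components $C_1,\dots,C_k$ are pairwise disjoint. Assume $v\in C_i\cap C_j$ for some $i\ne j$ and pick a path $P$ from $s_i$ to $v$ inside the connected set $C_i$, so that $P$ avoids $T_i$. Since $s_i\in D_j$ and $v\in C_j$ lie in distinct components of $G-T_j$, the path $P$ must meet $T_j$; choose $t\in P\cap T_j$. Then $t\in C_i$, so $t\notin T_i$, and $t\sim s_j$ because $T_j\subseteq N(s_j)$. As $s_j\notin T_i\subseteq N(s_i)$ by independence, following $P$ from $s_i$ to $t$ and then the edge $ts_j$ gives a path from $s_i$ to $s_j$ avoiding $T_i$; hence $s_j\in C_i$. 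This contradicts $s_j\in D_i$ together with $C_i\ne D_i$, so the $C_i$ are indeed disjoint.

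To finish, I would invoke the lemma of Berry and Bordat underpinning their two-moplex theorem: every full component of a minimal separator contains a moplex of $G$. Applied to each $C_i$, this yields a moplex $M_i\subseteq C_i$, and disjointness of the $C_i$ forces the $M_i$ to be distinct. Therefore $G$ has at least $k$ moplexes, i.e.\ its moplex number is at least its asteroidal number. I expect the disjointness argument to be the main obstacle; the close-separator bookkeeping and the appeal to the full-component moplex lemma are comparatively routine, the latter being exactly the device by which Berry and Bordat extract two moplexes from the two full components of a single minimal separator.
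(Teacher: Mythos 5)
Your proof is correct, but it takes a genuinely different route from the paper. The paper derives the inequality from a stronger statement (its \cref{thm:asteroidal}): a graph has an asteroidal set of $k$ vertices if and only if it has an asteroidal set of $k$ moplexes. Its proof is an exchange argument: choose an asteroidal set of size $k$ maximising the number of moplicial vertices, and if some $a_i$ is not moplicial, use exactly your ``close separator'' construction (the component $C$ of $G-N[a_i]$ containing the rest of the set, the separator $N(V(C))$, and the component $D$ containing $a_i$) together with \cref{moplex in each CC} to find a moplex in $D$ and swap $a_i$ for one of its vertices, contradicting maximality. You share the local construction and the appeal to \cref{moplex in each CC}, but you avoid the extremal argument entirely: your disjointness lemma for the components $C_1,\dots,C_k$ (which is correct --- the path from $s_i$ to a putative common vertex must cross $T_j\subseteq N(s_j)$, dragging $s_j$ into $C_i$ and contradicting $s_j\in D_i$) lets you extract $k$ pairwise disjoint, hence distinct, moplexes in one step. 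This is arguably more direct for the inequality as stated; what it does not give is the paper's stronger conclusion that the $k$ moplexes themselves form an asteroidal set of moplexes, which the paper needs to recover the full equivalence generalising Berry and Bordat's $k=3$ result. Your reductions to the connected case and to $k\ge 2$ are routine and handled adequately.
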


\Cref{thm:an-mn} immediately implies that the nice algorithmic features of graphs with bounded asteroidal number also hold for graphs with a bounded number of moplexes.
This includes polynomial-time algorithms for various algorithmic problems~\cite{MR2019494,MR1686810,kratsch2012colouring,MR2432954},
existence of a spanning tree approximating vertex distances up to a constant additive term~\cite{MR1844876}, a constant factor approximation algorithm for treewidth~\cite{MR2159305}, and an upper bound on the treewidth in terms of the maximum degree~\cite{BODLAENDER199745}.
We remark that while computing the asteroidal number of a graph is \NP-hard~\cite{MR1639679}, the moplex number of a graph is polynomial-time computable~\cite{berry2001asteroidal}.

A graph class is \emph{hereditary} if it is closed under vertex deletion. 
The class of $1$-moplex graphs is hereditary, but not of particular interest, as it is precisely the class of complete graphs.
However, as one can verify using the family of paths, the class of $k$-moplex graphs is not hereditary for any $k\ge 2$.
This makes understanding the structure of $k$-moplex graphs significantly more challenging; in fact, even the structure of $2$-moplex graphs is not yet fully understood.
Berry and Bordat~\cite{berry2001asteroidal} showed that $2$-moplex graphs are AT-free and that all connected induced subgraphs of a graph $G$ are $2$-moplex if and only if $G$ is a proper interval graph.
We strengthen the former result and complement the latter by proving further results relating the class of $2$-moplex graphs to the hierarchy of hereditary graph classes.
More precisely, any graph class $\mathcal{G}$ can be naturally mapped to the following two hereditary graph classes, one contained in $\mathcal{G}$ and one containing~$\mathcal{G}$:
\begin{enumerate}
\item\label{item cochain} the class $\mathcal{G}^-$ of graphs all of whose induced subgraphs belong  to $\mathcal{G}$, or, equivalently, the largest hereditary graph class contained in $\mathcal{G}$, and
\item\label{item cocomp2} the class $\mathcal{G}^+$ of all induced subgraphs of graphs in $\mathcal{G}$, or, equivalently, the smallest hereditary graph class containing $\mathcal{G}$.
\end{enumerate}
Furthermore, if the graph class $\mathcal{G}$ is not closed under disjoint union (as is the case for the class of $2$-moplex graphs), it is also natural to consider the class $\mathcal{G}_c$ of all graphs $G$ such that every connected component of $G$ belongs to $\mathcal{G}$ and the corresponding two hereditary graph classes,  one contained in $\mathcal{G}_c$ and one containing~$\mathcal{G}_c$:
\begin{enumerate}[resume]
 \item \label{item PIGs} the class $\mathcal{G}_c^-$ of graphs all of whose induced subgraphs belong to $\mathcal{G}_c$, and
\item\label{item cocomp1} the class $\mathcal{G}_c^+$ of all induced subgraphs of graphs in $\mathcal{G}_c$.
\end{enumerate}
As previously mentioned, when $\mathcal{G}$ is the class of $2$-moplex graphs, a result of Berry and Bordat shows that the corresponding class from \hyperlink{item PIGs}{(iii)} is the class of proper interval graphs. 
We determine the remaining three hereditary classes related to $2$-moplex graphs.
We show that the corresponding class from~\hyperlink{item cochain}{(i)} is the class of cochain graphs, while the classes from~\hyperlink{item cocomp1}{(ii)} and~\hyperlink{item cocomp2}{(iv)} both coincide with the class of cocomparability graphs.
In particular, we prove the following two results.
\begin{restatable}{theorem}{CochainsMaximalHereditarySubclass}\label{thm:cochain}
Let $G$ be a graph. 
Then, every induced subgraph of $G$ is a $2$-moplex graph if and only if $G$ is a cochain graph.
\end{restatable}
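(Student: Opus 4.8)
The plan is to prove both implications through the forbidden-induced-subgraph characterisation of cochain graphs: a graph is a cochain graph if and only if it is $\{C_4, C_5, 3K_1\}$-free, where $3K_1$ denotes three pairwise non-adjacent vertices. This is obtained by complementing the standard description of chain graphs as the bipartite $2K_2$-free graphs, equivalently the $\{2K_2, K_3, C_5\}$-free graphs: since cochain graphs are the complements of chain graphs, the obstructions $2K_2$, $K_3$, $C_5$ complement to $C_4$, $3K_1$, $C_5$ respectively. I would first record this characterisation and then treat the two directions separately, observing throughout that the class of cochain graphs is hereditary.

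For the direction ``every induced subgraph of $G$ is $2$-moplex $\Rightarrow$ $G$ is cochain'', I argue by contraposition via the obstruction set. It suffices to verify that each of $C_4$, $C_5$, and $3K_1$ has at least three moplexes, for then any graph containing one of them as an induced subgraph has an induced subgraph that is not a $2$-moplex graph. These checks are routine: in $3K_1$ each of the three vertices is isolated and hence forms its own moplex; in $C_4$ and $C_5$ every singleton $\{v\}$ is a maximal clique module whose (nonempty) neighbourhood is a minimal separator, since no edge can be added without destroying the module property, giving four and five moplexes respectively. Consequently any graph all of whose induced subgraphs are $2$-moplex is $\{C_4, C_5, 3K_1\}$-free, i.e.\ a cochain graph.

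For the converse it suffices, by heredity of the cochain class, to show that every cochain graph is a $2$-moplex graph, and I would split into two cases according to connectivity. If a cochain graph $H$ is \emph{connected}, I first note that $H$ is a proper interval graph: being $3K_1$-free it contains no $K_{1,3}$, no net, and no asteroidal triple (each of these contains three pairwise non-adjacent vertices), while $\{C_4,C_5,3K_1\}$-freeness excludes every induced cycle of length at least four, so $H$ is chordal, claw-free and AT-free. The Berry--Bordat characterisation, that every connected induced subgraph of a graph is $2$-moplex exactly when the graph is proper interval, then applies to $H$ regarded as a connected induced subgraph of itself, yielding at most two moplexes. If $H$ is \emph{disconnected}, then $3K_1$-freeness forces at most two connected components, each of which must be a clique---otherwise two non-adjacent vertices of one component together with a vertex of another component would induce a $3K_1$---so $H$ is a disjoint union of two cliques whose only moplexes are the two components, and $H$ is again $2$-moplex.

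The step requiring the most care, and where a naive argument breaks down, is the disconnected case of the converse. One is tempted to argue simply that cochain graphs are proper interval and invoke Berry--Bordat directly, but that theorem controls only the \emph{connected} induced subgraphs; indeed $3K_1$ is itself a proper interval graph with three moplexes. The function of $3K_1$-freeness is precisely to cap the number of components at two and to force each of them to be complete, which is exactly what keeps the moplex count at two for disconnected cochain graphs. Apart from this point, the only other place demanding attention is the explicit moplex count for the three small obstructions in the forward direction, which reduces to the elementary observations above.
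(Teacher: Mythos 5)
Your proof is correct and follows essentially the same route as the paper's: the forward direction rests on the same obstructions ($3K_1$ and short induced cycles, each having at least three moplexes), and the converse is reduced in the same way to the connected case (claw-free, AT-free, chordal, hence proper interval, then the Berry--Bordat characterisation) together with the disjoint-union-of-two-cliques case for disconnected cochain graphs. The only cosmetic difference is that you finish the forward direction by invoking the $\{C_4, C_5, 3K_1\}$-free characterisation of cochain graphs, whereas the paper derives the cochain structure directly from chordality, independence number at most two, perfection, and $C_4$-freeness.
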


\vspace{-\lastskip}\par \addvspace{.3pc}
\begin{restatable}{theorem}{CocompIsMinimalHereditarySuperclass}\label{cocomp is minimal superclass of M2}
The smallest hereditary graph class containing the class of $2$-moplex graphs is the class of cocomparability graphs.
\end{restatable}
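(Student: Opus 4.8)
The statement asserts the equality of two hereditary classes, so the plan is to prove the two inclusions separately, writing $\mathcal{M}_2^{+}$ for the smallest hereditary class containing the $2$-moplex graphs (equivalently, the class of all induced subgraphs of $2$-moplex graphs). For the inclusion $\mathcal{M}_2^{+}\subseteq\mathrm{coComp}$ it suffices, since cocomparability graphs are hereditary, to check that every $2$-moplex graph is a cocomparability graph. I would split this into the connected and the disconnected case. In the connected case this is exactly the upper half of the sandwiching established earlier, namely that connected $2$-moplex graphs are cocomparability graphs. In the disconnected case I would first observe that a disconnected $2$-moplex graph must be a disjoint union of two complete graphs $K_a\cup K_b$: every complete component contributes one moplex and every non-complete component contributes at least two, so having two components and at most two moplexes forces both components to be cliques. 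Such a graph is a cocomparability graph because its complement is the complete bipartite graph $K_{a,b}$, which is a comparability graph.

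The substance of the theorem is the reverse inclusion $\mathrm{coComp}\subseteq\mathcal{M}_2^{+}$, i.e.\ that every cocomparability graph $H$ is an induced subgraph of some $2$-moplex graph. Since cocomparability graphs are closed under disjoint union and $H$ may be taken disconnected, it is enough to construct, for an arbitrary cocomparability graph $H$, a \emph{connected} $2$-moplex graph $G$ containing $H$ as an induced subgraph. The plan is to start from a cocomparability ordering $v_1<\dots<v_n$ of $H$ (a linear extension of a transitive orientation of $\overline{H}$) and to append two new vertices $s$ and $t$ at the two ends, attached to $V(H)$ according to this order so that the extended order $s<v_1<\dots<v_n<t$ is again a cocomparability ordering; cocomparability of $G$ and, with the right attachment, connectedness of $G$ then follow immediately, and $H$ is induced by construction.

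The heart of the argument is to choose the attachments of $s$ and $t$ so that $G$ has exactly two moplexes, which will be $\{s\}$ and $\{t\}$. I would organise this around the characterisation that a set $S$ is a minimal separator precisely when $G-S$ has at least two \emph{full} components (components $C$ with $N(C)=S$), noting that for a single vertex $v$ the component $\{v\}$ of $G-N_G(v)$ is always full. Thus making $\{s\}$ and $\{t\}$ moplexes amounts to exhibiting, for each of them, a second full component across its neighbourhood, while killing every interior vertex $v\in V(H)$ amounts to guaranteeing that no component of $G-N_G(v)$ other than $\{v\}$ is full; concretely, the new neighbour of $v$ among $\{s,t\}$ must be unreachable from the ``far side'' of $v$, so that this far side misses a vertex of $N_G(v)$.

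I expect this last point to be the main obstacle. A naive attempt, such as making $s$ and $t$ universal, fails: in $C_4$ two antipodal vertices always see each other's entire neighbourhood, so all four vertices of the cycle survive as moplexes no matter how many universal vertices are added. The attachment must therefore be tied to the order (equivalently, to the transitive orientation of $\overline{H}$) so that the antipodal full components are destroyed \emph{simultaneously} for all original vertices; at the same time one must avoid attaching $s$ or $t$ to a low-degree, ``buried'' vertex, since then $N_G(s)$ or $N_G(t)$ would fail to be a minimal separator and the corresponding end would not be a moplex. The bulk of the proof is thus the precise definition of these two attachments together with a verification, by a short case analysis over the position of a vertex relative to the chosen splitting of the order, that $\{s\}$ and $\{t\}$ are indeed the only moplexes of $G$.
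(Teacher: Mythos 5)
Your first inclusion is sound and matches the paper: you reduce to showing that every $2$-moplex graph is a cocomparability graph, handle the connected case by the containment the paper establishes in \cref{M2 is cocomp}, and dispose of the disconnected case by noting that a disconnected $2$-moplex graph is a disjoint union of two complete graphs, whose complement $K_{a,b}$ is a comparability graph.

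The reverse inclusion is where the theorem actually lives, and there you have a genuine gap: the attachment rule for $s$ and $t$ is never defined and the verification is explicitly deferred, and moreover no such rule can exist, because adding only \emph{two} vertices to a cocomparability graph is not enough in general. Take $H=K_{1,6}$ with centre $c$ and leaves $\ell_1,\dots,\ell_6$ (a cocomparability graph, even an interval graph), and let $G$ be obtained from $H$ by adding two vertices $s,t$ with arbitrary adjacencies. For each $i$, let $M_i$ be the maximal clique module of $G$ containing $\ell_i$; the $M_i$ are pairwise distinct because the leaves are pairwise non-adjacent, and $c\in N(M_i)\subseteq\{c,s,t\}$ while $\ell_j\notin N[M_i]$ for $j\neq i$. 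If $M_i$ is not a moplex, then for every $j\neq i$ the component of $G-N[M_i]$ containing $\ell_j$ is not $N(M_i)$-full; since that component dominates $c$, some $x\in\{s,t\}$ must lie in $N(M_i)$ (hence be adjacent to $\ell_i$) and have no neighbour in that component, in particular $x\not\sim\ell_j$. Writing $L(x)=\{i : x\ell_i\in E(G)\}$, each non-moplex $M_i$ thus forces all five ordered pairs $(i,j)$ to be separated by $L(s)$ or $L(t)$; but a subset of size $a$ of $\{1,\dots,6\}$ separates only $a(6-a)\le 9$ ordered pairs, so two subsets separate at most $18$, whence at most three of the $M_i$ fail to be moplexes and $G$ has at least three moplexes. (Your more specific plan of making $\{s\}$ and $\{t\}$ the two moplexes while killing all original vertices already fails on $3K_1$: the only way to extend $3K_1$ to a $2$-moplex graph by two vertices is essentially $P_5$, whose two moplexes are \emph{original} vertices.) This is precisely why the paper's construction in \cref{thm:cocomp-are-induced-in-M2} adds $2n+2$ vertices rather than two: two cliques $A=\{a_1,\dots,a_n\}$ and $B=\{b_1,\dots,b_n\}$ attached to an umbrella-free ordering $(v_1,\dots,v_n)$ by $a_iv_j,\,v_ib_j\in E$ for $i\le j$, plus simplicial tips $u$ and $w$. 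Each $v_i$ then has \emph{private} neighbours $a_i$ and $b_i$, one of which is missed by each of the two components of $G'-N[v_i]$; it is this per-vertex supply of blockers that a single pair $s,t$ cannot provide, so the construction needs to be redesigned along these lines for your argument to go through.
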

\par\addvspace{.6pc}

We then consider the question of whether the structure of $2$-moplex graphs can be used to develop efficient algorithms for problems that are known to be intractable on cocomparability graphs, but not on proper interval graphs. 
We develop reductions showing that this is not the case for two prominent examples of such problems, namely \textsc{Max-Cut} and \textsc{Graph Isomorphism}, both of which remain as hard on $2$-moplex graphs as they are on cocomparability graphs.
For proper interval graphs, the complexity of \textsc{Max-Cut} is still open, while \textsc{Graph Isomorphism} is solvable in linear time~\cite{MR528025}.%

\begin{restatable}{theorem}{MaxCutNPComplete}\label{thmMaxCutNPComplete}
        \textsc{Max-Cut} is \NP-complete on cobipartite $2$-moplex graphs.%
\end{restatable}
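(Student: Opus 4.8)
The plan is to reduce from \textsc{Max-Cut}, which is \NP-complete, and to produce an instance that is simultaneously \emph{cobipartite} and a \emph{$2$-moplex graph}; since every non-complete graph already has at least two moplexes, it suffices to exhibit a construction whose moplex number is \emph{at most} two. Starting from a \textsc{Max-Cut} instance $H$, I would build a graph $G$ on vertex set $A\cup B$ in which $A$ and $B$ are declared to be cliques. This makes $G$ cobipartite by construction, and hence (being the complement of a bipartite graph) automatically a cocomparability graph, which is the right target in view of \cref{cocomp is minimal superclass of M2}. The vertices and edges of $H$ are encoded into the \emph{cross structure}, i.e.\ into the bipartite graph of edges running between $A$ and $B$, possibly after replacing single vertices by cliques of true twins of prescribed multiplicity to simulate weights. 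The guiding principle is the standard observation that in a cobipartite graph every maximum cut is forced to split each of the two cliques as evenly as possible, so the only freedom left to an optimal solution lies in how the split interacts with the cross edges; the cross edges are designed so that this residual optimisation is exactly a maximum cut of $H$, up to an additive constant.

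\textbf{Controlling the moplex number (the main obstacle).}
First I would verify that $G$ is connected and non-complete, which guarantees at least two moplexes. To force the moplex number down to two, I would equip $G$ with two \emph{poles}: a vertex $\alpha$ joined to all of $A$ and to nothing in $B$, and a symmetric vertex $\beta$ on the $B$ side. Each pole has a neighbourhood that is a clean minimal separator—deleting $N(\alpha)$ isolates $\alpha$ from the rest of the graph—so each pole (or the twin-clique replacing it) forms a moplex. The heart of the proof, and the step I expect to be hardest, is to show that these are the \emph{only} moplexes. Concretely, I would argue that for every non-pole clique-module $M$ the set $N(M)$ fails to be a minimal separator, the intuition being that the clique on $M$'s own side contributes ``redundant'' vertices to $N(M)$ that can be deleted while a smaller set still separates; and, separately, that no module straddling both cliques can be a moplex. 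This forces a careful case analysis of minimal separators and modules in the specific cobipartite graph, and it is exactly here that the cross edges must be tuned: they must be generic enough (pairwise distinct neighbourhoods, enough cross-connectivity, no accidental twinships creating a separating neighbourhood) to annihilate every spurious moplex, yet structured enough to still encode $H$.

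\textbf{Correctness of the cut correspondence.}
With the moplex count settled, the remaining task is comparatively routine: decompose the value of an arbitrary cut $(S,T)$ into the contribution of the internal clique edges, the pole edges, and the cross edges. The first two contribute a fixed value at the optimum—the cliques split evenly and each pole sits on the lighter side of its clique—while the cross-edge contribution equals a constant plus the size of the cut that $S$ induces on $H$. To prevent an optimal cut from cheating by adopting an unbalanced configuration rather than the intended per-vertex alignment, I expect to need the blow-up device mentioned above, realised in the unweighted setting by true-twin cliques of suitable size; I would check that these blow-ups preserve cobipartiteness and, crucially, do not create new moplexes (their neighbourhoods remain non-separating for middle vertices). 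Assembling both directions then yields that $H$ has a cut of size at least $t$ if and only if $G$ has a cut of size at least $t+c$ for the computed constant $c$. Together with the obvious membership of \textsc{Max-Cut} in \NP, this completes the polynomial-time reduction and establishes \NP-completeness on cobipartite $2$-moplex graphs.
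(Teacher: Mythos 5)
Your gadget for controlling the moplex number is essentially the one the paper uses: two ``apex'' vertices $u,w$ (your $\alpha,\beta$), each attached to all of one clique, so that every other vertex $a$ on that side satisfies $N[u]\subsetneq N[a]$; then $u$ has no neighbour outside $N[a]$, no component of $G-N[a]$ can dominate $N(a)\ni u$, and $a$ cannot be moplicial. That is the right idea, and your observation that no clique module can straddle the two cliques is also how one rules out mixed moplexes. However, your verification that the poles themselves \emph{are} moplexes is flawed: you argue that deleting $N(\alpha)$ ``isolates $\alpha$,'' but that only exhibits one $N(\alpha)$-full component (namely $\{\alpha\}$). For $N(\alpha)=A$ to be a \emph{minimal} separator you need a second $A$-full component, i.e.\ every vertex of $A$ must have a neighbour on the $B$-side. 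Your construction does not guarantee this (isolated vertices of $H$, or cross-edge-free padding twins introduced by your blow-up, violate it), and this is precisely why the paper's construction adds the extra vertices $a^*\in A'$ and $b^*\in B'$ that are complete to the opposite side: they force the opposite component to dominate $N(u)$ and $N(w)$, which is what actually certifies that $\{u\}$ and $\{w\}$ are moplexes.

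The second, larger gap is that you reduce from general \textsc{Max-Cut} and therefore must also prove hardness of the cobipartite encoding itself. The paper sidesteps this entirely by reducing from \textsc{Max-Cut} on cobipartite graphs, which is already known to be \NP-complete (this is the result cited as~\cite{MR1769833}); its only job is then the additive bookkeeping showing that a cut of size $k$ in $G$ corresponds to a cut of size $(|A|+1)^2+(|B|+1)^2+k$ in $G'$. In your version, the claim that ``every maximum cut splits each clique as evenly as possible'' is false without the blow-up (cross edges can favour an unbalanced split), so the true-twin multiplicities are load-bearing, yet they are never specified, and you would additionally have to re-check that the blown-up twin classes and padding do not themselves become moplexes or destroy the pole moplexes. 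As written, the proposal therefore has two unproven components — the cobipartite encoding of $H$ and the minimality of the pole separators — either of which prevents the argument from closing; the first can be eliminated by citing the known cobipartite hardness, and the second by adding universal cross vertices as in the paper's \cref{cobipartite to A_2}.
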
%

\begin{restatable}{theorem}{GraphIsomorphismGIComplete}\label{thmGraphIsomorphismGIComplete}
    \textsc{Graph Isomorphism} is \GI-complete on cobipartite $2$-moplex graphs.%
\end{restatable}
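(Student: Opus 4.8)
The plan is to first observe that membership in \GI{} is immediate, since \textsc{Graph Isomorphism} restricted to any fixed graph class is a special case of the general problem; the entire content lies in establishing \GI-hardness. I would obtain this by a polynomial reduction from \textsc{Graph Isomorphism} on bipartite graphs, which is well known to be \GI-complete and which remains \GI-complete under the harmless restriction that the input graphs have no isolated vertices (isolated vertices can be stripped off and counted separately without affecting the isomorphism type). Given a bipartite graph $B$ with sides $X$ and $Y$ and no isolated vertices, I would build a graph $H_B$ by turning $X$ into a clique $C_1$ and $Y$ into a clique $C_2$, keeping all edges of $B$ as the edges between $C_1$ and $C_2$, and finally adding two \emph{apex} vertices $\alpha$ and $\beta$ where $\alpha$ is made adjacent to exactly $C_1$ and $\beta$ to exactly $C_2$ (so that $\alpha\not\sim\beta$). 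The resulting graph $H_B$ is cobipartite, with cliques $C_1\cup\{\alpha\}$ and $C_2\cup\{\beta\}$, and it is connected since $B$ has at least one edge.

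The crux is the structural claim that $H_B$ has \emph{exactly} the two moplexes $\{\alpha\}$ and $\{\beta\}$. On the positive side I would verify that $\{\alpha\}$ is a moplex by showing that $N(\alpha)=C_1$ is a minimal separator: deleting $C_1$ isolates $\alpha$ on one side and leaves the clique $C_2\cup\{\beta\}$ on the other, and both are full components of this separator precisely because $B$ has no isolated vertex (so every vertex of $C_1$ has a neighbour in $C_2$); the argument for $\{\beta\}$ is symmetric. The harder, and for me the main, obstacle is to rule out every other moplex, and this is exactly where the apices do the work. For any candidate clique module $M\subseteq C_1$ the vertex $\alpha$ lies in $N(M)$ but is adjacent only to the $M$-side of the induced separation, so the opposite side can never be a full component and $N(M)$ fails to be a minimal separator; hence no subset of $C_1$, and symmetrically no subset of $C_2$, is a moplex. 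The no-isolated-vertex assumption additionally forces the only clique module containing $\alpha$ to be $\{\alpha\}$ itself, and rules out any clique module meeting both $C_1$ and $C_2$, since $\alpha$ would see some but not all of it. Carrying out this case analysis carefully, including the degenerate subcases where the common neighbourhood of $M$ exhausts $C_2$ or where $M$ is all of $C_1$, is the technical heart of the proof.

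Finally I would establish faithfulness, namely that $B_1\cong B_2$ if and only if $H_{B_1}\cong H_{B_2}$. The forward direction is routine: any isomorphism between $B_1$ and $B_2$ extends to the $H_{B_i}$ by fixing the two apices. For the converse, since moplexes are isomorphism invariant and $\{\alpha\}$, $\{\beta\}$ are the unique moplexes of each $H_{B_i}$, any isomorphism $\psi$ must map $\{\alpha_1,\beta_1\}$ onto $\{\alpha_2,\beta_2\}$, and therefore maps the unordered pair of cliques $\{N(\alpha_1),N(\beta_1)\}=\{C_1,C_2\}$ onto $\{C_1,C_2\}$, possibly swapping the two cliques. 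Deleting the apices and restricting $\psi$ then gives an isomorphism between the clique-completions $H_{B_1}-\{\alpha_1,\beta_1\}$ and $H_{B_2}-\{\alpha_2,\beta_2\}$ that sends the clique pair to the clique pair; as the edges between the two cliques are exactly the edges of the respective $B_i$, this restriction preserves them and hence is an isomorphism of $B_1$ and $B_2$, where a swap of the cliques merely corresponds to a side-swapping bipartite isomorphism, which is permitted. This completes the reduction and yields the desired \GI-completeness.
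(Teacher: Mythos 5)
Your construction and the structural analysis are essentially the paper's: the paper likewise turns the two colour classes into cliques, attaches one pendant-clique vertex ($u_i$, $w_i$) to each side, shows these are the only two moplexes, and argues faithfulness via the fact that an isomorphism of the augmented graphs must fix the pair of special vertices and hence the pair of cliques. Your arguments for ``$\{\alpha\}$ and $\{\beta\}$ are moplexes'', for the non-existence of further moplexes, and for the backward direction of faithfulness are all sound.

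There is, however, a genuine gap in the forward direction, caused by reducing from \textsc{Graph Isomorphism} on bipartite graphs with \emph{no isolated vertices} rather than on \emph{connected} bipartite graphs (which is what the paper does, and explicitly justifies). The graph $H_B$ is built from a \emph{chosen} bipartition $(X,Y)$ of $B$, and for a disconnected bipartite graph the bipartition is only unique up to independently swapping the two sides within each connected component; the isomorphism type of $H_B$ genuinely depends on this choice. Concretely, let $B$ be the disjoint union of a star with two leaves and a star with three leaves (no isolated vertices). Choosing the bipartition that puts all five leaves in $X$ and the two centres in $Y$ yields an $H_B$ with degree sequence $(2,4,5,5,6,6,6,6,6)$, whereas swapping the sides of the second star only yields an $H_B$ with degree sequence $(3,4,4,4,5,5,5,6,6)$; these are not isomorphic. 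Hence two isomorphic (indeed identical) input graphs $B_1\cong B_2$ can be mapped to non-isomorphic outputs $H_{B_1}\not\cong H_{B_2}$, and your claim that ``any isomorphism between $B_1$ and $B_2$ extends to the $H_{B_i}$ by fixing the two apices'' fails, because such an isomorphism need not map the chosen side of $B_1$ into a single chosen side of $B_2$. The fix is exactly the paper's: reduce from \textsc{Graph Isomorphism} on \emph{connected} bipartite graphs (still \GI-complete), where the bipartition is unique up to one global swap, which is absorbed by exchanging $\alpha$ and $\beta$. With that change your argument goes through.
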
%

\Cref{thmMaxCutNPComplete,thmGraphIsomorphismGIComplete} provide some indication that the class of $2$-moplex graphs is a significant generalisation of the class of connected proper interval graphs.
Nevertheless, as our final result we show that $2$-moplex graphs share the well-known structural property of proper interval graphs that connectedness is 
a sufficient condition for the existence of a Hamiltonian path~\cite{MR731128}.

\begin{restatable}{theorem}{MtwoHamiltonicity}\label{thmMtwoHamiltonicity}
    Every connected $2$-moplex graph has a Hamiltonian path.
\end{restatable}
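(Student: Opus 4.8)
The plan is to reduce to a rigid core and then build the path by a left-to-right sweep along a cocomparability order, using the bound on the number of moplexes precisely to guarantee that the sweep never gets stuck. First I would dispose of the trivial cases and fix notation: if $G$ is complete the claim is immediate, so assume $G$ is connected and non-complete, whence by the result of Berry and Bordat it has exactly two moplexes $A$ and $B$, which may moreover be chosen non-adjacent. By \cref{thm:an-mn} the graph $G$ is AT-free (its asteroidal number is at most two), and by \cref{cocomp is minimal superclass of M2} it is a cocomparability graph; these two facts are the only structural inputs I intend to use. I would then record a twin-reduction: since $A$ and $B$ are clique modules, any two vertices inside the same moplex are true twins, so deleting one vertex $v$ of a moplex of size at least two yields a connected graph $G-v$ with exactly the same two moplexes (with $A$ shrunk by one vertex), and a Hamiltonian path of $G-v$ extends to one of $G$ by reinserting $v$ next to its surviving twin. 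This lets me assume the structure is as rigid as possible while isolating the essential difficulty.

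Next I would set up the sweep. Fix a cocomparability (umbrella-free) order $v_1<\dots<v_n$ of $G$, that is, an order such that for all $i<j<k$, if $v_iv_k\in E(G)$ then $v_iv_j\in E(G)$ or $v_jv_k\in E(G)$. I would argue that such an order can be chosen with the vertices of $A$ forming a prefix and those of $B$ a suffix: the vertices of a module can be kept consecutive in a cocomparability order, and $(a,b)$ with $a\in A$ and $b\in B$ is a dominating pair of the connected AT-free graph $G$, placing the two moplexes at the two ends of the order. The goal is then to process $v_1,\dots,v_n$ from left to right while maintaining a Hamiltonian path $P_i$ of $G[\{v_1,\dots,v_i\}]$ with a controllable endpoint (morally $v_i$ itself): whenever $v_{i+1}$ is adjacent to the current endpoint the path grows directly, and the entire substance of the argument is to handle the case $v_{i+1}\notin N(v_i)$ by re-routing $P_i$ so that $v_{i+1}$ can be appended.

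The heart of the proof—and the step I expect to be the main obstacle—is to show that this sweep never stalls, i.e.\ that the invariant can always be restored when $v_{i+1}\notin N(v_i)$. I would argue by contradiction: a stall exposes a vertex $v_{i+1}$ whose already-processed neighbours cannot be reached without stranding a block of the current path, and I would show that such a configuration forces either an asteroidal triple consisting of $v_{i+1}$ together with representatives of the two extremities $A$ and $B$, or an inclusion-maximal clique module, disjoint from $A$ and $B$, whose neighbourhood is a minimal separator—that is, a third moplex. The first outcome contradicts AT-freeness and the second contradicts the assumption that $G$ has only two moplexes. Making this dichotomy precise is where the real work lies: one must translate the combinatorial failure of the re-routing into a genuine clique module with a minimal-separator neighbourhood, which requires a careful local analysis of the umbrella property around the stalled position together with the twin-reduced rigidity. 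It is exactly here that $2$-moplex graphs diverge from proper interval graphs, for which the canonical order already has all consecutive vertices adjacent and no stall can occur; the bound on moplexes is what substitutes for that stronger rigidity.

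Once the non-stalling claim is established, the sweep runs to completion and returns a Hamiltonian path of $G[\{v_1,\dots,v_n\}]=G$, and undoing the twin-reduction yields a Hamiltonian path of the original graph. I would expect the write-up to spend essentially all of its effort on the stall analysis of the third paragraph, with the reductions and the sweep bookkeeping being routine.
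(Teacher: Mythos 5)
Your high-level framing is reasonable and overlaps with the paper's actual strategy in its ingredients (reduce to a rigid core with singleton moplexes, use that $2$-moplex graphs are cocomparability graphs, place the two moplexes at the two ends of an umbrella-free ordering, and argue that a failure to extend a path would force a third moplex or an asteroidal triple). However, the proposal has a genuine gap: the entire content of the theorem is concentrated in your third paragraph, and there you only assert the key dichotomy --- that a ``stall'' forces either an asteroidal triple or a third moplex --- without any argument. You even acknowledge that ``making this dichotomy precise is where the real work lies,'' but that work is precisely what a proof must supply, and it is far from routine. In particular, it is not clear that the dichotomy is even true for an arbitrary umbrella-free ordering with your prefix/suffix normalisation: a greedy left-to-right sweep along a generic cocomparability order can stall in configurations that do not visibly yield a clique module whose neighbourhood is a minimal separator. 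The paper needs considerably more structure than umbrella-freeness to make this go through: it works with an umbrella-free \emph{LDFS} ordering (whose existence for cocomparability graphs is a nontrivial result of K\"ohler and Mouatadid), uses the theorem of Corneil et~al.\ that $\mathrm{DFS}^{+}$ of such an ordering produces a Hamiltonian path whenever one exists, and crucially invokes \cref{lem:local structure} (consecutive non-adjacent vertices in such an ordering are avoidable) together with \cref{thm:removing_avoidable_vertices} (deleting a non-moplicial avoidable vertex preserves all moplexes) to run an induction on a minimal counterexample. The contradiction is then extracted through three interlocking claims analysing the $\mathrm{DFS}^{+}$ predecessor relation around the rightmost non-edge $v_kv_{k+1}$, each of which uses the LDFS four-point property, not just the umbrella property.

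A second, smaller issue: your sweep maintains a Hamiltonian path of the processed prefix $G[\{v_1,\dots,v_i\}]$ with a ``controllable endpoint,'' but you never specify the invariant precisely enough to see that re-routing is local. The paper avoids this by not maintaining a prefix path at all; instead it deletes an avoidable vertex, applies the inductive hypothesis to get a Hamiltonian path of the smaller graph via $\mathrm{DFS}^{+}$, and shows the deleted vertex could be re-inserted --- contradicting the choice of $G$ as a counterexample. Without either the LDFS/$\mathrm{DFS}^{+}$ machinery or a concrete replacement for it, your argument does not close.
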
%

\Cref{thmMtwoHamiltonicity} resolves a conjecture from the preliminary version of this paper~\cite{DALLARD2021-lagos}.

The proof of this theorem is an interplay between properties of the class of cocomparability graphs, the Lexicographic Depth First Search algorithm, and the concept of avoidable vertices (also known as OCF-vertices)~\cite{MR4357319,bonamy2020avoidable,ohtsuki1976minimal}.

\section{Preliminaries}
\label{sec:prelims}

All graphs in this paper are finite, undirected, and simple. We assume familiarity with basic concepts in graph theory as used, e.g., by West~\cite{MR1367739}. 
We denote the vertex set and the edge set of a graph $G$ by $\V{G}$ and $\E{G}$, respectively. 
The closed and open neighbourhoods of a vertex $v$ in $G$ are denoted as $N[v]$ and $N(v)$, respectively.
These concepts are naturally extended to sets $X\subseteq V(G)$ so that $N[X]$ is defined as the union of all closed neighbourhoods of vertices in $X$, and $N(X)$ is defined as the set $N[X]\setminus X$.
A~\emph{clique} in $G$ is a set of pairwise adjacent vertices.
An~\emph{independent set} is a set of pairwise non-adjacent vertices. 
The \emph{independence number} of $G$ is the maximum size of an independent set in $G$.
A~\emph{(connected) component} of a graph is a maximal connected subgraph.
We sometimes identify components of a graph with their vertex sets.
Given two vertex sets $A$ and $B$ in $G$, we say that $A$ \emph{dominates} $B$ if every vertex in $B$ has a neighbour in $A$.

A \emph{cut-vertex} in a graph is a vertex whose deletion increases the number of connected components.
For a graph $G$ and $S\subseteq V(G)$, let $G-S$ be the subgraph of $G$ induced by $V-S$; if $S=\{v\}$ then we use $G-v$ as shorthand for $G-\{v\}$.
For two vertices $u,v \in \V{G}$, a set $S\subseteq V(G)\setminus\{u,v\}$ is a \emph{$u{,}v$-separator} if $u$ and $v$ belong to different components of $G-S$, and it is a \emph{minimal $u{,}v$-separator} if additionally no proper subset of $S$ is a $u{,}v$-separator.
A~$u{,}v$-separator $S$ is minimal if and only if the two components of $G-S$ containing $u$ and $v$ are \emph{$S$-full}, that is, these components dominate $S$.
A~\emph{minimal separator} is a minimal $u{,}v$-separator for two non-adjacent vertices $u$ and~$v$.

Given two graphs $G$ and $H$, we say that $G$ is \emph{$H$-free} if no induced subgraph of $G$ is isomorphic to $H$.
A \emph{star} is a connected graph where all edges are incident to one vertex, and the \emph{claw} is the star on $4$ vertices.
We denote by $3K_1$ the edgeless graph with three vertices, and, for $k\geq 3$, by $C_k$ the cycle of length $k$.
A graph is \emph{chordal} if it does not contain any induced cycle of length greater than $3$.

Besides the class of chordal graphs, several other hereditary graph classes play an important role in our study.
A graph $G$ is \emph{cobipartite} if its complement is bipartite, that is, if the vertex set of $G$ can be partitioned into two cliques.
Furthermore, $G$ is said to be a \emph{cochain graph} if we can write $V(G) = X\cup Y$ where $X$ and $Y$ are disjoint cliques with $X= \{x_1,\ldots, x_k\}$ such that $N[x_i]\subseteq N[x_j]$ for all $1\le i<j\le k$.
A graph $G$ is an \emph{interval graph} if it has an \emph{interval representation}, that is, if its vertices can be put in a one-to-one correspondence with a family of closed intervals on the real line such that two distinct vertices are adjacent if and only if the corresponding intervals intersect.
If $G$ has an interval representation in which no interval contains another interval, then $G$ is said to be a \emph{proper interval graph}.
A vertex set $A$ in a graph $G$ is an \emph{asteroidal set} if for each $a\in A$, the vertices in $A\setminus \{a\}$ are all contained in a single connected component of $G-N[a]$~\cite{MR505894}. 
Asteroidal sets of cardinality three are called \emph{asteroidal triples}, and graphs not containing any asteroidal triples are called \emph{AT-free}. 
A prominent subclass of AT-free graphs is the class of \emph{cocomparability graphs}, which are graphs whose complements allow for a transitive orientation of their edges.

Roberts proved that proper interval graphs are exactly the claw-free interval graphs~\cite{MR0252267} (see also~\cite{MR2364171,MR1687858}).
This result, together with the characterisation of interval graphs due to Lekkerkerker and Boland stating that interval graphs are exactly the AT-free chordal graphs~\cite{MR139159}, implies the following result.

\begin{theorem}\label{thm:PIGs}
A graph $G$ is a proper interval graph if and only if $G$ is a claw-free AT-free chordal graph.
\end{theorem}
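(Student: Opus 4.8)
The plan is to derive the claimed equivalence by chaining together the two characterisations quoted immediately above \emph{Theorem~\ref{thm:PIGs}}, treating the statement as a purely logical combination of known results rather than establishing anything new from scratch. First I would invoke Roberts' theorem, namely that $G$ is a proper interval graph if and only if $G$ is a claw-free interval graph. This already isolates the two conditions we want to expose---being claw-free and being an interval graph---and reduces the remaining task to rewriting the property ``interval graph'' in the desired form.

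Next I would substitute the Lekkerkerker--Boland characterisation, which asserts that $G$ is an interval graph if and only if $G$ is an AT-free chordal graph. Plugging this into Roberts' equivalence, the compound condition ``claw-free interval graph'' becomes ``claw-free and AT-free and chordal''. Reading the chain of equivalences in both directions then yields exactly that $G$ is a proper interval graph if and only if $G$ is a claw-free AT-free chordal graph, which is the assertion of the theorem.

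Since each direction follows by a single substitution, I do not anticipate any genuine obstacle; the only point meriting a moment's care is that the conjunction of the three stated properties coincides precisely with the intersection of the conditions ``claw-free'' and ``interval graph'' once the latter is expanded via Lekkerkerker--Boland, so that no ordering of the hypotheses and no hidden dependence between them is introduced. As each of these three properties is hereditary and they combine freely, the statement is immediate from the two cited results.
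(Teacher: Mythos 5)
Your proposal is correct and follows exactly the route the paper takes: the theorem is obtained by combining Roberts' characterisation of proper interval graphs as claw-free interval graphs with the Lekkerkerker--Boland characterisation of interval graphs as AT-free chordal graphs. No further argument is needed, and none is given in the paper either.
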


Let us introduce some notation for orderings of the vertex set of a graph.
A \emph{(vertex) ordering} of a graph $G$ is a total order on its vertex set. Given an ordering $\sigma$ of a graph $G$ and two distinct vertices $x$ and $y$, we write $x <_{\sigma} y$ if $x$ precedes $y$ in $\sigma$.
Three vertices $x <_{\sigma} y <_{\sigma} z$ of $G$ with $xy \notin \E{G}$, $yz \notin \E{G}$, and $xz \in \E{G}$ are said to form an \emph{umbrella} in $\sigma$.
An ordering is \emph{umbrella-free} if no three vertices form an umbrella in it.
As shown by Kratsch and Stewart~\cite{kratsch1993domination}, the existence of an umbrella-free ordering characterises cocomparability graphs.

\begin{theorem}[Kratsch and Stewart~\cite{kratsch1993domination}]\label{thm:umbrella}
A graph $G$ is a cocomparability graph if and only if it has an umbrella-free ordering.
\end{theorem}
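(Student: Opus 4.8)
The plan is to unwind the definition of a cocomparability graph and show that an umbrella is exactly the local obstruction to a transitive orientation of the complement. Recall that $G$ is a cocomparability graph precisely when its complement $\overline{G}$ admits a transitive orientation; equivalently, there is a partial order $P$ on $\V{G}$ whose comparable pairs are exactly the non-edges of $G$ (so $uv\in\E{G}$ if and only if $u$ and $v$ are incomparable in $P$). I would prove both implications by directly relating such an order to an umbrella-free ordering.

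For the forward direction, suppose $G$ is a cocomparability graph, fix a partial order $P$ as above, and let $\sigma$ be any linear extension of $P$. I claim $\sigma$ is umbrella-free. Indeed, if $x <_\sigma y <_\sigma z$ were an umbrella, then $xy\notin\E{G}$ and $yz\notin\E{G}$ would mean that $x,y$ and $y,z$ are comparable in $P$; since $\sigma$ extends $P$, this forces $x <_P y <_P z$, and transitivity of $P$ gives $x <_P z$. But then $x$ and $z$ are comparable in $P$, so $xz\notin\E{G}$, contradicting the third umbrella condition $xz\in\E{G}$.

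For the converse, suppose $G$ has an umbrella-free ordering $\sigma$. I would orient every non-edge of $G$ (that is, every edge of $\overline{G}$) by sending $u$ to $v$ whenever $uv\notin\E{G}$ and $u <_\sigma v$, and then verify that this orientation of $\overline{G}$ is transitive. So assume $u\to v$ and $v\to w$ are oriented edges; by definition $uv\notin\E{G}$, $vw\notin\E{G}$, and $u <_\sigma v <_\sigma w$. If $uw\in\E{G}$ held, then $u,v,w$ would form an umbrella in $\sigma$, contradicting the hypothesis; hence $uw\notin\E{G}$, and since $u <_\sigma w$ the edge is oriented $u\to w$, as required for transitivity. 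Thus $\overline{G}$ admits a transitive orientation, and $G$ is a cocomparability graph.

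The argument is essentially a dictionary translation, so I do not expect a serious obstacle; the one point requiring care is keeping the two reformulations of \emph{cocomparability} aligned---namely that a transitive orientation of $\overline{G}$ is the same data as a partial order $P$ on $\V{G}$ whose comparabilities are the non-edges of $G$---and checking that a linear extension of $P$ orients comparable pairs consistently with $\sigma$. The conceptual heart of the proof, in both directions, is the observation that an umbrella $x <_\sigma y <_\sigma z$ is precisely a violation of transitivity for the orientation of $\overline{G}$ induced by $\sigma$.
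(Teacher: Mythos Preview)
Your proof is correct and is the standard argument for this classical characterisation. Note, however, that the paper does not provide its own proof of this statement: it is quoted as a known result due to Kratsch and Stewart and used as a black box, so there is no proof in the paper to compare against.
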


For further background on graph classes, we refer to~\cite{MR1686154}. 

A vertex is \emph{simplicial} if its neighbourhood forms a clique.
A vertex set $M$ is a \emph{module} if each vertex $v\in V(G)\setminus M$ is either adjacent to every vertex in $M$ or not adjacent to any vertex in $M$. A \emph{clique module} is a module that is a clique.
A \emph{simplicial module} is an inclusion-maximal clique module containing a simplicial vertex.
Note that all vertices in a simplicial module are simplicial.

Let $v$ be a vertex in a graph $G$.
An \emph{extension} of $v$ in $G$ is an induced $P_3$ in $G$ having $v$ as midpoint.
A vertex $v$ is \emph{avoidable} in $G$ if every extension of $v$ is contained in an induced cycle.
Note that every simplicial vertex is avoidable, and a vertex in a chordal graph is avoidable if and only if it is simplicial.
The concept of avoidable vertices goes back to the work of Ohtsuki, Cheung, and Fujisawa~\cite{ohtsuki1976minimal}, who proved that every graph has an avoidable vertex.
Vertices having this property were later dubbed \emph{OCF-vertices}~\cite{MR2057267,MR2204109,MR2191642,yang2014ordering,BerryBBS10}, and very recently the notion has reemerged under the name of avoidable vertices~\cite{MR4357319,bonamy2020avoidable}.

We conclude our preliminaries with a formal definition of \emph{moplexes}, another strengthening of avoidable vertices, which play a central role in this paper. 
For purely technical reasons we use the definition from~\cite{meister2005recognition}, which extends the one in~\cite{MR1626534} so that the vertex set of any complete graph is also a moplex (see also~\cite{MR2816655,xu2013moplex}).
An illustration of the notions of moplexes and avoidable vertices is provided in \cref{fig:moplex}.

\begin{definition}%
A \emph{moplex} in a graph $G$ is an inclusion-maximal clique module $X \subseteq \V{G}$ 
such that $N(X)$ is either empty or
a minimal separator in $G$.
A moplex $X$ is \emph{simplicial} if $N(X)$ is a clique.
A vertex is \emph{moplicial} if it belongs to a moplex.
\end{definition}

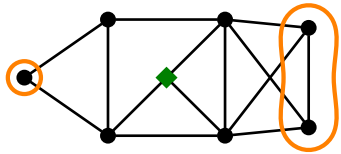
\begin{figure}[thb]
        \centering
        \begin{tikzpicture}[scale=1.1]
        \tikzstyle{vertex} = [draw, circle, scale=0.5, thick,fill]
        \node[vertex,green!50!black,diamond] (Center) at (0,0) {};
        \node[vertex] (A) at ($(Center)+(-0.7,0.7)$) {};
        \node[vertex] (B) at ($(Center)+(0.7,0.7)$) {};
        \node[vertex] (C) at ($(Center)+(0.7,-0.7)$) {};
        \node[vertex] (D) at ($(Center)+(-0.7,-0.7)$) {};
        
        \node[vertex] (B2) at ($(B)+(1,-0.1)$) {};
        \node[vertex] (C2) at ($(C)+(1,0.1)$) {};
        
        \node[vertex] (A2) at ($(Center)+(-1.7,0)$) {};
        
        \draw[line width=1pt] (A) -- (B);
        \draw[line width=1pt] (B) -- (C);
        \draw[line width=1pt] (C) -- (D);
        \draw[line width=1pt] (D) -- (A);
        \draw[line width=1pt] (Center) -- (B);
        \draw[line width=1pt] (Center) -- (C);
        \draw[line width=1pt] (Center) -- (D);
        \draw[line width=1pt] (A) -- (A2);
        \draw[line width=1pt] (A2) -- (D);
        
        \draw[line width=1pt] (B) -- (B2);
        \draw[line width=1pt] (B) -- (C2);
        \draw[line width=1pt] (C) -- (B2);
        \draw[line width=1pt] (C) -- (C2);
        \draw[line width=1pt] (B2) -- (C2);
        
        \node[draw, circle, scale=1.2,ultra thick,orange] at ($(A2)$) {};
        \draw[ultra thick,orange] ($(B2)+(-0.2,0.2)$) to[closed=true,curve through={($(Center)+(1.4,0)$) .. ($(C2)+(-0.2,-0.2)$) .. ($(C2)+(0.2,-0.2)$) .. ($(Center)+(2,0)$)}] ($(B2)+(0.2,0.2)$);
    \end{tikzpicture}
    \caption{A graph with exactly two moplexes (circled in orange) containing an avoidable vertex (the green diamond) that is neither moplicial nor simplicial.}
    \label{fig:moplex}
\end{figure}

Note that if $X$ is a moplex in a graph $G$, then the graph $G-N[X]$ contains an $N(X)$-full component.
Using such a component, it can be shown that every extension of a vertex $v\in X$ is contained in an induced cycle in $G$.
This leads to the following observation (as noted already in~\cite{berry2005extremities}, and perhaps earlier).

\begin{observation}\label{moplicial implies avoidable}
Every moplicial vertex in a graph is avoidable.
\end{observation}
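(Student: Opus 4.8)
The plan is to fix a moplex $X$ containing the given moplicial vertex $v$ and to show directly that \emph{every} extension of $v$ lies on an induced cycle, following the hint that $G-N[X]$ contains an $N(X)$-full component. First I would dispose of the degenerate case $N(X)=\emptyset$: then $X$ has no edges leaving it, so the clique $X$ is a complete connected component, every neighbour of $v$ lies in $X$, and $v$ admits no extension at all and is vacuously avoidable. Hence I may assume $N(X)$ is non-empty, and thus a minimal separator.

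Next I fix an arbitrary extension of $v$, i.e.\ an induced $P_3$ given by $av,vb\in\E{G}$ and $ab\notin\E{G}$. The first key step is to pin down where $a$ and $b$ live. Since $X$ is a clique module, the neighbourhood of $v$ is exactly $(X\setminus\{v\})\cup N(X)$, and every vertex of $N(X)$ is adjacent to all of $X$. Consequently, if one of $a,b$ belonged to $X\setminus\{v\}$ it would be adjacent to every other neighbour of $v$, contradicting $ab\notin\E{G}$. Therefore both $a$ and $b$ lie in $N(X)$.

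The second key step exploits the minimal-separator property to produce a second full component. As $X$ is a connected clique whose only neighbours outside $X$ form $N(X)$, the set $X$ is itself an $N(X)$-full component of $G-N(X)$. Because $N(X)$ is a minimal separator, $G-N(X)$ has at least two $N(X)$-full components, so there is one, say $C$, distinct from $X$; this $C$ is a component of $G-N[X]$ and dominates $N(X)$. In particular both $a$ and $b$ have a neighbour in the connected set $C$, so I would take $P$ to be a shortest path from $a$ to $b$ all of whose internal vertices lie in $C$. Such a path exists, has at least one internal vertex (since $ab\notin\E{G}$), and, being shortest, is induced.

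Finally I close $P$ up through $v$ to obtain the cycle $v,a,\dots,b,v$ and argue it is induced. This chord analysis is where the real work lies and is the main obstacle. Chords among the internal vertices, or between an endpoint $a,b$ and a non-adjacent internal vertex, are excluded because $P$ is an induced shortest path; the pair $\{a,b\}$ is handled by $ab\notin\E{G}$; and, crucially, $v$ has no chord to any internal vertex, because those vertices lie in $C\subseteq \V{G}\setminus N[X]$ whereas all neighbours of $v$ lie in $N[X]$. This produces an induced cycle containing the prescribed extension, and since the extension was arbitrary, $v$ is avoidable.
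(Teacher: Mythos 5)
Your proof is correct and follows exactly the route the paper sketches for this observation: it takes the $N(X)$-full component of $G-N[X]$ guaranteed by the minimal-separator condition, routes an induced path between the two endpoints of the extension through that component, and closes it into an induced cycle via $v$. The paper only states this as a hint without details, and your write-up supplies precisely the missing verifications (locating $a,b$ in $N(X)$, existence of the second full component, and the chord analysis), so there is nothing to add.
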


We use this observation frequently in the paper (without explicit reference).

\section{Structural properties}\label{sec:inclusions}

In this section, we focus on establishing structural properties of $k$-moplex graphs for a fixed $k$, with a special focus on the smallest nontrivial graph class defined in this way, namely the class of $2$-moplex graphs.
We begin by recalling a result on minimal separators and moplexes.

\begin{theorem}[Berry and Bordat~\cite{berry2001asteroidal}]\label{moplex in each CC}
For every minimal separator $S$ in a graph $G$, 
each component of $G-S$ contains a moplex in~$G$.
\end{theorem}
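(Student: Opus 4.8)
The plan is to reduce to the case of a full component and then apply the Berry--Bordat theorem that every non-complete graph has two non-adjacent moplexes to a suitably saturated ``block'' graph. First I would observe that we may replace $S$ by $S' := N(C)$. Indeed, $C$ dominates $S'$ by definition, and any full component $C_1$ of $G-S$ distinct from $C$ (such a component exists because a minimal separator has at least two full components) dominates $S \supseteq S'$ and is separated from $C$ in $G-S'$ (as $C$ is its own component of $G-S'$); hence $S'$ is itself a minimal separator of $G$ and $C$ is a \emph{full} component of $G-S'$. If $S'=\emptyset$, then $C$ is a connected component of $G$ and any moplex of $G[C]$ -- which exists by the Berry--Bordat theorem, or is $C$ itself if $C$ is complete -- is a moplex of $G$; so I assume $S'\neq\emptyset$.

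Next I would form the block $B := G(S',C)$ on vertex set $C\cup S'$, obtained from $G[C\cup S']$ by turning $S'$ into a clique; since $C$ is connected and dominates $S'$, the graph $B$ is connected. If $B$ is complete, then $C$ is a clique and a module of $G$ with $N_G(C)=S'$, and $C$ is even a maximal clique module: no $s\in S'$ can be added, because $s$ has a neighbour in the full component $C_1$ while no vertex of $C$ does, so $C\cup\{s\}$ is not a module. Thus $C$ is a moplex of $G$ and we are done. Otherwise $B$ is non-complete, so by the Berry--Bordat theorem it has two non-adjacent moplexes; since $S'$ is a clique, at most one of them can meet $S'$, so at least one moplex $M$ satisfies $M\subseteq C$.

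It then remains to verify that such an $M$ is a moplex of $G$. That $M$ is a clique and a module of $G$, and that $N_G(M)=N_B(M)=:T$ (the saturation only adds edges inside $S'$, which do not touch $M\subseteq C$), follows directly from the construction, and maximality of $M$ in $G$ uses the same ``neighbour in $C_1$'' argument as above. The crux, and the step I expect to be the main obstacle, is showing that $T$ is a minimal separator of $G$. Here I would use the characterisation that it suffices for $G-N[M]$ to contain a $T$-full component. Note first that $M$ is itself a full component of $G-T$: every neighbour of $M$ lies in $T$, and $M$ is a clique, so after deleting $T$ it forms one component, which dominates $T$ since $M$ is a module. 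It therefore suffices to produce a \emph{second} full component of $G-T$.

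To find it, I would trace the $T$-full component $D$ of $B-N_B[M]$ (which exists because $M$ is a moplex of $B$) back into $G$, distinguishing two cases. If $D\subseteq C$, then $D$ uses no saturated edges, so it is connected and dominates $T$ in $G$ as well; the component of $G-T$ containing $D$ is then a full component distinct from $M$. If instead $D$ meets $S'$, a short connectivity analysis places all of $D$ inside the component $W$ of $G-N[M]$ containing $C_1$: every $S'$-vertex of $D$ lies in $S'\setminus T$ and is dominated by $C_1$, while no piece of $D$ contained in $C$ could be split off, as it could connect to the rest of $D$ neither by a genuine edge (it would then not be a separate piece) nor by a saturated edge (it has no $S'$-vertex). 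Then $W$ dominates $T$ -- its vertices in $D$ dominate $T\cap C$ through genuine edges, and $C_1$ dominates $T\cap S'$ -- and $W$ is a component of $G-T$ distinct from $M$, since $M$ is isolated there. In either case $T$ is a minimal separator, so $M$ is a moplex of $G$ contained in $C$. The delicate point throughout this last step is the bookkeeping that separates saturated from genuine edges so that the domination witnesses transfer from $B$ to $G$.
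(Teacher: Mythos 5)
Your proof is correct, but it takes a genuinely different route from the paper. The paper reduces to the chordal case: it invokes a result of Parra and Scheffler guaranteeing a minimal triangulation $H$ of $G$ that preserves the components of $G-S$, applies the chordal version of the statement to $H$, and then transfers the moplexes back via the lemma that every moplex of a minimal triangulation of $G$ is a moplex of $G$. You instead localise to the saturated full block $B=G(S',C)$ with $S'=N(C)$, apply the basic Berry--Bordat existence theorem (two non-adjacent moplexes in any non-complete graph) to $B$, observe that at least one of these moplexes must avoid the clique $S'$ and hence lie in $C$, and then transfer it back to $G$ by hand. The trade-off is clear: the paper's argument is essentially three citations glued together, with the minimal-triangulation machinery absorbing all the work, whereas your argument only needs the existence theorem but pays for it with the careful bookkeeping of saturated versus genuine edges when verifying that $N_G(M)$ is a minimal separator of $G$ (your case analysis on whether the second full component $D$ of $B-N_B[M]$ meets $S'$, and the observation that every $G$-component of $D$ must meet $S'$ in the latter case, is exactly the delicate step, and it goes through). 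It is worth noting that the block $G(S',C)$ is itself the basic object of the minimal triangulation theory the paper cites, so the two proofs are close relatives; yours is the more self-contained of the two, the paper's the shorter.
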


In fact, the same authors previously showed that this result holds under the assumption that the graph $G$ is chordal.

\begin{theorem}[Berry and Bordat~\cite{MR1626534}]\label{chordal moplex in each cc}
    Let $S$ be a minimal separator in a chordal graph $H$.
    Then each connected component of $H-S$ contains at least one moplex.
\end{theorem}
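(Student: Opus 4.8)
The plan is to prove \Cref{chordal moplex in each cc} by reducing the problem to the existence of simplicial vertices inside each component, exploiting the fact that in a chordal graph avoidable and simplicial vertices coincide and, more importantly, that simplicial modules and moplexes are tightly linked. First I would recall the key feature of a minimal separator $S$ in a chordal graph: $S$ is a clique. This is the classical observation (due to Dirac) that minimal separators in chordal graphs induce complete subgraphs, and it is the structural fact that makes the chordal case cleaner than the general case handled in \Cref{moplex in each CC}. Fixing a component $C$ of $H-S$, I would then argue that $C$ is $S$-full, i.e.\ every vertex of $S$ has a neighbour in $C$; if $C$ is the component containing one of the two non-adjacent vertices defining $S$, this follows from the minimality of the separator via the $S$-full characterisation stated in the preliminaries.

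The main step is to locate a moplex inside $C$. My approach is to consider the induced subgraph $H' = H[C \cup S]$, which is again chordal (chordality is hereditary), and to find a suitable simplicial vertex of $H'$ lying in $C$. Concretely, since $H'$ is chordal, by Dirac's theorem it has simplicial vertices; the subtlety is that a priori these could all lie in $S$. To rule this out I would use the fact that $S$ is a clique together with $S$-fullness: because $C$ dominates $S$ and $C$ is nonempty, I can peel off a simplicial vertex of $H'$ that is forced into $C$. A clean way to do this is to take a simplicial vertex $v$ of $H'$ whose closed neighbourhood is inclusion-minimal, or alternatively to invoke the stronger form of Dirac's theorem guaranteeing \emph{two} non-adjacent simplicial vertices when $H'$ is not complete: since all of $S$ forms a clique, at most one of these two non-adjacent simplicial vertices can lie in $S$, so the other lies in $C$.

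From a simplicial vertex $v \in C$ of $H'$ I would then build a moplex of the \emph{whole} graph $H$ contained in $C$. The clique module of $v$ within $H'$ is a candidate, but one must verify the three defining conditions of a moplex in $H$ rather than in $H'$: maximality as a clique module, and that the neighbourhood is either empty or a minimal separator of $H$. The key observation making this work is that for a vertex $v$ deep inside $C$ (all of whose neighbours lie in $C \cup S$), its neighbourhood structure in $H$ agrees with that in $H'$, so the moplex of $H'$ containing $v$ is also a moplex of $H$; here I would lean on the characterisation that $N(X)$ being a clique separating $X$ from an $N(X)$-full component makes $N(X)$ a minimal separator in the chordal graph.

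The hard part, and the step I would scrutinise most carefully, is precisely this transfer from a moplex (or simplicial module) of the induced subgraph $H' = H[C\cup S]$ to a genuine moplex of $H$: one has to ensure that adding back the rest of $H$ (the vertices in components of $H-S$ other than $C$, together with $S$'s external neighbours) neither enlarges the clique module nor destroys the minimal-separator property of its neighbourhood. I expect this to be controlled by the fact that $S$ is a clique and that vertices outside $C\cup S$ are separated from $v$ by $S$, so they cannot be adjacent to $v$ and cannot join $v$'s module; the minimal-separator condition should then follow from $C$ (or a sub-component of it) providing the required $S$-full witness on one side.
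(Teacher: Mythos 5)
First, note that the paper does not actually prove \cref{chordal moplex in each cc}: it is quoted from Berry and Bordat and used as a black box (together with \cref{min sep chordal} and \cref{moplex in H is moplex in G}) to derive the general \cref{moplex in each CC}. So your proposal must be judged on its own merits, and it has a genuine gap at exactly the step you flag as the hard one: passing from a simplicial vertex $v$ of $H'=H[C\cup S]$ lying in $C$ to a moplex of $H$ contained in $C$. In a chordal graph a simplicial vertex need not be moplicial, and its maximal clique module (a simplicial module) need not be a moplex, so ``the moplex of $H'$ containing $v$'' that you intend to transfer may simply not exist. Concretely, let $H$ be the bull: a triangle $v s_1 s_2$ with pendant vertices $w_1$ adjacent to $s_1$ and $w_2$ adjacent to $s_2$. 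Take the minimal separator $S=\{s_1\}$ and the component $C=\{v,s_2,w_2\}$ of $H-S$. The vertex $v$ is simplicial in $H$ and in $H'=H-w_1$, its maximal clique module in $H$ is $\{v\}$, and $N(\{v\})=\{s_1,s_2\}$ is \emph{not} a minimal separator of $H$, because $\{v\}$ is the only $\{s_1,s_2\}$-full component of $H-\{s_1,s_2\}$; hence $\{v\}$ is not a moplex (the moplex inside $C$ is $\{w_2\}$). None of your selection rules excludes $v$: Dirac's theorem applied to $H'$ may return the non-adjacent simplicial pair $v,w_2$ and hand you $v$, and $N[v]$ is inclusion-minimal since $N[v]$ and $N[w_2]$ are incomparable. (Working inside $H'$ is even worse here, since the maximal clique module of $v$ in $H'$ is $\{v,s_1\}$, which is not contained in $C$.) This is not a fixable detail but the essential content of the theorem: the statement is strictly stronger than Dirac's, and any proof must select the simplicial vertex by a stronger extremality property --- for instance one whose maximal clique corresponds to a leaf of a clique tree whose subtree lies on the $C$-side of $S$, or an induction that peels off whole simplicial modules in a suitable order --- which a bare appeal to the existence of two non-adjacent simplicial vertices cannot supply.

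A secondary issue is your assertion that an arbitrary component $C$ of $H-S$ is $S$-full. Only the (at least two) full components are guaranteed to dominate $S$, whereas the theorem claims a moplex in \emph{every} component. This part is repairable: for a non-full component one replaces $S$ by $S'=N(V(C))$, which is again a minimal separator of $H$ whenever it is non-empty (take any $S$-full component $D$ of $H-S$; the component of $H-S'$ containing $D$ is $S'$-full and distinct from $C$), and if $S'=\emptyset$ one still needs the separate fact that every connected chordal graph contains a moplex. But as written your argument does not cover all components, and in any case the repair does not address the main gap above.
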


\Cref{moplex in each CC} is stated in~\cite{berry2001asteroidal} without a proof. 
For the sake of completeness, we give a short proof here. To do so, we need two more results from the literature. 
We call a chordal graph $G'$ a \emph{minimal triangulation} of a graph $G$ if $\V{G}=\V{G'}$, $\E{G}\subseteq \E{G'}$, and for all $F\subsetneq E(G')\setminus E(G)$, the graph $(\V{G},\E{G}\setminus F)$ is not chordal.

\begin{lemma}[Berry and Bordat~\cite{MR1626534}]\label{moplex in H is moplex in G}
Let $H$ be a minimal triangulation of a graph $G$ and $U$ be a moplex of $H$.
Then $U$ is a moplex of $G$.
\end{lemma}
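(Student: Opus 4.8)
The plan is to prove \cref{moplex in H is moplex in G}: given a minimal triangulation $H$ of $G$ and a moplex $U$ of $H$, to show that $U$ is also a moplex of $G$. I need to verify three things for $U$ in $G$: that it is a clique module, that it is inclusion-maximal as such, and that $N_G(U)$ is either empty or a minimal separator of $G$. The central difficulty is that passing from $H$ to $G$ removes edges (those in $E(H)\setminus E(G)$), so a priori $U$ might stop being a clique module in $G$, or its neighbourhood might change. Controlling these edge deletions is where the structure of \emph{minimal} triangulations must be used.

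First I would invoke the key property of minimal triangulations via minimal separators. The classical characterisation (due to Kottler--Parra--Scheffler / Berry et al.) states that if $H$ is a minimal triangulation of $G$, then every edge of $H$ that is not an edge of $G$, i.e.\ every fill edge, is ``covered'' by a minimal separator: more precisely, the minimal separators of $H$ are exactly the minimal separators of $G$ that are still minimal separators after triangulation, and crucially every clique $N_H(C)$ arising as the neighbourhood of a connected component behaves the same in $G$ as in $H$. The cleanest route uses the fact that for a minimal triangulation, a set $S$ is a minimal separator of $H$ if and only if it is a minimal separator of $G$, and fill edges only occur inside such separators. I would therefore argue that $N_H(U)$, being a minimal separator of $H$ (from the moplex definition), is a minimal separator of $G$, and that no edge of $G$ is lost between $U$ and the rest, nor inside $U$.

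Concretely, the steps I would carry out are: (1) Show $N_G(U)=N_H(U)$. Since $E(G)\subseteq E(H)$, we have $N_G(U)\subseteq N_H(U)$; for the reverse, use that the fill edges incident to $U$ would have to lie inside the minimal separator $N_H(U)$ (which is a clique in $H$), so no edge between $U$ and $N_H(U)$ is a fill edge — hence every neighbour of $U$ in $H$ is already a neighbour in $G$. (2) Show $U$ is still a clique in $G$: I claim no fill edge lies \emph{inside} $U$. This follows because $U$ is a moplex, so $G-N[U]$ has an $N_H(U)$-full component on the ``far side''; any fill edge within $U$ would be avoidable when rebuilding the triangulation, contradicting minimality of $H$. (3) Show $U$ is a module in $G$: this is inherited, since $N_G(U)=N_H(U)$ and $U$ is a module in $H$ means every outside vertex is adjacent to all or none of $U$ in $H$; combined with step (1) the same holds in $G$. (4) Show $N_G(U)$ is a minimal separator of $G$ (or empty): use that $N_H(U)$ is a minimal separator of $H$, and the correspondence of minimal separators under minimal triangulation, noting the $N(U)$-full components persist. (5) Show inclusion-maximality of $U$ as a clique module in $G$: any larger clique module in $G$ would, upon triangulating, contain $U$ and remain a clique module in $H$, contradicting maximality of $U$ in $H$.

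The main obstacle I anticipate is step (2) together with the precise statement linking fill edges to minimal separators. Asserting that fill edges lie only inside minimal separators of $H$, and that $N_H(U)$ being a clique forces all edges between $U$ and $N_H(U)$ to be original, requires the standard but nontrivial theory of minimal triangulations (the result that $H$ is obtained from $G$ by saturating a maximal set of pairwise parallel minimal separators, and that each fill edge lies within some minimal separator). I would lean on this characterisation as a cited black box, and the remaining bookkeeping — that $U$ being a moplex guarantees an $N(U)$-full component on both sides, so that $N_H(U)$ is genuinely a minimal separator and survives in $G$ — is routine once that black box is in place.
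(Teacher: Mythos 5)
The paper does not prove this lemma: it is imported verbatim as a cited result of Berry and Bordat, so there is no in-paper proof to compare against. Judged on its own terms, your skeleton is the standard one -- everything is meant to follow from the Parra--Scheffler characterisation of minimal triangulations (the fill edges $E(H)\setminus E(G)$ are exactly the edges added by saturating a maximal set of pairwise parallel minimal separators of $G$, and these separators are precisely the minimal separators of $H$). But two of your steps do not hold up as written. For step (2) (and in fact also step (1)), the phrase ``any fill edge within $U$ would be avoidable when rebuilding the triangulation'' is not an argument, and your step-(1) reasoning is garbled: a fill edge incident to $U$ cannot ``lie inside the minimal separator $N_H(U)$'', since one of its endpoints is in $U$ and hence outside $N_H(U)$. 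The clean way to kill all fill edges touching $U$ at once is: $N_H(U)$ is a minimal separator of the chordal graph $H$, hence a clique, so $N_H[U]$ is a clique and every vertex of $U$ is simplicial in $H$; a simplicial vertex lies in no minimal separator (it would need neighbours in two distinct full components, which are non-adjacent); and every fill edge has both endpoints in a minimal separator of $H$. Hence no fill edge is incident to any vertex of $U$, which simultaneously gives $N_G(U)=N_H(U)$, that $U$ is a clique in $G$, and that $U$ is a module in $G$.

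The genuine gap is step (5). You assert that a clique module $U'$ of $G$ with $U\subsetneq U'$ ``would, upon triangulating, \ldots remain a clique module in $H$''. That is false in general: an outside vertex that is non-adjacent to all of $U'$ in $G$ can acquire fill edges to some but not all vertices of $U'$ (it can never gain a fill edge to $U$ itself, by the simpliciality argument above, but it can gain one to a vertex of $U'\cap N_H(U)$), destroying the module property in $H$. So maximality cannot be delegated back to $H$. Argue it directly instead: any $w\notin U$ with $N_G[w]=N_G[u]$ for $u\in U$ must lie in $S:=N_G(U)=N_H(U)$; since $S$ is a minimal separator of $G$ (it is one of $H$, and minimal separators of a minimal triangulation are minimal separators of $G$), the graph $G-S$ has an $S$-full component $C$ distinct from the component $U$, so $w$ has a neighbour in $C$, which does not belong to $N_G[u]=U\cup S$ -- a contradiction. (When $N_H(U)=\emptyset$ maximality is immediate.) With these two repairs the proof is complete.
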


The next theorem is an immediate consequence of \cite[Theorem 4.6]{MR1478250}.

\begin{theorem}\label{min sep chordal}
    Let $S$ be a minimal separator in a graph $G$.
    Then there exists a minimal triangulation $H$ of $G$ such that
    the vertex sets of the connected components of $H -S$ are the same as the vertex sets of the connected components of $G - S$.
\end{theorem}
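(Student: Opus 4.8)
The plan is to derive the statement from the characterisation of minimal triangulations in terms of maximal sets of pairwise non-crossing minimal separators, which is the content of \cite{MR1478250}. Recall that two minimal separators $S$ and $T$ of $G$ are said to \emph{cross} if $T$ meets at least two distinct connected components of $G-S$ (a relation which turns out to be symmetric for minimal separators), and are \emph{parallel} otherwise; a set of minimal separators is \emph{parallel} if its members are pairwise parallel. The key external fact I would invoke is that the graph obtained from $G$ by \emph{saturating} (turning into a clique) every separator in a parallel set $\mathcal{S}$ is chordal, and that it is a \emph{minimal} triangulation of $G$ precisely when $\mathcal{S}$ is an inclusion-maximal parallel set of minimal separators.

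First I would extend the single separator $S$ to a maximal parallel set. Since $\{S\}$ is trivially a parallel set and the family of all minimal separators of $G$ is finite, a greedy argument---repeatedly adjoining any minimal separator parallel to all already chosen ones---produces an inclusion-maximal parallel set $\mathcal{S}$ of minimal separators with $S\in\mathcal{S}$. Let $H$ be the graph obtained from $G$ by making each member of $\mathcal{S}$ a clique; by the cited characterisation $H$ is a minimal triangulation of $G$, so in particular $\V{H}=\V{G}$ and $\E{G}\subseteq\E{H}$. It then remains only to compare the connected components of $H-S$ with those of $G-S$.

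For one direction, since $\E{G}\subseteq\E{H}$, every path of $G-S$ survives in $H-S$, so passing from $G$ to $H$ can only merge components; hence each component of $G-S$ is contained in a single component of $H-S$. For the reverse direction I need to show that the added fill edges do not join two distinct components of $G-S$ once $S$ is deleted. Every fill edge of $H$ lies inside some saturated separator $T\in\mathcal{S}$, and because $T$ is parallel to $S$, the set $T\setminus S$ is contained in a single component of $G-S$. Consequently any fill edge $uv$ with $u,v\notin S$ has both endpoints in one and the same component of $G-S$, while fill edges having an endpoint in $S$ disappear in $H-S$. Thus $H-S$ contains no edge between distinct components of $G-S$, and the two partitions of $\V{G}\setminus S$ coincide.

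The only genuinely delicate point is this second inclusion, and it hinges entirely on the definition of non-crossing: I expect the crux to be the observation that parallelism of each $T\in\mathcal{S}$ with the distinguished separator $S$ forces $T\setminus S$ into one component of $G-S$, which is exactly what prevents the saturation step from gluing components together. Everything else---extending $\{S\}$ to a maximal parallel set and the easy merging direction---is routine, and the heavy lifting, namely that maximal parallel sets of minimal separators correspond to minimal triangulations and that saturation along a parallel set produces a chordal graph, is imported wholesale from \cite{MR1478250}.
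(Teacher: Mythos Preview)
Your proposal is correct and follows precisely the route the paper indicates: the paper does not give its own argument but simply states that the theorem ``is an immediate consequence of \cite[Theorem~4.6]{MR1478250}'', which is exactly the Parra--Scheffler characterisation of minimal triangulations via maximal parallel sets of minimal separators that you invoke. Your write-up therefore spells out in detail what the paper leaves implicit, and the key observation you identify---that parallelism of each $T\in\mathcal{S}$ with $S$ confines $T\setminus S$ to a single component of $G-S$, so saturation cannot merge components---is indeed the only nontrivial step.
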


We are now ready to prove \cref{moplex in each CC}.

\renewcommand*{\proofname}{Proof of \cref{moplex in each CC}.}
\begin{proof}
    Let $S$ be a minimal separator in a graph $G$.
    According to \cref{min sep chordal}, there exists a minimal triangulation $H$ of $G$ such that, for each connected component $C_i$ in $G-S$ there exists a connected component $C_i'$ in $H$ with $V(C_i) = V(C_i')$.
    This readily implies that $S$ is a minimal separator in $H$.
    Then, \cref{chordal moplex in each cc} guarantees that there exists a moplex in each connected component of $H - S$ and \cref{moplex in H is moplex in G} implies that these moplexes are moplexes in $G$ as well.
\end{proof}
\renewcommand*{\proofname}{Proof}

With \cref{moplex in each CC} at hand, we can already obtain some preliminary results on graphs with at most two moplexes.
These will be useful in \cref{sub:conwithcocomp}.

\begin{lemma}\label{M2 moplexes properties}
    Let $G$ be a non-complete $2$-moplex graph and denote by $U$ and $W$ its two moplexes.
    Then the following holds:
    \begin{enumerate}
        \item\label[property]{U and W are disjoint simplicial moplexes} $U$ and $W$ are disjoint simplicial moplexes, and
        \item\label[property]{G-S two cc} for every minimal separator $S$ in $G$, the graph $G -S$ contains exactly two connected components, one of which contains $U$ and the other one $W$.
    \end{enumerate}
\end{lemma}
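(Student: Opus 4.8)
The plan is to establish the two statements in the order (ii) then (i), since the first part of the lemma follows cleanly once the second is in hand. Throughout I use that $G$, being non-complete, has at least two moplexes~\cite{MR1626534}, so together with the hypothesis it has \emph{exactly} two, namely $U$ and $W$. I also use the elementary fact that a moplex is a nonempty clique, hence connected.

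For statement (ii), I would fix an arbitrary minimal separator $S$ and apply \cref{moplex in each CC}, which guarantees that every connected component of $G-S$ contains a moplex of $G$. Such a moplex is disjoint from $S$ and connected, so it lies entirely within a single component; consequently distinct components carry distinct moplexes. As $S$ is a separator, $G-S$ has at least two components, and if it had three or more then \cref{moplex in each CC} would produce three pairwise disjoint, hence pairwise distinct, moplexes, contradicting that only $U$ and $W$ exist. Thus $G-S$ has exactly two components; each contains a moplex, and since only $U$ and $W$ are available, one component contains $U$ and the other contains $W$, which is precisely statement (ii). (This reasoning is uniform in the connectivity of $G$: a disconnected such $G$ is just a disjoint union of the two cliques $U$ and $W$, whose only minimal separator is the empty set.)

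For statement (i), disjointness is immediate: since $G$ is non-complete it has a non-adjacent pair and therefore some minimal separator $S$, and statement (ii) places $U$ and $W$ in different components of $G-S$, so $U\cap W=\emptyset$. The heart of the argument—and the step I expect to be the main obstacle—is simpliciality, i.e.\ that $N(U)$ is a clique (the case of $W$ being symmetric). I would argue by contradiction: suppose $x,y\in N(U)$ are non-adjacent. Because $x$ and $y$ each have a neighbour in the module $U$, each is adjacent to \emph{every} vertex of $U$. Let $T$ be a minimal $x,y$-separator, which is in particular a minimal separator of $G$. For any $u\in U$, if $u\notin T$ then $x-u-y$ would be a path of $G-T$ joining $x$ and $y$ within one component, contradicting that $T$ separates them; hence $u\in T$, and so $U\subseteq T$. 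But $T$ is a minimal separator, so by statement (ii) one of the two components of $G-T$ must contain $U$—impossible once $U\subseteq T$. This contradiction shows $N(U)$ is a clique, and symmetrically for $W$, completing the proof. The key insight is that non-adjacent neighbours of a module force the entire module into every separating set, which collides directly with the ``one moplex per component'' dichotomy of (ii).
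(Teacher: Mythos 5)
Your proof is correct and follows essentially the same route as the paper: both parts rest on \cref{moplex in each CC} (one moplex per component of $G-S$, pigeonholed against the bound of two moplexes), with the non-adjacent pair $x,y\in N(U)$ forcing $U$ into every minimal $x{,}y$-separator. The only differences are cosmetic — you prove (ii) first and derive both disjointness and simpliciality from it, whereas the paper gets disjointness directly from moplexes being maximal sets with equal closed neighbourhoods and obtains the simpliciality contradiction by exhibiting three distinct moplexes $M_1$, $M_2$, $U$.
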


\begin{proof}
    First, we show that \cref{U and W are disjoint simplicial moplexes} holds.
    The fact that $U$ and $W$ are disjoint  follows directly from the definition of a moplex, and in particular because every moplex is a maximal set of vertices with the same closed neighbourhood.
    Suppose without loss of generality that $U$ is not a simplicial moplex, that is, there exist two non-adjacent vertices $a,b \in N(U)$.
    Let $S$ be a minimal $a{,}b$-separator.
    Then $S$ contains $U$, as every vertex in $U$ is adjacent to both $a$ and $b$.
    However, following \cref{moplex in each CC}, the graph $G-S$ contains at least two moplexes in $G$, say $M_1$ and $M_2$. But then $M_1$, $M_2$, and $U$ are three distinct moplexes in $G$, a contradiction to $G$ being a $2$-moplex graph.
    
    Second, we show \cref{G-S two cc}.
    Let $S$ be a minimal separator in $G$.
    If there exists a connected component of $G-S$ that does not contain $U$ or $W$, then \cref{moplex in each CC} implies that $G$ contains at least three moplexes, a contradiction.
    Thus, $G - S$ must contain exactly two connected components, 
    one of which contains $U$ and the other one $W$.
\end{proof}

\subsection{The asteroidal number is a lower bound on the moplex number}

We generalise a result by Berry and Bordat stating that a graph has an asteroidal triple of vertices if and only if it has an asteroidal triple of moplexes~\cite{berry2001asteroidal}.
The \emph{asteroidal number} of a graph $G$ is defined as the maximum size of an asteroidal set (see, e.g.,~\cite{lin1998leafage,MR1844876,MR3159134}).
An \emph{asteroidal set of moplexes} in a graph $G$ is a set $\{X_1,\ldots, X_k\}$ of pairwise disjoint moplexes in $G$ such that  for each $i\in \{1,\ldots, k\}$, the graph $G-N[X_i]$ contains all 
moplexes $X_j$, $j\neq i$, in the same connected component.
The result of Berry and Bordat~\cite{berry2001asteroidal} corresponds to the case $k = 3$ of the following more general statement.

 \begin{theorem}\label{thm:asteroidal}
 A graph has an asteroidal set of vertices of size $k$ if and only if it has an asteroidal set of moplexes of size $k$.
 \end{theorem}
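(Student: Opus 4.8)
The plan is to prove both directions separately, with the structural content of \cref{thm:an-mn} hidden inside the ``moplexes $\Rightarrow$ vertices'' direction being trivial and the interesting work lying in the ``vertices $\Rightarrow$ moplexes'' direction.

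\medskip

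\textbf{Easy direction: moplexes give vertices.} Suppose $\{X_1,\dots,X_k\}$ is an asteroidal set of moplexes. First I would pick one representative vertex $x_i\in X_i$ for each $i$. The claim is that $A=\{x_1,\dots,x_k\}$ is an asteroidal set of vertices. Fix $i$; I must show that all $x_j$ with $j\neq i$ lie in a single component of $G-N[x_i]$. Since $X_i$ is a module, every vertex outside $X_i$ is adjacent either to all of $X_i$ or to none of it, which gives $N[x_i]=N[X_i]$; hence $G-N[x_i]=G-N[X_i]$. By the asteroidal-set-of-moplexes hypothesis, all the moplexes $X_j$ ($j\neq i$) lie in one component $C$ of $G-N[X_i]$, so in particular every chosen representative $x_j$ lies in $C$. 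This direction is essentially bookkeeping once one observes $N[x_i]=N[X_i]$.

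\medskip

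\textbf{Hard direction: vertices give moplexes.} Here I start from an asteroidal set $A=\{a_1,\dots,a_k\}$ of vertices and must produce an asteroidal set of $k$ pairwise disjoint moplexes. The natural strategy is to replace each $a_i$ by a moplex living ``near'' $a_i$, found via a minimal separator. For each $i$, consider $N[a_i]$ and the component $C_i$ of $G-N[a_i]$ containing all the other $a_j$. I would look at the set $S_i=N(a_i)\cap N[C_i]$, or more robustly a minimal separator obtained from $N[a_i]$ that separates $a_i$ from $C_i$; then the component of $G-S_i$ containing $a_i$ houses a moplex $X_i$ by \cref{moplex in each CC}. The key properties I need to verify are: (1) the $X_i$ are pairwise disjoint, and (2) for each $i$, all $X_j$ ($j\neq i$) lie in one component of $G-N[X_i]$. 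For disjointness, I expect $X_i$ to be ``trapped on the $a_i$ side'' of each separator, so that the asteroidal separation of the $a_j$'s forces the $X_j$'s apart. For (2), I would argue that $N[X_i]$ is contained in (a close neighbourhood of) $N[a_i]$, so that $G-N[X_i]$ behaves like $G-N[a_i]$ with respect to connectivity of the remaining marked vertices, pulling the other moplexes into a single component.

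\medskip

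\textbf{Main obstacle.} The hard part will be controlling the relationship between $N[X_i]$ and $N[a_i]$ precisely enough to transfer the connectivity statement: a priori the moplex $X_i$ produced by \cref{moplex in each CC} sits inside a component of $G-S_i$ but need not be $a_i$ itself, so its neighbourhood $N[X_i]$ could differ from $N[a_i]$ in ways that either merge or split the components containing the other $X_j$. I anticipate needing a lemma of the form ``$X_i$ lies in the $a_i$-component of $G-N[a_i]$ and $N(X_i)\subseteq N[a_i]$'' (or an argument routing through the minimal separator $S_i$ and induced-cycle/avoidability structure) to guarantee that deleting $N[X_i]$ does not disconnect the witnessing path among the $a_j$'s. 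A clean way to secure this may be to choose $X_i$ as a moplex inside the component of $G-S_i$ on the $a_i$ side and then show that this whole component, together with $S_i$, is disjoint from and ``before'' all the other $a_j$'s, so that the single $C_i$-component survives intact after removing $N[X_i]$. Once that containment is pinned down, both disjointness and the single-component property should follow by reusing the asteroidal separation properties of the original vertex set $A$.
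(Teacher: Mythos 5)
Your easy direction is correct and is essentially what the paper leaves implicit: since $X_i$ is a clique module, $N[x_i]=N[X_i]$ for any representative $x_i\in X_i$, and the rest is bookkeeping. Your hard direction, however, takes a genuinely different route from the paper and is left with a real gap exactly where you flag the ``main obstacle''. You propose to replace \emph{all} the $a_i$ simultaneously by moplexes $X_i$ obtained from \cref{moplex in each CC}, and you then need to verify the full asteroidal-set-of-moplexes condition for sets $X_j$ that need not contain the $a_j$ and whose neighbourhoods you do not control. Your first anticipated lemma, $N(X_i)\subseteq N[a_i]$, is false in general: with $C_i$ the component of $G-N[a_i]$ containing $A\setminus\{a_i\}$, $S_i=N(V(C_i))$ and $D_i$ the component of $G-N[V(C_i)]$ containing $a_i$, the moplex $X_i$ can sit deep inside $D_i$, far from $a_i$, and all one gets is $N[X_i]\subseteq D_i\cup S_i$. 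Your second proposed resolution is the right one, but it is not carried out: one must actually prove that the sets $D_1,\dots,D_k$ are pairwise disjoint and that each $D_j$ is disjoint from $D_i\cup S_i$ for $j\neq i$ (this follows from $D_i\subseteq C_j$ and $D_j\cap N[V(C_j)]=\emptyset$), whence $C_i$ and every $D_j$ survive intact in $G-N[X_i]$ and the connectivity transfers. So the plan is completable, but the completion is precisely the missing content.

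The paper avoids this entirely with an extremal exchange argument: choose the asteroidal vertex set $A$ maximising the number of \emph{moplicial} vertices, and if some $a_i$ is not moplicial, swap it for a single vertex $x$ of a moplex in $D_i$ (found via \cref{moplex in each CC} applied to the minimal separator $S_i$) and check only that the new set is still an asteroidal set of \emph{vertices} --- a strictly easier verification than yours, since one path through $D_i$ from $x$ to $a_i$ suffices. Once every $a_i$ is moplicial, the moplexes $M_i\ni a_i$ satisfy $N[M_i]=N[a_i]$ and the asteroidal-set-of-moplexes condition is immediate. The trade-off is that your approach, if completed, constructs the moplexes in one pass, whereas the paper's argument is shorter because each exchange step only has to preserve the vertex-level asteroidal property.
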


\begin{proof}
Let $G$ be a graph and let $k$ be a positive integer.
If $G$ has an asteroidal set of moplexes of size $k$, then $G$ has an asteroidal set of vertices of size $k$.

Let $A = \{a_1, \dots, a_k\}$ be an asteroidal set of size $k$ in $G$ such that the number of moplicial vertices in $A$ is as large as possible.
First note that since $A$ is an independent set and every moplex is a clique, no two vertices in $A$ can belong to the same moplex.
Thus, to complete the proof it suffices to show that every vertex $a_i\in A$ is moplicial.
Indeed, denoting by $M_i$ the moplex of $G$ containing $a_i$, for all $i\in \{1,\ldots, k\}$, we would obtain that $\{M_1,\ldots, M_k\}$ is an asteroidal set of moplexes of size $k$.
Suppose that some $a_i \in A$ is not part of a moplex.
Let $C$ be the component of $G - N[a_i]$ containing $A \setminus \{a_i\}$ and let $D$ be the component of $G - N[V(C)]$ containing $a_i$.
Let $S = N(V(C))$ and observe that $S \subseteq N(a_i)$.
This implies that both $C$ and $D$ are $S$-full components of $G-S$, and thus that $S$ is a minimal $a_i{,}a_j$-separator for any $a_j \in A \setminus \{a_i\}$.
Hence, we can use \cref{moplex in each CC} and obtain that $D$ contains a moplex $M_D$ in $G$.
Let $x$ be a vertex in $M_D$ and $A' = (A \setminus \{a_i\}) \cup \{x\}$.

We show next that $A'$ is an asteroidal set in $G$.
Fix two vertices $v,w \in A' \setminus \{x\}$.
Note that $N[x] \subseteq D \cup S$, and since both $v$ and $w$ belong to $C$, there exists a $v{,}w$-path that does not contain any vertex from $N[x]$.
Furthermore, observe that there exists a path $P$ between $a_i$ and $x$ in $D$, which does not contain any vertex in $N[w]$.
Also, since $A$ is an asteroidal set, there exists a path $P'$ between $a_i$ and $v$ which does not contain any vertex in $N[w]$.
Hence, the subgraph $P\cup P'$ contains a walk between $x$ and $v$ in $G - N[w]$.
Therefore, $A'$ is an asteroidal set in $G$ of size $k$.
However, the number of vertices in $A'$ that belong to a moplex is strictly larger than the number of vertices in $A$ that belong to a moplex, contradicting the choice of $A$.
This shows that every vertex in $A$ is moplicial.
\end{proof}

\Cref{thm:an-mn} is an immediate consequence of \cref{thm:asteroidal}.

\numbers*

On the other hand, the gap between the moplex and asteroidal number can be arbitrarily large (consider, e.g., the class of stars). 

\Cref{thm:an-mn} implies that the asteroidal number is computable in polynomial time in any class of graphs with bounded moplex number.
Together with results from~\cite{MR2019494}, \cref{thm:an-mn} also implies that \textsc{Dominating Set} and \textsc{Total Dominating Set} can be solved in polynomial time in classes of graphs of bounded moplex number.
The same holds for \textsc{Independent Set},
\textsc{Independent Dominating Set},
and 
\textsc{Efficient Dominating Set},
along with their weighted variants~\cite{MR1686810}, for \textsc{$k$-Colouring},
for any fixed $k$~\cite{kratsch2012colouring}, and for \textsc{Weighted Feedback Vertex Set}~\cite{MR2432954}.
Furthermore, for graphs of bounded moplex number, bounded maximum degree implies bounded treewidth, as a consequence of the fact that graphs with asteroidal number at most $k$ have chordality at most $2k+1$ and of a result by Bodlaender and Thilikos showing that bounded chordality and bounded maximum degree implies bounded treewidth~\cite{BODLAENDER199745}.

For later use, we explicitly state the previous result of Berry and Bordat on $2$-moplex graphs~\cite{berry2001asteroidal} (which is now an immediate consequence of \cref{thm:an-mn}).

\begin{corollary}\label{graphs in M2 are AT-free}
Every $2$-moplex graph is AT-free.
\end{corollary}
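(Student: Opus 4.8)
The plan is to derive the statement directly from \cref{thm:an-mn}, which was just established, so that no new combinatorial work is needed. First I would observe that, by definition, a $2$-moplex graph has moplex number at most~$2$. Applying \cref{thm:an-mn}, which asserts that the asteroidal number of a graph is a lower bound on its moplex number, I would then conclude that every $2$-moplex graph has asteroidal number at most~$2$.

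It remains only to translate this bound on the asteroidal number into the AT-free property. Recall that the asteroidal number is defined as the \emph{maximum} size of an asteroidal set, and that an asteroidal triple is precisely an asteroidal set of cardinality three. Hence a graph whose asteroidal number is at most~$2$ can contain no asteroidal set of size three, i.e.\ no asteroidal triple, and is therefore AT-free. Chaining these two implications yields the corollary.

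The main obstacle here is essentially absent: all of the substantive difficulty resides in \cref{thm:asteroidal} (and thus in \cref{thm:an-mn}), whose proof carries out the delicate step of promoting an arbitrary asteroidal set of vertices into an asteroidal set of the same size consisting of moplexes, using \cref{moplex in each CC} to locate a moplex inside the relevant $S$-full component. The only point in the corollary itself that warrants a moment's care is the direction of the inequality and the fact that the asteroidal number is a maximum, so that capping it at~$2$ genuinely forbids triples rather than merely asteroidal sets of some particular other size.
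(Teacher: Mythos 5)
Your proposal is correct and matches the paper exactly: the paper presents this corollary as an immediate consequence of \cref{thm:an-mn}, deducing that a graph with at most two moplexes has asteroidal number at most two and hence contains no asteroidal triple. No further commentary is needed.
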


\subsection{On vertex deletion in \texorpdfstring{$k$}{k}-moplex graphs}\label{sec:Removing_avoidable_vertices}

Since the class of $k$-moplex graphs is not hereditary for any $k \geq 2$, it is a natural question to ask whether for all $k$, every $k$-moplex graph contains a vertex whose removal results in a $k$-moplex graph. 
Unfortunately this is not the case. 
It is not difficult to show that, for every $k\ge 2$, there exists a $k$-moplex graph $G_k$ such that deleting any vertex results in a graph that is not a $k$-moplex graph. 
See \cref{fig:counterexamples} for examples of such graphs for $k\in \{2,3,4\}$; the construction can be easily generalised to larger values of $k$. 

\begin{figure}[thb]
	\centering
    \begin{tikzpicture}[scale=0.5]
\tikzset{vertex/.style = {draw, circle, scale=0.4, thick, fill}}
\tikzset{
  pics/multihouse/.style n args={5}{
    code = {
          \coordinate (center) at ($ (#1,#2) + (#4:#5) $);
          \node[vertex] (0left) at ($ (center) + (90+#4:0.5*\scale)$) {};
          \node[vertex] (0right) at ($ (center) + (-90+#4:0.5*\scale) $) {};
          \draw (0left) to (0right);
          \foreach \y[evaluate={\z=int(\y -1)}] in {1,...,#3}{
            \node[vertex] (\y left) at ($ (0left) + (#4:\y*\scale) $) {};
            \node[vertex] (\y right) at ($ (0right) + (#4:\y*\scale) $) {};
            \draw (\y left) to (\y right)
                  (\z left) to (\y left)
                  (\z right) to (\y right);
          }
          \node[vertex] (m) at ($ (center) + (#4:{(#3+1)*\scale}) $) {};
          \draw (#3left) to (m) to (#3right);
    }
  }
}

\def\scale{0.5}
\begin{scope}[xshift=-9.5cm]
  \def\k{2}
  \def\r{0};
  \foreach \a in {1,...,\k}{
      \draw pic {multihouse={0}{0}{3}{\a*360/\k}{\r}};
  }
    \node at (0,-4) {$\mathstrut G_2$};
\end{scope}

\begin{scope}[]
  \def\k{3}
  \def\r{{\scale/(2*tan(180/\k))}};
  \foreach \a in {1,...,\k}{
      \draw pic {multihouse={0}{0}{3}{-30+\a*360/\k}{\r}};
  }
    \node at (0,-4) {$\mathstrut G_3$};
\end{scope}

\begin{scope}[xshift=9.5cm]
  \def\k{4}
  \def\r{{\scale/(2*tan(180/\k))}};
  \foreach \a in {1,...,\k}{
      \draw pic {multihouse={0}{0}{3}{-45+\a*360/\k}{\r}};
  }
  \node at (0,-4) {$\mathstrut G_4$};
\end{scope}
\end{tikzpicture}
	\caption{Examples of graphs in which deleting any vertex increases the moplex number. 
    In all these graphs, at least one neighbour of the removed vertex becomes moplicial.
    Moreover, existing moplicial vertices either remain moplicial or, if deleted, lead to two neighbours becoming moplicial.
 }\label{fig:counterexamples}
\end{figure}

Nevertheless, we show that there are still certain vertices that, if present, can be removed without leaving the class of $k$-moplex graphs. To this end, we first prove the following stronger statement.
 
 \begin{theorem}\label{thm:removing_avoidable_vertices}
    Let $G$ be a graph, $v \in V(G)$ an avoidable vertex that is not moplicial, and $M\subseteq V(G)$.
    Then $M$ is a moplex in $G$ if and only if it is a moplex in $G-v$.
\end{theorem}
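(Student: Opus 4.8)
<br>

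The plan is to prove the theorem by establishing both directions of the equivalence, leveraging the fact that $v$ is avoidable but not moplicial. The key structural insight I would exploit is that avoidable-but-not-moplicial vertices have a special local structure: informally, $v$ is ``sandwiched'' in a way that its removal does not create or destroy the clique-module-plus-minimal-separator structure defining a moplex elsewhere in the graph. First I would record the straightforward observations: if $M$ is a clique module in $G$ that does not contain $v$, then since $v \notin M$, whether $v$ is adjacent to all or none of $M$ is irrelevant to the clique-module property of $M$ in $G-v$, so $M$ remains a clique module in $G-v$; conversely a clique module in $G-v$ avoiding $v$'s special structure lifts back. The delicate part is the minimal-separator condition on $N(M)$, which is sensitive to the global connectivity of $G$ versus $G-v$.

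The main work will be to show that deleting $v$ neither splits nor merges the relevant $S$-full components. For the forward direction, suppose $M$ is a moplex in $G$ with $S = N_G(M)$ a minimal separator (or empty). I would show that $v \notin M$ (since $v$ is not moplicial) and that $v \notin S$, or handle the case $v \in S$ carefully. The crucial claim is that $S \setminus \{v\}$ remains a minimal separator in $G-v$ witnessing that $M$ is a moplex there: here I expect to invoke the characterisation that a separator is minimal iff both sides contain $S$-full components, combined with the property of avoidable vertices. Specifically, because $v$ is avoidable, every induced $P_3$ centred at $v$ lies on an induced cycle; this is precisely what rules out $v$ being a ``bottleneck'' whose deletion disconnects an $S$-full component or eliminates a minimal separator. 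I would argue that the $S$-full components on each side of $S$ in $G$ persist (possibly with $v$ removed) as $S$-full components in $G-v$, so minimality of the separator is preserved.

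For the reverse direction, I would start from a moplex $M$ in $G-v$ and show it is a moplex in $G$. The inclusion-maximality of $M$ as a clique module in $G$ is the first thing to check: adding $v$ back could in principle either enlarge a clique module (if $v$ is adjacent to all of $M$ and to exactly $N_{G-v}(M)$) or leave $M$ maximal. I would use that $v$ is not moplicial to preclude $v$ joining $M$ to form a larger moplex — if $v$ could extend $M$ or if $M \cup \{v\}$ were a clique module, one would derive that $v$ lies in a moplex, contradicting the hypothesis. The minimal-separator condition again requires showing that reinserting $v$ does not spoil the $S$-fullness of the two sides, which follows from the avoidability structure controlling how $v$ attaches to $N(M)$ and its components.

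The hard part will be the careful bookkeeping around the minimal-separator condition, precisely because the class of $k$-moplex graphs is \emph{not} hereditary, so the non-trivial content of the theorem lives entirely in this condition rather than in the clique-module part. I expect the technical crux to be proving that an $S$-full component in one graph remains $S$-full (hence the separator remains minimal) in the other, and that this is exactly where the avoidable-but-not-moplicial hypothesis is indispensable: the avoidability of $v$ guarantees via \cref{moplicial implies avoidable} and the induced-cycle structure that $v$ is never the unique connection realising $S$-fullness, while non-moplicity ensures $v$ is not itself part of any moplex being counted. A subtlety to watch is the boundary case where $N(M)$ is empty (i.e.\ $M$ induces a full connected component that is complete), which should be handled separately but easily, since deleting an avoidable non-moplicial $v$ cannot merge or split such a component inappropriately.
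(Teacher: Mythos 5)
Your proposal correctly locates the difficulty (the clique-module part is routine; the minimal-separator/$S$-fullness part is the crux), but the mechanism you propose for resolving it is not the one that works, and the actual key idea is missing. You assert that ``the avoidability of $v$ guarantees \ldots{} that $v$ is never the unique connection realising $S$-fullness,'' and that therefore the $S$-full components simply persist. Avoidability alone does not give this: it is perfectly consistent with $v$ being the unique neighbour in an $S$-full component $H$ of some $x\in S=N(M)$, in which case $H-v$ is genuinely not $S$-full in $G-v$. What the paper's proof does instead is argue by contradiction in both directions with a common target: whenever the moplex property fails to transfer, one uses the avoidability of $v$ (every extension $wvx$ lies on an induced cycle, which yields a $w{,}x$-path outside $N[v]\setminus\{w,x\}$) to exhibit an $N(v)$-full component of $G-N[v]$, concluding that $\{v\}$ itself is a moplex in $G$ --- contradicting the hypothesis that $v$ is not moplicial. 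So non-moplicity is not merely used to keep $v$ out of $M$ or out of ``moplexes being counted''; it is the contradiction that the entire argument drives toward. Your proposal never identifies this, and without it the claimed persistence of $S$-full components has no proof.

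Two further points. First, the paper begins by disposing of the case where $v$ has a true twin ($N[v']=N[v]$ for some $v'\neq v$), after which one may assume no vertex shares $v$'s closed neighbourhood; this assumption is then used in the backward direction (to find $x\in N[u]\setminus N[v]$) and is absent from your plan. Second, your worry in the reverse direction about $M\cup\{v\}$ becoming a larger clique module is essentially moot once the twin case is removed, whereas the real case analysis there (whether $v$ has neighbours in $M$, whether $M\subseteq N(v)$, whether $N(v)\subseteq M\cup S$, whether $v$ meets the $S$-full component $H$) --- each branch again terminating in ``$\{v\}$ is a moplex, contradiction'' --- is where the work lies. As written, the proposal is a plausible outline whose central step would fail if executed as described.
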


 \begin{proof}
    Observe that if there is a vertex $v' \in V(G) \setminus \{v\}$ such that $N[v'] = N[v]$, then there is a natural correspondence between the clique modules of $G$ and those of $G-v$, which preserves, in both directions, the property of being a moplex.\footnote{Every neighbourhood notation of the form $N(\cdot)$, $N[\cdot]$ in this proof is considered to be in $G$.} Thus, from now on, we assume that for any vertex $v' \neq v$ in $G$, it holds that $N[v'] \neq N[v]$.
     
    Suppose that $M$ is a moplex in $G$ but not a moplex in $G-v$, and let $S=N(M)$. 
    By assumption, $v$ is not moplicial in $G$, and thus $v \notin M$.
    As $M$ is a clique and a module in $G-v$, there is no $S$-full component of $G-(S\cup \{v\})$ other than $M$ itself, otherwise $M$ would remain a moplex in $G-v$.
    Since such a component exists in $G-S$, call it $H$, we infer that $v$ belongs to this component and that there is a vertex $x \in S$ such that $v$ is the only neighbour of $x$ in $H$, that is, $N(x) \cap V(H) = \{v\}$.
    Let $H'$ be the connected component of $G-N[v]$ containing $M$.
    We claim that $H'$ is $N(v)$-full.
    We know that $N[v] \subseteq S \cup \V{H}$.
    Since $M$ lies in $H'$, every vertex in $N(v) \cap S$ has a neighbour in $H'$.
    Consider now a vertex $w \in N(v) \setminus S$.
    Then $w\in V(H)$ and, since $v$ is the only vertex in $H$ that is adjacent to $x$, we infer that $w$ is not adjacent to $x$.
    Thus, we get an extension $wvx$ and, since $v$ is avoidable in $G$, a $w{,}x$-path $P$ in $G-(N[v]\setminus \{w,x\})$.
    Let $y$ be the first vertex of $P$ not in $H$.
    Then $y$ belongs to $S$ and consequently to $H'$.
    Furthermore, since $v$ is the only neighbour of $x$ in $H$, we have $y \neq x$.
    Hence, the path $P'$ obtained from $P$ by removing $x$ and $w$ lies in $H'$, and so $w$ has a neighbour in $H'$.
    We obtain that every neighbour of $v$ has a neighbour in $H'$, and thus that $H'$ is $N(v)$-full. 
    It follows that $\{v\}$ is a moplex in $G$, a contradiction.
    
    For the backward direction, suppose towards a contradiction that there exists a moplex $M$ in $G-v$ which is not a moplex in $G$.
    Let $S$ be the neighbourhood of $M$ in $G-v$.
    Since $S$ is a minimal separator in $G-v$, there exists an $S$-full component $H$ in $(G-v)-N[M]$.
    Assume first that $v$ has no neighbours in $M$.
    Then $N(M) = S$, and hence the component of $G-S$ containing the vertices of $H$ contains no vertices of $M$ and is $S$-full. 
    This implies that $M$ is a moplex in $G$, a contradiction.
    Hence, there exists a vertex $u \in N(v) \cap M$.
    
    We claim that $M \subseteq N(v)$.
    Towards a contradiction, suppose there exists a vertex $u' \in M \setminus N(v)$.
    Let $H'$ be the component of $G-N[v]$ containing $u'$.
    We show that $H'$ is $N(v)$-full in $G$.
    Since $u'\in M$, all the vertices in $(M\cup S)\setminus \{u'\}$ are adjacent to $u'$. 
    Thus, as $u'$ is in $H'$, every vertex in $N(v)\cap (M \cup S)$ has a neighbour in $H'$.
    So, let $z \in N(v) \setminus (M \cup S)$.
    Note that $u\in M$ and $z\not\in S\cup \{v\} = N(M)$, and hence the vertices $u$ and $z$ are non-adjacent in $G$, which implies that $uvz$ is an extension of $v$.
    Since $v$ is avoidable, there is a $u{,}z$-path in $G-(N[v]\setminus \{u,z\})$.
    Any such path intersects $S\setminus N(v)$, and thus all its internal vertices are in the same component of $G-N[v]$ as $u'$, namely $H'$. In particular, this is the case for the neighbour of $z$ on the path.
    This shows that every vertex in $N(v) \setminus (M \cup S)$ has a neighbour in $H'$.
    We conclude that $H'$ is $N(v)$-full in $G$, as claimed.
    Thus, $\{v\}$ is a moplex in $G$, a contradiction.

    From the previous observation that $M \subseteq N(v)$, we readily get that $v$ does not have any neighbours in $H$.
    Indeed, if $v$ has a neighbour in $H$, then $M$ would be a clique module in $G$ with neighbourhood $S \cup \{v\}$ and $H$ would be an $(S \cup \{v\})$-full component, and hence $M$ would be a moplex in $G$, a contradiction.
        
    We show next that $N(v) \setminus (M \cup S)$ is non-empty.
    Suppose that $N(v) \subseteq M \cup S$.
    Recall that $N[v] \neq N[u]$.
    Thus, since $N[u] = M \cup S \cup \{v\}$, there exists a vertex $x \in N[u]\setminus N[v]$.
    Then $x\in M\cup S$; however, as $M \subseteq N(v)$, we must have $x \in S$.
    Following the fact that $S = N(V(H))$ and $M\subseteq N(x)$, we obtain that $(S\cup M)\setminus\{x\}\subseteq N(V(H) \cup \{x\})$, and hence $N(v) \subseteq N(V(H) \cup \{x\})$.
    As previously shown, $v$ has no neighbours in $H$.
    Hence, the vertices in $V(H) \cup \{x\}$ all belong to the same connected component of $G-N(v)$.
    However, this implies that $N(v)$ is a minimal separator in $G$, and thus $\{v\}$ is a moplex in $G$, a contradiction.
    Thus, $N(v) \setminus (M \cup S)$ is non-empty.
    
    Let $z \in N(v) \setminus (M \cup S)$. 
    Then $u$ and $z$ are non-adjacent in $G$ and $uvz$ is an extension of $v$.
    Since $v$ is avoidable, there exists an induced $u{,}z$-path in $G$ having all internal vertices in $G-N[v]$.
    In particular, this path must contain a vertex in $S \setminus N(v)$, and thus $S\setminus N(v) \neq \emptyset$.
    Following the fact that $N(v) \cap V(H) = \emptyset$, there exists a component $H'$ of $G-N[v]$ that contains all vertices of $H$.
    We show that $H'$ is an $N(v)$-full component in $G$.
    As every vertex of $S$ has a neighbour in $H$, we infer that every vertex in $S\setminus N(v)$ belongs to $H'$, while every vertex in $N(v)\cap S$ has a neighbour in $H'$.
    Furthermore, since $\emptyset \neq S\setminus N(v) \subseteq \V{H'}$, every vertex of $N(v)\cap M = M$ has a neighbour in $H'$.
    Finally, as $v$ is avoidable, for every vertex $y \in N(v)\setminus(M \cup S)$ there is a $u{,}y$-path in $G-(N[v]\setminus\Set{u,y})$.
    Any such path intersects $S \setminus N(v)$, and thus all its internal vertices are contained in $H'$.
    Hence, $\Set{v}$ is a moplex in $G$, a contradiction.
 \end{proof}

We can now establish the announced claim, which is later applied in \cref{sec:traceable}.

\begin{corollary}\label{remove_avoidable_vertices}
For every positive integer $k$, the class of $k$-moplex graphs is closed under deletion of an avoidable vertex that is not moplicial.
\end{corollary}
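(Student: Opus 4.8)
The plan is to derive \cref{remove_avoidable_vertices} directly from \cref{thm:removing_avoidable_vertices}, which already does essentially all of the work. The key observation is that the \emph{moplex number} of a graph is simply the number of distinct moplexes it contains, so if I can show that the collection of moplexes of $G$ and the collection of moplexes of $G-v$ coincide whenever $v$ is avoidable and not moplicial, then the two graphs have exactly the same moplex number, and in particular $G-v$ is a $k$-moplex graph whenever $G$ is.

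First I would fix a positive integer $k$ and a $k$-moplex graph $G$, and suppose $v \in V(G)$ is an avoidable vertex that is not moplicial. By \cref{thm:removing_avoidable_vertices}, for every set $M \subseteq V(G)$ we have that $M$ is a moplex in $G$ if and only if $M$ is a moplex in $G-v$. The ``only if'' direction tells me that every moplex of $G$ is also a moplex of $G-v$; conversely, every moplex $M$ of $G-v$ is a subset of $V(G)$ and hence, by the ``if'' direction, is a moplex of $G$. Therefore the set of moplexes of $G$ and the set of moplexes of $G-v$ are literally the same family of vertex subsets.

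Since the two families coincide, their cardinalities are equal, so the moplex number of $G-v$ equals the moplex number of $G$. As $G$ is a $k$-moplex graph, its moplex number is at most $k$, and hence the moplex number of $G-v$ is also at most $k$, meaning $G-v$ is a $k$-moplex graph. This establishes that the class of $k$-moplex graphs is closed under deletion of an avoidable vertex that is not moplicial, as claimed.

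There is no real obstacle here, since the substantive content lives entirely in \cref{thm:removing_avoidable_vertices}; the only point that warrants a moment of care is the implicit verification that a moplex of $G-v$, being a subset of $V(G-v) \subseteq V(G)$, is a legitimate candidate $M$ to feed into the biconditional of the theorem. This is immediate because $v \notin M$ for any moplex $M$ of $G-v$, and the theorem statement quantifies over arbitrary $M \subseteq V(G)$, so both directions of the equivalence apply without any side condition.
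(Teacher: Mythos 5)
Your proposal is correct and follows essentially the same route as the paper: both derive the corollary directly from \cref{thm:removing_avoidable_vertices} by observing that every moplex of $G-v$ is a moplex of $G$, so the moplex count cannot increase. The only cosmetic difference is that you invoke both directions of the biconditional to conclude the moplex families coincide exactly, whereas the paper only needs the one direction stating that moplexes of $G-v$ are moplexes of $G$.
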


\begin{proof}
Let $G$ be a $k$-moplex graph, and let $v \in V(G)$ be an avoidable vertex that is not moplicial.
By \cref{thm:removing_avoidable_vertices}, every moplex in $G-v$ is a moplex in $G$. 
Thus, since $G$ has at most $k$ moplexes, so does $G-v$.
\end{proof}

\subsection{Connections with proper interval graphs and cochain graphs}
\label{sec:PIGs}

Berry and Bordat characterised graphs in which every connected induced subgraph has at most two moplexes by the well-known class of proper interval graphs.
Since \cref{thm:hereditary-connected-M2} was stated in~\cite{berry2001asteroidal} without a detailed proof, we offer a short proof.

\begin{theorem}[Berry and Bordat~\cite{berry2001asteroidal}]\label{thm:hereditary-connected-M2}
Let $G$ be a graph. Then, each connected induced subgraph of $G$ has at most two moplexes if and only if $G$ is a proper interval graph.
\end{theorem}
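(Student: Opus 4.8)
The plan is to combine \cref{thm:PIGs}, which characterises proper interval graphs as the claw-free AT-free chordal graphs, with a direct count of moplexes in small graphs for one direction and with an indifference ordering for the other. Throughout I will use that the vertices of a clique module all share one closed neighbourhood and that distinct moplexes are therefore disjoint.

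For the forward direction I would argue contrapositively: if $G$ is \emph{not} a proper interval graph, then by \cref{thm:PIGs} it contains an induced claw, an induced cycle $C_k$ with $k\ge 4$, or an asteroidal triple, and in each case I exhibit a connected induced subgraph with at least three moplexes. The claw itself has exactly three moplexes, as each of its three leaves is a (simplicial) moplex while the centre is not moplicial, its neighbourhood not being a separator. An induced cycle $C_k$ with $k\ge 4$ is connected and every one of its vertices $v$ forms a moplex, since $N(v)$ is a minimal separator whose removal splits $C_k$ into the $N(v)$-full components $\{v\}$ and a path; this already gives at least four moplexes. Finally, if $\{a,b,c\}$ is an asteroidal triple, then $a,b,c$ lie in a common connected component $H$ of $G$, the triple remains asteroidal in $H$, and \cref{thm:an-mn} forces $H$ to have at least three moplexes. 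In all three cases this contradicts the hypothesis that every connected induced subgraph of $G$ is a $2$-moplex graph, so $G$ must be a proper interval graph.

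For the backward direction, since proper interval graphs form a hereditary class, it suffices to prove that every connected proper interval graph has at most two moplexes. Here I would fix an indifference ordering $v_1,\dots,v_n$, so that $v_av_c\in \E{G}$ with $a<b<c$ implies $v_av_b,v_bv_c\in \E{G}$. From this single property I first derive the facts I need: each closed neighbourhood $N[v_i]$ is an interval $[\ell_i,r_i]$ of the order, the endpoints $\ell_i$ and $r_i$ are non-decreasing in $i$, and, using connectedness, consecutive vertices are adjacent. Since distinct moplexes are disjoint, at most one moplex can contain $v_1$ and at most one can contain $v_n$; hence it suffices to show that \emph{every} moplex contains $v_1$ or $v_n$, which then yields the bound of two.

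The crux, and the step I expect to be the main obstacle, is ruling out an interior moplex. Suppose $M$ is a moplex avoiding both $v_1$ and $v_n$. Monotonicity of the endpoints shows that $M$ is a block of consecutive vertices $\{v_i,\dots,v_j\}$ with $2\le i\le j\le n-1$ and that $S:=N(M)$ splits into two flanking intervals $[p,i-1]\cup[j+1,q]$, where $[p,q]=N[M]$. In $G-S$ the set $M$ is one full component; the remaining vertices fall into a left part $L=[1,p-1]$ and a right part $R=[q+1,n]$ with no edges between them, and the endpoint monotonicity confines the external neighbours of $v_{i-1}\in S$ entirely to $L$ and those of $v_{j+1}\in S$ entirely to $R$. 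Consequently no component of $G-S$ other than $M$ can dominate both $v_{i-1}$ and $v_{j+1}$, so $G-S$ has at most one $S$-full component and $S$ cannot be a minimal separator, contradicting that $M$ is a moplex. This forces every moplex to contain an extreme vertex of the ordering, completing the proof.
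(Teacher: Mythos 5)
Your proof is correct and follows essentially the same route as the paper's: the forward direction is identical (the claw, long induced cycles, and AT-freeness via the asteroidal-set/moplex connection, combined with \cref{thm:PIGs}), while your backward direction, though phrased via an indifference ordering with moplexes handled as consecutive blocks rather than via an interval model with true twins contracted first, rests on the same key observation that a moplex away from the ends of the linear order has neighbours on both sides which no single other component of $G-N(M)$ can dominate. The only point worth adding is a citation for the indifference-ordering characterisation of proper interval graphs, which the paper does not state explicitly.
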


\begin{proof}
Suppose first that each connected induced subgraph of $G$ has at most two moplexes. Note that for every $k\ge 4$, the cycle $C_k$ is a connected graph in which every vertex forms a moplex. Furthermore, the claw is a connected graph in which every vertex of degree one forms a moplex. Thus, $G$ must be claw-free and chordal. 
By \cref{graphs in M2 are AT-free}, this implies that $G$ is a claw-free AT-free chordal graph.
Using \cref{thm:PIGs} we conclude that $G$ is a proper interval graph.

For the converse direction, let $G$ be a proper interval graph and let $H$ be a connected induced subgraph of $G$. Then, $H$ is a connected proper interval graph.
We need to show that $H$ has at most two moplexes.
Since both the connectedness and the moplex number are preserved upon deleting a vertex from a pair of vertices with the same closed neighbourhoods, we may assume that no two vertices in $H$ have the same closed neighbourhood.
Under this assumption, every moplex in $H$ has size one.
To complete the proof, fix a proper interval model of $H$ and let $I_1, \ldots, I_n$ be the ordering of the intervals according to their left endpoints.
For $j\in \{1,\ldots, n\}$, let $v_j$ be the vertex represented by $I_j$.
It now suffices to show that for all $j\in \{2,\ldots, n-1\}$, the set $\{v_j\}$ is not a moplex in $H$.
Suppose towards a contradiction that $\{v_j\}$ is a moplex. 
Then, the graph $H-N[v_j]$ contains an $N(v_j)$-full component $C$.
Since $C$ contains no vertex from the closed neighbourhood of $v_j$, no interval representing a vertex in $C$ intersects $I_j$. 
Furthermore, since $C$ is a connected graph, we may assume that all its vertices are represented by intervals whose left endpoints are strictly larger than the right endpoint of $I_j$. 
The connectedness of $H$ and the ordering of the intervals imply that $I_{j-1}$ intersects $I_j$, that is, $v_{j-1}$ is adjacent to $v_j$.
However, since all intervals representing vertices in $C$ are disjoint from $I_j$ and lie entirely to the right of $I_j$, the fact that  $I_{j-1}$ ends before $I_j$ ends implies that no vertex in $C$ can be adjacent to $v_{j-1}\in N(v_j)$.
This contradicts the assumption that $C$ is an $N(v_j)$-full component of $H-N[v_j]$.
\end{proof}%
\renewcommand*{\proofname}{Proof.}

\noindent\textbf{Remark.} The fact that every connected proper interval graph has at most two moplexes can also be derived using known results from the literature:
\begin{enumerate}
	\item a result of Roberts~\cite{MR0252267} stating that 
every connected proper interval graph in which no two distinct vertices have the same closed neighbourhoods has at most two extreme vertices, where an extreme vertex is a simplicial vertex $s$ such that every pair of neighbours of $s$ have a common neighbour outside $N[s]$, and 
	\item the fact that for every minimal separator $S$ in a chordal graph $G$, every $S$-full component of $G-S$ has a vertex dominating $S$ (see, e.g.,~\cite{MR1663107}).
\end{enumerate}

\bigskip

At this point, let us recall the four hereditary graph classes which naturally correspond to any graph class, as mentioned in \cref{sec:intro}. Set $\moplexGraphs$ to be the class of $2$-moplex graphs and $\moplexGraphs_c$ to be the class of graphs all the connected components of which are $2$-moplex. Then the aforementioned four graph classes are:

\begin{description}
\item[class $\moplexGraphs^{-}$:]\label{item Gminus}
the largest hereditary graph class contained in $\moplexGraphs$, that is, the class of graphs all of whose induced subgraphs are $2$-moplex graphs,
\item[class $\moplexGraphs^{+}$:]\label{item Gplus} the smallest hereditary graph class containing $\moplexGraphs$, that is, the class of all induced subgraphs of $2$-moplex graphs,
\item[class $\moplexGraphs_c^{-}$:] \label{item Gcminus}
the largest hereditary graph class contained in $\moplexGraphs_c$, that is, the class of graphs all of whose induced subgraphs only have $2$-moplex graphs as connected components,
and
\item[class $\moplexGraphs_c^{+}$:]\label{item Gcplus}
the smallest hereditary graph class containing $\moplexGraphs_c$, that is, the class of all induced subgraphs of graphs in $\moplexGraphs_c$.
\end{description}

\medskip
The following inclusion relations follow directly from the definitions:
\begin{equation*}
\moplexGraphs^{-}\subseteq\: \moplexGraphs \subseteq\:\moplexGraphs^{+}\,, \quad\quad  
\moplexGraphs_c^{-}\subseteq\: \moplexGraphs_c \subseteq\:\moplexGraphs_c^{+},
\end{equation*}
\begin{equation*}
\moplexGraphs^{-}\subseteq\: \moplexGraphs_c^{-}, \quad
\moplexGraphs\subseteq\: \moplexGraphs_c, \quad
    \textrm{ and }
  \quad
\moplexGraphs^{+}\subseteq\: \moplexGraphs_c^{+}.
\end{equation*}

\medskip
\Cref{thm:hereditary-connected-M2}
implies the following.

\begin{corollary}
$\moplexGraphs_c^{-}$ is the class of proper interval graphs.
\end{corollary}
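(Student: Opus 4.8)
The plan is to reduce the statement directly to \cref{thm:hereditary-connected-M2}, which already characterises, in terms of proper interval graphs, exactly those graphs all of whose connected induced subgraphs have at most two moplexes. Concretely, I would first unfold the definition of $\moplexGraphs_c^{-}$ into an equivalent condition phrased purely in terms of connected induced subgraphs, and only then quote the theorem.

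First I would recall that, by definition, a graph $G$ lies in $\moplexGraphs_c^{-}$ precisely when every induced subgraph $H$ of $G$ belongs to $\moplexGraphs_c$, that is, when every connected component of every induced subgraph of $G$ is a $2$-moplex graph. The key step---and the only one that requires any argument at all---is the identification of the family of connected components of induced subgraphs of $G$ with the family of connected induced subgraphs of $G$. On the one hand, any connected component $C$ of an induced subgraph $H$ of $G$ is itself a connected induced subgraph of $G$, since $C$ is induced in $H$ and $H$ is induced in $G$, so $C$ is induced in $G$ and connected by assumption. On the other hand, any connected induced subgraph $C$ of $G$ trivially occurs as the unique connected component of the induced subgraph $C$ of $G$. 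Hence the condition ``every connected component of every induced subgraph of $G$ is a $2$-moplex graph'' is equivalent to the condition ``every connected induced subgraph of $G$ is a $2$-moplex graph'', i.e.\ has at most two moplexes.

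Finally, I would invoke \cref{thm:hereditary-connected-M2}, which states that the latter condition holds if and only if $G$ is a proper interval graph. Chaining the three equivalences yields that $G \in \moplexGraphs_c^{-}$ if and only if $G$ is a proper interval graph, which is the claimed equality of classes. I do not anticipate any genuine obstacle here: the argument is essentially a routine unfolding of the definition of $\moplexGraphs_c^{-}$, and the only point deserving explicit mention is the elementary but load-bearing observation that the connected induced subgraphs of $G$ and the connected components of induced subgraphs of $G$ are one and the same family.
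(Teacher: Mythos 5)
Your proposal is correct and follows essentially the same route as the paper: both proofs reduce the claim to \cref{thm:hereditary-connected-M2} by observing that the connected components of induced subgraphs of $G$ are exactly the connected induced subgraphs of $G$. No gaps.
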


\begin{proof}
By \Cref{thm:hereditary-connected-M2}, it suffices to show that a graph $G$ belongs to $\moplexGraphs_c^{-}$ if and only if each connected induced subgraph of $G$ has at most two moplexes.

Assume first that $G$ is a graph in $\moplexGraphs_c^{-}$ and let $G'$ be a connected induced subgraph of $G$. Since $G\in \moplexGraphs_c^{-}$, each connected component of $G'$ is a $2$-moplex graph, which implies that $G'$, being connected, has at most two moplexes.

Assume now that each connected induced subgraph of $G$ has at most two moplexes.
To show that $G$ belongs to $\moplexGraphs_c^{-}$, consider an arbitrary induced subgraph $G'$ of $G$ and any component $C$ of $G'$.
Then $C$ is a connected induced subgraph of $G$ and thus a $2$-moplex graph by assumption.
Hence, $G$ belongs to $\moplexGraphs_c^{-}$.
\end{proof}

We now determine the remaining three hereditary classes, $\moplexGraphs^{-}$, $\moplexGraphs^{+}$, and $\moplexGraphs_c^{+}$, naturally associated with the class of $2$-moplex graphs.
\Cref{thm:cochain} shows that $\moplexGraphs^{-}$ is the class of cochain graphs, while the results in the following subsection show that $\moplexGraphs_c^{+}$ and $\moplexGraphs^{+}$ both coincide with the class of cocomparability graphs, yielding 

\begin{equation}\label{eq:subset-seq}
\underbrace{\moplexGraphs^{-}}_{\mathclap{\text{cochain}}} \subset\:  \overbrace{\moplexGraphs}^{\mathclap{\text{$2$-moplex}}} \subset\:   \underbrace{\moplexGraphs^{+}}_{\mathclap{\text{cocomparability}}}
  \quad\quad\quad
  \textrm{ and }
  \quad\quad\quad
\underbrace{\moplexGraphs_c^{-}}_{\mathclap{\text{proper interval}}} \subset\:  \overbrace{\moplexGraphs_c}^{\mathclap{\text{each component is $2$-moplex}}} \subset\:   \underbrace{\moplexGraphs_c^{+}}_{\mathclap{\text{cocomparability}}}\,.
\end{equation}

\Cref{fig:hasse} shows a Hasse diagram summarising both chains of inclusions, for the class of $2$-moplex graphs and in general.

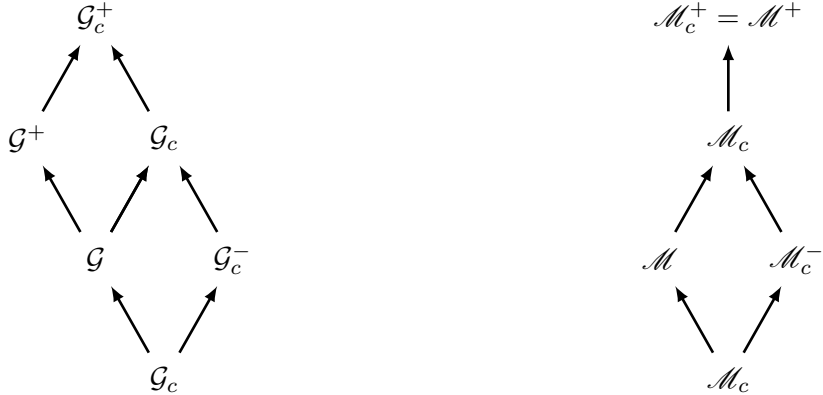
\begin{figure}[ht!]
    \begin{subfigure}[b]{0.495\textwidth}
        \centering
        \begin{tikzpicture}
        \tikzstyle{edge} = [draw=black,line width=1pt]
        \tikzstyle{directededge} = [edge,->,-latex]
        \def\hor{1.8}
        \def\vert{1.6}
        \def\height{20pt}
        \node (left) at (-4,0) {};
        \node (center) at (0,0) {};
        \node (right) at (4,0) {};
        \node[minimum height=\height] (M) at ($(center)+(-0.5*\hor,0)$) {$\mathcal{G}$};
        \node[minimum height=\height] (M-c-m) at ($(center)+(0.5*\hor,0)$) {$\mathcal{G}_c^{-}$};
        \node[minimum height=\height] (M-c) at ($(center)+(0,\vert)$) {$\mathcal{G}_c$};
        \node[minimum height=\height] (M-m) at ($(center)+(0,-\vert)$) {$\mathcal{G}_c$};
        \node[minimum height=\height] (M-c-p) at ($(M-c)+(-0.5*\hor,\vert)$) {$\mathcal{G}_c^{+}$};
        \node[minimum height=\height] (M-p) at ($(M-c)+(-\hor,0)$) {$\mathcal{G}^{+}$};
        
        \draw[directededge] (M-m) to (M);
        \draw[directededge] (M-m) to (M-c-m);
        \draw[directededge] (M-c-m) to (M-c);
        \draw[directededge] (M) to (M-c);
        \draw[directededge] (M) to (M-p);
        \draw[directededge] (M-c) to (M-c-p);
        \draw[directededge] (M) to (M-c);
        \draw[directededge] (M-p) to (M-c-p);
    \end{tikzpicture}
    \end{subfigure}%
    \hfill%
    \begin{subfigure}[b]{0.495\textwidth}
    \centering
        \begin{tikzpicture}
        \tikzstyle{edge} = [draw=black,line width=1pt]
        \tikzstyle{directededge} = [edge,->,-latex]
        \def\hor{1.8}
        \def\vert{1.6}
        \def\height{20pt}
        \node (left) at (-4,0) {};
        \node (center) at (0,0) {};
        \node (right) at (4,0) {};
        \node[minimum height=\height] (M) at ($(center)+(-0.5*\hor,0)$) {$\moplexGraphs$};
        \node[minimum height=\height] (M-c-m) at ($(center)+(0.5*\hor,0)$) {$\moplexGraphs_c^{-}$};
        \node[minimum height=\height] (M-c) at ($(center)+(0,\vert)$) {$\moplexGraphs_c$};
        \node[minimum height=\height] (M-m) at ($(center)+(0,-\vert)$) {$\moplexGraphs_c$};
        \node[minimum height=\height] (M-p) at ($(M-c)+(0,\vert)$) {$\moplexGraphs_c^{+}=\moplexGraphs^{+}$};
        
        \draw[directededge] (M-m) to (M);
        \draw[directededge] (M-m) to (M-c-m);
        \draw[directededge] (M-c-m) to (M-c);
        \draw[directededge] (M) to (M-c);
        \draw[directededge] (M-c) to (M-p);
    \end{tikzpicture}
    \end{subfigure}
    \caption{On the left, the Hasse diagram showing inclusion relations between several classes derived from an arbitrary graph class $\mathcal{G}$ (see \cpageref{item cochain} for the definitions). 
    When $\mathcal{G}$ is the class $\moplexGraphs$ of $2$-moplex graphs, the diagram simplifies to the diagram on the right.}
    \label{fig:hasse}
\end{figure}

Let us also comment on the remaining three inclusions 
$\moplexGraphs^{-}\subseteq\: \moplexGraphs_c^{-}$, 
$\moplexGraphs\subseteq\: \moplexGraphs_c$, and 
$\moplexGraphs^{+}\subseteq\: \moplexGraphs_c^{+}$.
Since $3K_1$ is a proper interval graph but not a $2$-moplex graph, the inclusions $\moplexGraphs^{-}\subseteq \moplexGraphs_c^{-}$ and $\moplexGraphs\subseteq \moplexGraphs_c$ are both proper.
On the other hand, the inclusion $\moplexGraphs^{+}\subseteq \moplexGraphs_c^{+}$ holds with equality.

\CochainsMaximalHereditarySubclass*

\begin{proof}
Suppose first that each induced subgraph of $G$ has at most two moplexes.
Since for every $k\ge 4$ the cycle $C_k$ is a connected graph in which every vertex forms a moplex, $G$ is a chordal graph.
Furthermore, since the graph $3K_1$ has three moplexes, $G$ has independence number at most two.
As $G$ is a chordal graph, it is also perfect, and hence the vertex set of $G$ can be covered with two disjoint cliques $X$ and $Y$ (see~\cite{MR302480}), that is, $G$ is cobipartite.
Furthermore, for any two vertices $x_1,x_2$ in $X$ we must have $N(x_1)\cap Y\subseteq N(x_2)\cap Y$ or $N(x_2)\cap Y\subseteq N(x_1)\cap Y$, since otherwise $G$ would contain an induced $4$-cycle. 
Using the fact that $X$ is a clique, we infer that $N[x_1]\subseteq N[x_2]$ or $N[x_2]\subseteq N[x_1]$; since this holds for any two vertices in $X$,
we conclude that $G$ is a cochain graph.

For the converse direction, let $G$ be a cochain graph.
We need to show that $G$ has at most two moplexes.
If $G$ is disconnected, then $G$ is isomorphic to the disjoint union of two complete graphs, and hence has moplex number two.
So we may assume that $G$ is connected.
It is not difficult to see that $G$ is a chordal graph.
Furthermore, since $G$ is a cobipartite graph, $G$ is also claw-free and AT-free.
Thus, $G$ is a proper interval graph by \cref{thm:PIGs}.
Finally, by \cref{thm:hereditary-connected-M2}, $G$ has at most two moplexes.
\end{proof}

\subsection{Connection with cocomparability graphs}
\label{sub:conwithcocomp}

In this subsection, we show that the smallest hereditary graph class containing the class of $2$-moplex graphs is the class of cocomparability graphs.
In order to prove that every $2$-moplex graph $G$ is a cocomparability graph, we identify a property common to all minimal $x{,}y$-separators, for any two non-adjacent vertices $x$ and $y$ of $G$.
We then exploit this property to orient the edges of the complement of $G$ in a transitive way.

Let $G$ be a non-complete $2$-moplex graph with the two moplexes $U,W\subseteq V(G)$.
For two non-adjacent vertices $x$ and $y$ we denote by $\mathcal{S}_G(x,y)$ (or simply $\mathcal{S}(x,y)$ if the graph is clear from the context) the set of all minimal $x{,}y$-separators in $G$. 
Given $M\in \{U,W\}$ and $S\in \mathcal{S}(x,y)$, we say that \emph{$M$ prefers $x$ to $y$ with respect to $S$} if $M$ and $x$ lie in the same connected component of $G-S$. 
By \cref{G-S two cc} of \cref{M2 moplexes properties}, either $M$ prefers $x$ to $y$ with respect to $S$ or $M$ prefers $y$ to $x$ with respect to $S$.
As the following key lemma shows, which of these two cases occurs is actually independent of the choice of~$S$.

\begin{lemma}
\label{lem:unique-separation}
Let $G$ be a non-complete $2$-moplex graph and denote by $U$ and $W$ its two moplexes.
Then, for each $M \in \Set{U,W}$ and for every two non-adjacent vertices $x$ and $y$ in $G$ exactly one of the following conditions holds:
\begin{itemize}
    \item $M$ prefers $x$ to $y$ with respect to all $S\in \mathcal{S}(x,y)$, or
    \item $M$ prefers $y$ to $x$ with respect to all $S\in \mathcal{S}(x,y)$.
\end{itemize}
\end{lemma}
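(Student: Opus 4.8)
The plan is to argue by contradiction, and by symmetry it suffices to handle $M=U$. So suppose there are two minimal separators $S_1,S_2\in\mathcal{S}(x,y)$ such that $U$ prefers $x$ to $y$ with respect to $S_1$ but prefers $y$ to $x$ with respect to $S_2$. Applying \cref{G-S two cc} of \cref{M2 moplexes properties} to each of them, I would write $G-S_1=A_1\cup B_1$ and $G-S_2=A_2\cup B_2$, where each deletion leaves exactly two components that are split between the two moplexes. The two opposite preferences then pin down the placement completely: $x,U\subseteq A_1$ and $y,W\subseteq B_1$, while $x,W\subseteq A_2$ and $y,U\subseteq B_2$. Finally I would fix representatives $u\in U$ and $w\in W$; the target is to show that $\{u,w,y\}$ is an asteroidal triple, which contradicts \cref{graphs in M2 are AT-free}.

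The single observation driving everything is that whenever a vertex $v$ lies in a component $C$ of $G-S_i$, its closed neighbourhood satisfies $N[v]\subseteq C\cup S_i$ and is hence disjoint from the opposite component. Reading this off the four corners gives the non-adjacencies at once: from $u\in A_1$ and $w,y\in B_1$ we get that $u$ is adjacent to neither $w$ nor $y$, from $w\in A_2$ and $y\in B_2$ that $w$ is not adjacent to $y$, and together with the hypothesis $xy\notin\E{G}$ this shows $u,w,y$ are pairwise distinct and pairwise non-adjacent. For the connecting paths I would use that each of $A_1,B_1,A_2,B_2$ is connected: an $x$--$u$ path inside $A_1$ avoids $N[y]$ (since $N[y]\subseteq B_1\cup S_1$ is disjoint from $A_1$) and an $x$--$w$ path inside $A_2$ avoids $N[y]$ (since $N[y]\subseteq B_2\cup S_2$); concatenating them at $x\notin N[y]$ yields a $u$--$w$ path in $G-N[y]$. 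Symmetrically, a $y$--$u$ path inside $B_2$ avoids $N[w]$ and a $y$--$w$ path inside $B_1$ avoids $N[u]$. Thus each pair of $\{u,w,y\}$ is joined by a path avoiding the closed neighbourhood of the third vertex, so $\{u,w,y\}$ is an asteroidal triple, the desired contradiction.

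I expect the only delicate point to be the bookkeeping in the first paragraph: one must use \cref{G-S two cc} to guarantee that each $G-S_i$ has \emph{exactly} two components and that $U$ and $W$ fall into different ones, since it is precisely the assumption of two opposite preferences that forces the ``crossing'' configuration placing $x,u,w,y$ into four distinct corners. Once this placement is fixed there is no genuine combinatorial difficulty: the whole argument reduces to the uniform fact that a closed neighbourhood never crosses a separator, applied for the non-adjacencies and for the paths. It is worth noting that the argument needs no property of $G$ beyond \cref{G-S two cc} and AT-freeness, both of which are available for $2$-moplex graphs, and does not use that $U$ and $W$ are simplicial.
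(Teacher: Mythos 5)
Your proposal is correct and follows essentially the same route as the paper: assume two separators give opposite preferences, use \cref{G-S two cc} of \cref{M2 moplexes properties} to pin down the ``crossing'' placement of $x$, $y$, $U$, $W$, and derive an asteroidal triple contradicting \cref{graphs in M2 are AT-free}. The only (immaterial) difference is that you exhibit the triple $\{u,w,y\}$ and concatenate the two paths at $x$, whereas the paper uses the symmetric triple $\{x,u,w\}$ and concatenates at $y$.
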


\begin{proof}
Without loss of generality, assume that $M=U$.
Suppose towards a contradiction that there exist two minimal $x{,}y$-separators $S$ and $S'$
such that $U$ and $x$ lie in the same component of $G-S$, and $U$ and $y$ lie in the same component of $G-S'$.
Then, due to \cref{M2 moplexes properties}, $W$ and $y$ lie in the same component of $G-S$, and $W$ and $x$ lie in the same component of $G-S'$.
Clearly, $x\notin U$, as $x$ and $U$ lie in different components of $G-S'$.
In a similar way we conclude that neither $x$ nor $y$ can belong to $U\cup W$. 
Now fix any $u\in U$ and $w\in W$. It is easy to observe that $\{x,u,w\}$ is an independent set in $G$. 
We conclude the proof by deriving a contradiction with \cref{graphs in M2 are AT-free}. 
To this end, it is enough to show that $\{x,u,w\}$ is an asteroidal triple in $G$. 

The removal of $N[u]$ does not affect the component of $G-S'$ containing both $x$ and $w$. 
Similarly, the removal of $N[w]$ does not affect the component of $G-S$ containing both
$x$ and $u$. Finally, consider the graph $G-N[x]$ and first observe that it contains a $u{,}y$-path, 
as the removal of $N[x]$ does not affect the component of $G-S'$ 
containing $y$ and $u$. Similarly, $G-N[x]$ must also contain a $y{,}w$-path, 
as the removal of $N[x]$ does not affect the component of $G-S$ containing $y$ and $w$.
Thus, the vertices $u$ and $w$ are in the same component of $G-N[x]$ and $\{x,u,w\}$ is an asteroidal triple, as claimed.
\end{proof}

It follows from~\cref{lem:unique-separation} that if for \emph{some} minimal $x{,}y$-separator the vertices $x$ and $y$ belong to components containing the moplexes $U$ and $W$, respectively, then in fact for \emph{every} minimal $x{,}y$-separator $x$ and $y$ belong to components containing the moplexes $U$ and $W$, respectively.
Let $M\in \{U,W\}$.
If $M$ prefers $x$ to $y$ with respect to all $S\in \mathcal{S}(x,y)$, we say that \emph{$M$ prefers $x$ to~$y$}.
Using this we define a binary relation $R_M$ over $V(G)$ as follows:
\begin{align*}
    xR_{M}y & \iff M\text{ prefers }x\text{ to }y.
\end{align*}

By \cref{lem:unique-separation}, relations $R_U$ and $R_W$ are well-defined.
Furthermore, either $U$ prefers $x$ to $y$ or $U$ prefers $y$ to $x$ (in which case $W$ prefers $x$ to $y$).
We thus have $xR_Uy \iff yR_Wx$, that is, $R_{W}=R_{U}^{-1}$.
By definition, $xR_{U}y$ (or $yR_Wx$) implies that $x$ and $y$ are distinct and non-adjacent, and that $\mathcal S(x,y)\neq \emptyset$.  
This in turn implies that
\begin{align}
    U\cap N(y)=\emptyset=W\cap N(x),\label{eq:isolatedMoplex}
\end{align}
and that there are no edges between $U$ and $W$.

\begin{lemma}
\label{lem:transitive}
Let $G$ be a non-complete $2$-moplex graph and denote by $U$ and $W$ its two moplexes.
Then, relations $R_{U}$ and $R_{W}$ are transitive.
\end{lemma}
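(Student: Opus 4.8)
The plan is to prove transitivity of $R_U$ directly; since $R_W = R_U^{-1}$, transitivity of $R_U$ immediately yields transitivity of $R_W$ (the inverse of a transitive relation is transitive). So assume $x R_U y$ and $y R_U z$, meaning $U$ prefers $x$ to $y$ and $U$ prefers $y$ to $z$; I must show $x R_U z$, i.e.\ that $x$ and $z$ are non-adjacent and $U$ prefers $x$ to $z$ with respect to every minimal $x{,}z$-separator. From the hypotheses and~\eqref{eq:isolatedMoplex} I record that $U \cap N(y) = \emptyset$, $W \cap N(x) = \emptyset$, $U \cap N(z) = \emptyset$, $W \cap N(y) = \emptyset$, and there are no edges between $U$ and $W$; in particular $x,y,z \notin U \cup W$.

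First I would establish that $x$ and $z$ are non-adjacent, so that $R_U$ can even relate them. The natural route is to suppose $xz \in E(G)$ and derive a contradiction with \cref{graphs in M2 are AT-free} by exhibiting an asteroidal triple. Fixing $u \in U$ and $w \in W$, I would aim to show that $\{u, w, y\}$ is an independent asteroidal triple: independence follows from the neighbourhood facts above together with the absence of $U$--$W$ edges, and the asteroidal property would be assembled from paths through the components of $G - S$ and $G - S'$, where $S$ is a minimal $x{,}y$-separator and $S'$ a minimal $y{,}z$-separator. The key geometric picture is that $U$ sits with $x$ on one side of $S$ and with $y$ on one side of $S'$, while $W$ sits with $y$ across $S$ and with $z$ across $S'$; an edge $xz$ would let me route a $u$--$w$ path avoiding $N[y]$, contradicting that $y$ separates them.

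Next, assuming $xz \notin E(G)$, I would show $U$ prefers $x$ to $z$. By \cref{lem:unique-separation} it suffices to verify this for one minimal $x{,}z$-separator $S''$; equivalently, I must rule out the possibility that $U$ prefers $z$ to $x$. Suppose for contradiction that $U$ prefers $z$ to $x$, so $U$ (hence $x \notin U$, with $U$ on $z$'s side) and $W$ prefers $x$ to $z$. I would again try to build an asteroidal triple contradicting \cref{graphs in M2 are AT-free}. Combining the three separators $S$ (for $x,y$), $S'$ (for $y,z$), and $S''$ (for $x,z$) with the stated preferences pins down, for $u \in U$ and $w \in W$, how $u$ and $w$ lie relative to each of $x, y, z$; the triple $\{x, y, z\}$ (or a triple involving $u$ or $w$) should then turn out to be asteroidal, because the three ``prefers'' relations force each pair to be connectable while avoiding the closed neighbourhood of the third vertex.

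The main obstacle I anticipate is the bookkeeping in the asteroidal-triple construction: I must consistently track, across as many as three different minimal separators, which of $x, y, z, u, w$ lands in which of the two components guaranteed by \cref{G-S two cc} of \cref{M2 moplexes properties}, and then stitch together the required connecting paths in the various $G - N[\cdot]$ graphs without accidentally passing through the forbidden closed neighbourhood. The cleanest line of attack is probably to fix one representative $u \in U$ and $w \in W$ at the outset and to phrase every connectivity claim as ``the removal of $N[\,\cdot\,]$ does not affect the relevant $S$-component,'' exactly as in the proof of \cref{lem:unique-separation}, so that each path is obtained for free from the fact that a minimal separator leaves both sides as $S$-full connected components. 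Once that framework is set up, each of the two contradictions should reduce to verifying three disjointness conditions and three connectivity conditions, which I would check mechanically.
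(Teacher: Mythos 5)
Your first half is the paper's argument: you rule out the edge $xz$ by exhibiting the asteroidal triple $\{u,w,y\}$, where the hypothetical edge $xz$ is what bridges the $u\leadsto x$ path and the $w\leadsto z$ path inside $G-N[y]$, contradicting \cref{graphs in M2 are AT-free}. That part is correct.

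The second half has a genuine gap. After establishing $xz\notin E(G)$ and assuming for contradiction that $U$ prefers $z$ to $x$, you propose to again produce an asteroidal triple by ``mechanically'' verifying three connectivity conditions. But each such condition asks you to join a path ending on the $U$-side of some minimal separator to a path ending on the $W$-side. For $\{u,w,y\}$ you would need $u$ and $w$ in one component of $G-N[y]$; the preference relations only give $u\leadsto x$ and $w\leadsto z$ in $G-N[y]$, and the bridge that closed this gap in the first half was precisely the edge $xz$ --- which you have just shown does not exist. Worse, the assumed cyclic preferences place $x$ with $W$ and $z$ with $U$ on \emph{opposite} sides of every minimal $x{,}z$-separator, so no substitute bridge is forthcoming; the same obstruction hits the triple $\{x,y,z\}$ in each of $G-N[x]$, $G-N[y]$, $G-N[z]$. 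Nothing in your sketch supplies these connections, and I do not believe they follow from the ``prefers'' data alone.

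The paper closes this half by a different, non-asteroidal argument, which is what you should use. Since $\{x,y,z\}$ is independent, choose a minimal $x{,}z$-separator $S$ with $y\notin S$ (e.g.\ $S\subseteq N(x)$). By \cref{G-S two cc} of \cref{M2 moplexes properties}, the vertex $y$ lies in one of the two components of $G-S$, so $S$ separates $y$ from $x$ or from $z$ and hence contains a minimal separator $S'\subseteq S$ for that pair. Since every component of $G-S$ is contained in a component of $G-S'$, the component of $G-S$ holding $z$ and $U$ (respectively, $x$ and $W$) survives into $G-S'$ away from $y$, forcing $zR_Uy$ (respectively, $xR_Wy$); either conclusion contradicts one of the hypotheses $xR_Uy$, $yR_Uz$ via $R_W=R_U^{-1}$ and \cref{lem:unique-separation}. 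This replaces the path-stitching you were hoping to do.
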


\begin{proof}
Let $x,y,z$ be vertices such that $U$ prefers $x$ to $y$ as well as $y$ to $z$, which can be written as $xR_{U}y$ and $zR_{W}y$.
Together with \cref{eq:isolatedMoplex} and the fact that there are no edges between $U$ and $W$, we infer that $\{u,w,y\}$ is an independent set for all $u\in U$ and $w\in W$.
We first show that if $xz\in E(G)$, then $\{u,w,y\}$ must be an asteroidal triple.

Notice that $yR_{U}z$ implies that the removal of $N[w]$ preserves a $u,y$-path. 
Similarly, $xR_{U}y$ implies that $yR_{W}x$, and thus the removal of $N[u]$ preserves a $y,w$-path.
The graph $G-N[y]$ contains a $u,x$-path, since $xR_{U}y$, and, similarly, $G-N[y]$ contains a $z,w$-path, since $zR_{W}y$.
Suppose $xz\in E(G)$, then it connects both paths to a $u{,}w$-path in $G-N[y]$, yielding that  $\{u,w,y\}$ is an asteroidal triple.
However, according to \cref{graphs in M2 are AT-free}, $2$-moplex graphs are AT-free, a contradiction.
Thus, $\{x,y,z\}$ is an independent set.

To prove the transitivity of $R_U$, it suffices to show that $xR_Uz$. 
Note first that vertices $x$ and $z$ are distinct, since otherwise 
conditions $xR_Uy$ and $yR_Ux$ would hold simultaneously.
Suppose for a contradiction that $U$ does not prefer $x$ to $z$.
Then, since $x$ and $z$ are non-adjacent, $U$ prefers $z$ to $x$. 
Furthermore, since $\{x,y,z\}$ is an independent set, there exists a minimal $x{,}z$-separator $S$ such that $y\not\in S$.
Recall that $U$ and $W$ lie in the same component of $G-S$ as $z$ and $x$, respectively. 
Now notice that since $S$ separates $y$ from either $x$ or $z$,
it follows that some $S'\subseteq S$ is either a minimal $x{,}y$-separator, or a minimal $y{,}z$-separator.

Suppose first that $S'$ is a minimal $x{,}y$-separator.
As $zR_{U}x$ and $S'\subseteq S$, the vertex $x$ remains in the same component together with $W$ in $G-S'$, and thus $xR_{W}y$. 
But then $xR_{U}y$ violates the fact that $R_{W}=R_{U}^{-1}$, a contradiction. 
The case when $S'$ is a minimal $y{,}z$-separator is similar.
This concludes the proof that $xR_{U}y$ and $yR_{U}z$ indeed imply $xR_{U}z$. 
Therefore, the relation $R_U$ is transitive.
By symmetry, so is $R_W$.
\end{proof}

We remark that $R_U$ is a strict partial order on the vertices of $G$. Furthermore, since $R_U$ is an orientation of the edges of the complement of $G$, \cref{lem:transitive} implies the main result of this section.
 
\begin{proposition}\label{M2 is cocomp}
Every $2$-moplex graph is a cocomparability graph.
\end{proposition}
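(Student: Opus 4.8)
The plan is to reduce the statement to exhibiting a transitive orientation of the complement $\overline{G}$, since by definition this is exactly what certifies that $G$ is a cocomparability graph. The complete case is immediate: if $G$ is complete then $\overline{G}$ has no edges and the empty orientation is vacuously transitive. So I would assume $G$ is non-complete, in which case it has exactly two moplexes $U$ and $W$, and the relation $R_U$ together with its transitivity (\cref{lem:transitive}) is available. The heart of the proof is then to verify that $R_U$ is a genuine, total orientation of the edges of $\overline{G}$; once that is done, \cref{lem:transitive} closes the argument.

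Concretely, every edge of $\overline{G}$ corresponds to a pair of distinct non-adjacent vertices $x,y$ of $G$. For each such pair I would first argue that $\mathcal{S}(x,y)\neq\emptyset$: if $x$ and $y$ lie in a common connected component a minimal $x{,}y$-separator exists (e.g.\ refine $N(x)$), and if they lie in distinct components the empty set is already a minimal $x{,}y$-separator. With $\mathcal{S}(x,y)$ nonempty, \cref{lem:unique-separation} applies and yields that \emph{exactly one} of ``$U$ prefers $x$ to $y$'' and ``$U$ prefers $y$ to $x$'' holds, i.e.\ exactly one of $xR_Uy$ and $yR_Ux$ holds. Hence $R_U$ assigns a single well-defined direction to each edge of $\overline{G}$ and never orients an edge in both directions, so it is an orientation of $\overline{G}$.

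Combining these observations, $R_U$ is an orientation of $E(\overline{G})$ which is transitive by \cref{lem:transitive}; therefore $\overline{G}$ is a comparability graph and $G$ is a cocomparability graph, as required. The substantive work is already discharged by the preceding lemmas, so the main remaining obstacle is a delicate one of \emph{totality} rather than difficulty: I must ensure $R_U$ is defined and decisive on \emph{every} edge of $\overline{G}$, which is precisely why checking $\mathcal{S}(x,y)\neq\emptyset$ for all non-adjacent pairs is essential---without it the ``exactly one'' dichotomy of \cref{lem:unique-separation} could fail to be meaningful---together with the bookkeeping needed to dispose of the degenerate complete (and disconnected) cases.
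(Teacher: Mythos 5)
Your proposal is correct and follows essentially the same route as the paper: orient each edge $\{x,y\}$ of $\overline{G}$ according to $R_U$, invoke \cref{lem:unique-separation} for well-definedness and \cref{lem:transitive} for transitivity, after dispatching the complete case separately. The only difference is that you explicitly verify $\mathcal{S}(x,y)\neq\emptyset$ for every non-adjacent pair (including the disconnected case), a detail the paper leaves implicit; this is a harmless and welcome addition.
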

\begin{proof}
Consider a $2$-moplex graph $G$. If $G$ is a complete graph, then $G$ is cocomparability.
Otherwise, $G$ has exactly two moplexes $U$ and $W$.
Consider the orientation of the edges of the complement of $G$ obtained by orienting each edge $\Set{x,y}\in E(\overline{G})$ from $x$ to $y$ if and only if $xR_Uy$. 
By \cref{lem:unique-separation}, this orientation is well-defined.
Furthermore, by \cref{lem:transitive}, it is a transitive orientation.
Thus, $G$ is a cocomparability graph.
\end{proof}

\Cref{M2 is cocomp} is a strengthening of \cref{graphs in M2 are AT-free}, with algorithmic consequences for the class of \hbox{$2$-moplex} graphs.
First, \textsc{Weighted Independent Set} is solvable in linear time in the class of cocomparability graphs~\cite{MR3466620}, and thus also in the class of $2$-moplex graphs. In the more general class of AT-free graphs, this problem is only known to be solvable in time $\mathcal{O}(|V(G)|^3)$~\cite{EkkiThesis}.
Furthermore, \Cref{M2 is cocomp} implies that the class of $2$-moplex graphs is a subclass of the class of perfect graphs, and thus \textsc{Clique} (and its weighted generalisation),  \textsc{Clique Cover}, and \textsc{Colouring} are all solvable in polynomial time in the class of $2$-moplex graphs~\cite{MR936633}.
Note that this conclusion cannot be derived from \cref{graphs in M2 are AT-free}: since \textsc{Independent Set} and \textsc{Colouring} are \NP-hard in the class of $C_3$-free graphs (see~\cite{MR2024261,MR1905637}), \textsc{Clique} and \textsc{Clique Cover} are \NP-hard in the class of $3K_1$-free graphs (and thus in the more general class of AT-free graphs), while the complexity of \textsc{Colouring} is still open in the class of AT-free graphs (see~\cite{MR1686810,kratsch2012colouring}).

As we show next, the result of \cref{M2 is cocomp} is best possible.

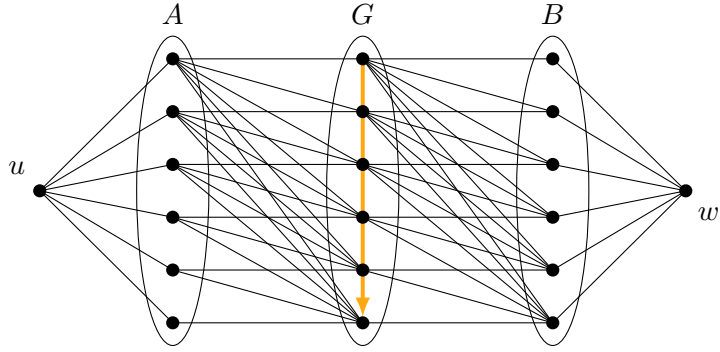
\begin{figure}[ht]
	\centering
	\begin{tikzpicture}
		\pgfdeclarelayer{background}
		\pgfdeclarelayer{foreground}
		
		\pgfsetlayers{background,main,foreground}
		
		\tikzstyle{vertex} = [draw, circle, scale=0.4, thick,fill]
		\tikzstyle{Nonvertex}=[circle,
		inner sep=0pt,
		text width=2.5mm,
		align=center,
		draw=black,
		transform shape,
		fill=white]
		\def\vert{0.7}
		\def\hor{2.5}
		\foreach \i in {1,...,6}
		{
			\node[vertex] (v-\i) at ($(0,-\i*\vert)$) {};
			\node[vertex] (a-\i) at ($(-\hor,-\i*\vert)$) {};
			\node[vertex] (b-\i) at ($(\hor,-\i*\vert)$) {};
		}
		\node[vertex] (u) at ($(-1.7*\hor,-3.5*\vert)$) {};
		\node[vertex] (w) at ($(1.7*\hor,-3.5*\vert)$) {};
		
		\node (u-label) at ($(u)+(-0.3,0.3)$) {$u$};
		\node (w-label) at ($(w)+(0.3,-0.3)$) {$w$};
		
		\begin{pgfonlayer}{background}
			\node[Nonvertex,ellipse,minimum width=0.95cm,text height=2.9cm] at ($(-\hor,-3.5*\vert)$) {};
			\node[Nonvertex,ellipse,minimum width=0.95cm,text height=2.9cm] at ($(0,-3.5*\vert)$) {};
			\node[Nonvertex,ellipse,minimum width=0.95cm,text height=2.9cm] at ($(\hor,-3.5*\vert)$) {};
		\end{pgfonlayer}
		
		\foreach \i in {1,...,6}
		{
			\foreach \j in {1,...,6}
			{
				\ifthenelse{\i < \j \OR \equal{\i}{\j}}{
					\draw (a-\i) -- (v-\j);
					\draw (v-\i) -- (b-\j);
				}{}
			}
			\draw (u) -- (a-\i);
			\draw (w) -- (b-\i);
		}
	
		\begin{pgfonlayer}{background}
			\draw[myOrange,->,-latex,line width=1.5pt] (v-1) -- (v-6);
		\end{pgfonlayer}
	
		\node (A) at ($(a-1)-(0,-0.6)$) {$A$};
		\node (B) at ($(b-1)-(0,-0.6)$) {$B$};
		\node (G) at ($(v-1)-(0,-0.6)$) {$G$};
	\end{tikzpicture}
	\caption{The graph $G'$ constructed from the graph $G$ whose vertices are arranged according to a cocomparability ordering.}
	\label{fig:cocomp_subgraph}
\end{figure}

\begin{proposition}
\label{thm:cocomp-are-induced-in-M2}
Every cocomparability graph is an induced subgraph of some connected $2$-moplex graph.
\end{proposition}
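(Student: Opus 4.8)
The plan is to take the cocomparability graph $G$, fix an umbrella-free vertex ordering $v_1 <_\sigma \cdots <_\sigma v_n$ of $G$ (which exists by \cref{thm:umbrella}), and explicitly build the connected $2$-moplex graph $G'$ depicted in \cref{fig:cocomp_subgraph}. Concretely, I would adjoin two new cliques $A = \{a_1, \dots, a_n\}$ and $B = \{b_1, \dots, b_n\}$ together with two further vertices $u$ and $w$, making $u$ complete to $A$ and $w$ complete to $B$, joining $a_i$ to $v_j$ exactly when $i \le j$, and joining $v_i$ to $b_j$ exactly when $i \le j$; no edges are added inside $V(G)$, between $A$ and $B$, or between $\{u,w\}$ and $V(G)$. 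Since no edge is added inside $V(G)$, the set $V(G)$ induces exactly $G$ in $G'$, so the induced-subgraph requirement holds for free. Connectedness is equally immediate: $a_1$ is adjacent to all of $V(G)$ and, through the clique $A \cup \{u\}$, links $u$ to everything, and symmetrically $b_n$ links $w$.

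The heart of the argument is to show that $G'$ has exactly the two moplexes $\{u\}$ and $\{w\}$. First I would record the elementary fact that distinct maximal clique modules are disjoint, so each vertex lies in a unique maximal clique module and being \emph{moplicial} simply means that this module is a moplex; one then checks that each of $\{u\}$, $\{w\}$, $\{a_i\}$, $\{b_j\}$, $\{v_k\}$ is its own maximal clique module, any merge across types being blocked by $u$, by $w$, or by the pairwise distinct $A$-neighbourhoods of the $v_k$. Verifying that $\{u\}$ is a moplex is direct: $N(u)=A$, and in $G'-A$ the vertex $u$ is isolated while the remainder forms one component dominating $A$ (each $a_i$ sees $v_i$), so $A$ is a minimal separator; the case of $\{w\}$ is symmetric. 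For the vertices of $A$ I would exploit that $u \in N(a_i)$ while $N(u)=A \subseteq N[a_i]$: deleting $N(a_i)$ leaves $u$ with no neighbour outside the isolated component $\{a_i\}$, so $G'-N(a_i)$ admits no second $N(a_i)$-full component, $N(a_i)$ is not a minimal separator, and $\{a_i\}$ is not a moplex; the vertices of $B$ are handled symmetrically via $w$.

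The main obstacle, and the only place where the cocomparability hypothesis enters, is ruling out the vertices of $V(G)$ as moplicial. Here I would analyse $G'-N(v_k)$, where $N(v_k)=N_G(v_k)\cup\{a_1,\dots,a_k\}\cup\{b_k,\dots,b_n\}$. The key observation is that the surviving $G$-vertices split by index: every surviving $v_j$ with $j>k$ attaches to $a_{k+1}$ and hence lies in the component $C_A$ containing $u$, while every surviving $v_i$ with $i<k$ attaches to $b_{k-1}$ and lies in the component $C_B$ containing $w$; moreover $A$-vertices never meet $B$-vertices, the low $v_i$ never meet $A$, and the high $v_j$ never meet $B$. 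Consequently, apart from the isolated component $\{v_k\}$, the only component that can dominate $a_k \in N(v_k)$ is $C_A$ and the only one that can dominate $b_k \in N(v_k)$ is $C_B$. A second $N(v_k)$-full component would therefore have to equal both, forcing $C_A=C_B$, i.e.\ a path between $u$ and $w$ in $G'-N(v_k)$; since such a path cannot use an $A$--$B$ edge, it must contain an edge $v_i v_j \in E(G)$ with a surviving low endpoint $i<k$ and surviving high endpoint $j>k$. As $v_i$ and $v_j$ survive, they are non-adjacent to $v_k$, so $v_i <_\sigma v_k <_\sigma v_j$ with $v_i v_k, v_k v_j \notin E(G)$ and $v_i v_j \in E(G)$ form an umbrella in $\sigma$, contradicting \cref{thm:umbrella}. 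Hence no $v_k$ is moplicial, $G'$ has exactly the moplexes $\{u\}$ and $\{w\}$, and $G'$ is the desired connected $2$-moplex graph containing $G$ as an induced subgraph.
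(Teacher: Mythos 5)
Your proposal is correct and follows essentially the same route as the paper: the construction of $G'$ is identical, the singleton-moplex reduction via distinct closed neighbourhoods, the $u$-blocks-$a_i$ argument, and the use of umbrella-freeness to forbid edges between surviving low- and high-index $G$-vertices in $G'-N[v_k]$ all match the paper's proof. The only (cosmetic) difference is that the paper exhibits the two components of $G'-N[v_k]$ explicitly and checks each fails to dominate $b_k$ resp.\ $a_k$, whereas you argue that a second full component would force those two components to merge.
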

\begin{proof}
Let $G$ be a cocomparability graph. 
By \cref{thm:umbrella}, $G$ has an umbrella-free ordering $\sigma$.
Let us write $\sigma = \Brace{v_1,\ldots, v_n}$ where $i<j$ if and only if 
$v_i <_{\sigma} v_j$.
Consider the graph $G'$ obtained from $G$ as follows (see \cref{fig:cocomp_subgraph} for an illustration of the construction):
\begin{itemize}
\item add a set $A = \{a_1,\ldots, a_n\}$ of vertices and a vertex $u$ such that 
$A\cup \{u\}$ is a clique;
\item add a set $B = \{b_1,\ldots, b_n\}$ of vertices and a vertex $w$ such that $B\cup \{w\}$ is a clique;
\item for all $i,j\in \{1,\ldots, n\}$ such that $i\le j$, 
add an edge from $a_i$ to $v_j$ 
and an edge from $v_i$ to $b_j$.
\end{itemize}
Clearly, $G$ is an induced subgraph of $G'$ and $G'$ is connected. 
It is not difficult to see that $U = \{u\}$ and $W = \{w\}$ are moplexes in $G'$.
To complete the proof, it suffices to show that $G'$ has no other moplexes.
Since no two distinct vertices of $G'$ have the same closed neighbourhoods, every moplex in $G'$ consists of a single vertex. 

We show that for all $v\in V(G')\setminus\{u,w\}$, the set $\{v\}$ is not a moplex in $G'$.
By symmetry, we may assume that $v\in \{a_i,v_i\}$ for some $i\in \{1,\ldots, n\}$.
Suppose first that $v = a_i$. 
Since $u$ is a neighbour of $a_i$ not adjacent to any vertex in $G'-N[a_i]$, no component of $G'-N[a_i]$ can be $N(a_i)$-full.
Thus, $\{a_i\}$ is not a moplex.
Suppose now that $v = v_i$.
To show that $\{v_i\}$ is not a moplex, we need to verify that no component of $G'-N[v_i]$ is $N(v_i)$-full.
Consider the vertex
sets $$X = \{u\}\cup \{a_j\mid i< j\le n\} \cup (\{v_j\mid i< j\le n\}\setminus N(v_i))$$ and
$$Y = (\{v_j\mid 1\le j<i\}\setminus N(v_i))\cup \{b_j\mid 1\le j<i\}\cup\{w\}\,,$$ and let $C$ and $D$ denote the subgraphs of $G'-N[v_i]$ induced by $X$ and $Y$, respectively. Then $C$ and $D$ are connected. Note that $G'$ contains no edges from a vertex in $A\cup \{u\}$ to a vertex in 
$B\cup \{w\}$ and also no edge from a vertex in 
$\{v_j\mid 1\le j<i\}\setminus N(v_i)$
to a vertex in 
$\{v_j\mid i< j\le n\}\setminus N(v_i)$, since $(v_1,\ldots, v_n)$ is an umbrella-free ordering of $G$.
It follows that the graph $G'-N[v_i]$ has exactly two components, namely $C$ and $D$. By construction, $C$ contains no vertex adjacent to $b_i$ and 
$D$ contains no vertex adjacent to $a_i$. Since $a_i$ and $b_i$ are adjacent to $v_i$, we infer that $\{v_i\}$ is not a moplex.
Therefore, $G'$ is connected $2$-moplex, as claimed.
\end{proof}

Let us now explain how the above results establish that both $\moplexGraphs^{+}$ and $\moplexGraphs_c^{+}$ coincide with the class of cocomparability graphs, as stated in \cref{eq:subset-seq}.

\begin{corollary}
$\moplexGraphs^{+}=\moplexGraphs_c^{+}=\mathcal{C}$, where $\mathcal{C}$ is the class of cocomparability graphs.
\end{corollary}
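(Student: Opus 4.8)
The plan is to sandwich both $\moplexGraphs^{+}$ and $\moplexGraphs_c^{+}$ between $\mathcal{C}$ and itself, using the two propositions just proved together with two elementary closure properties of the cocomparability class. The first property is heredity: if $G\in\mathcal{C}$ and $\overline{G}$ carries a transitive orientation, then for any induced subgraph $H$ of $G$ the complement $\overline{H}$ is an induced subgraph of $\overline{G}$, so restricting the orientation to the edges of $\overline{H}$ yields a transitive orientation, whence $H\in\mathcal{C}$. The second property is closure under disjoint union, which I would verify through \cref{thm:umbrella}: taking umbrella-free orderings $\sigma_1,\sigma_2$ of two components and forming their concatenation $\sigma$, any candidate umbrella $x<_{\sigma}y<_{\sigma}z$ with $xz\in E(G)$ would force $x$ and $z$ into the same component (there are no cross-component edges), and within a component the restriction of $\sigma$ is the umbrella-free $\sigma_i$; hence $\sigma$ is umbrella-free and the disjoint union lies in $\mathcal{C}$. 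Iterating gives closure under arbitrary finite disjoint unions.

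With these in hand, I would first settle $\moplexGraphs^{+}=\mathcal{C}$. By \cref{M2 is cocomp} we have $\moplexGraphs\subseteq\mathcal{C}$; since $\mathcal{C}$ is hereditary and $\moplexGraphs^{+}$ is by definition the smallest hereditary class containing $\moplexGraphs$, this immediately yields $\moplexGraphs^{+}\subseteq\mathcal{C}$. For the reverse inclusion, \cref{thm:cocomp-are-induced-in-M2} shows that every cocomparability graph is an induced subgraph of some (connected) $2$-moplex graph, and hence belongs to $\moplexGraphs^{+}$; thus $\mathcal{C}\subseteq\moplexGraphs^{+}$, and equality follows.

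For $\moplexGraphs_c^{+}=\mathcal{C}$, one inclusion comes for free: since $\moplexGraphs^{+}\subseteq\moplexGraphs_c^{+}$ (already noted from the definitions), we get $\mathcal{C}=\moplexGraphs^{+}\subseteq\moplexGraphs_c^{+}$. For the reverse inclusion I would argue $\moplexGraphs_c\subseteq\mathcal{C}$ and then invoke heredity of $\mathcal{C}$ once more. Indeed, a graph in $\moplexGraphs_c$ is a disjoint union of $2$-moplex graphs, each of which is a cocomparability graph by \cref{M2 is cocomp}; by closure of $\mathcal{C}$ under disjoint union the whole graph lies in $\mathcal{C}$, so $\moplexGraphs_c\subseteq\mathcal{C}$. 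As $\mathcal{C}$ is hereditary and $\moplexGraphs_c^{+}$ is the smallest hereditary class containing $\moplexGraphs_c$, this forces $\moplexGraphs_c^{+}\subseteq\mathcal{C}$, completing the equality. The only step carrying genuine content beyond assembling \cref{M2 is cocomp,thm:cocomp-are-induced-in-M2} is the closure of cocomparability graphs under disjoint union; I expect the concatenation argument above to be the most delicate point, while the remainder is bookkeeping with the smallest-hereditary-class definitions.
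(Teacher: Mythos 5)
Your proof is correct and follows essentially the same route as the paper: both rely on \cref{M2 is cocomp} and \cref{thm:cocomp-are-induced-in-M2} together with heredity and closure under disjoint union of cocomparability graphs (the latter via \cref{thm:umbrella}). The only cosmetic difference is that the paper deduces $\moplexGraphs^{+}\subseteq\mathcal{C}$ from the single inclusion $\moplexGraphs_c^{+}\subseteq\mathcal{C}$ using $\moplexGraphs^{+}\subseteq\moplexGraphs_c^{+}$, whereas you verify both containments directly and spell out the closure properties explicitly.
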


\begin{proof}
Since $\moplexGraphs^{+}\subseteq\moplexGraphs_c^{+}$, it suffices to show that $\mathcal{C}\subseteq \moplexGraphs^{+}$ and $\moplexGraphs_c^{+}\subseteq \mathcal{C}$.

The inclusion $\mathcal{C}\subseteq \moplexGraphs^{+}$ is an immediate consequence of \Cref{thm:cocomp-are-induced-in-M2}.
For the inclusion $\moplexGraphs_c^{+}\subseteq \mathcal{C}$, consider an arbitrary graph $G$ in $\moplexGraphs_c^{+}$.
Then there exists a graph $G'$ such that every connected component of $G'$ is a $2$-moplex graph and $G$ is an induced subgraph of $G'$.
By \Cref{M2 is cocomp}, every connected component of $G'$ is a cocomparability graph.
\Cref{thm:umbrella} implies that the class of cocomparability graphs is closed under disjoint union.
It follows that $G'$ is a cocomparability graph, and, since the class of  cocomparability graphs is hereditary, so is $G$.
We conclude that $\moplexGraphs_c^{+}\subseteq \mathcal{C}$, as desired.
\end{proof}

In particular, we have the following result announced in the introduction.

\CocompIsMinimalHereditarySuperclass*

\section{Hardness results}
\label{sec:compl}

As seen in the last section, the class of $2$-moplex graphs is a proper subclass of the class of cocomparability graphs.
In this section we show that two classical problems, namely \textsc{Max-Cut} and \textsc{Graph Isomorphism}, remain as hard on the class of $2$-moplex graphs as they are on cocomparability graphs.

\subsection{Hardness of Max-Cut on 2-moplex graphs}

A \emph{cut} $\Brace{Z_1,Z_2}$ of a graph $G$ is an ordered partition of $V(G)$ into two parts, $Z_1$ and $Z_2$. 
An edge of $G$ is a \emph{cut edge} if its endpoints are in different parts of the cut, that is, if one endpoint is in $Z_1$ and the other in $Z_2$.
The \emph{size} of a cut corresponds to the number of cut edges.

The \textsc{Max-Cut} problem is defined as follows.

\problemdef[max-cut-def]{Max-Cut}{An undirected graph $G$ and $k \in \N$.}{Does $G$ contain a cut of size at least $k$?}

Recall that we have established that the class of connected $2$-moplex graphs is sandwiched between the classes of connected proper interval and cocomparability graphs.

It is known that \textsc{Max-Cut} is \NP-complete on cobipartite graphs \cite{MR1769833}, and thus on cocomparability graphs. 
Interestingly, the complexity of \textsc{Max-Cut} is still open on proper interval graphs~\cite{bodlaender2004simple,MR4340841,MR4287014,DBLP:conf/mfcs/FigueiredoMO021}.

We show that the problem remains \NP-complete on cobipartite graphs with only two moplexes. 
The hardness reduction is based on the following construction  (see also \cref{fig:maxcut}).

\begin{construction}\label{cobipartite to A_2}
    Let $G = (A \cup B, E)$ be a cobipartite graph such that $A$ and $B$ are disjoint cliques.
    We define the graph $G'$ obtained from $G$ as follows:
    \begin{itemize}
        \item add a set $A'$ containing $|A|$ vertices and a vertex $u$ such that $\{u\} \cup A \cup A'$ is a clique;
        \item add a set $B'$ containing $|B|$ vertices and a vertex $w$ such that $\{w\} \cup B \cup B'$ is a clique;
        \item fix a vertex $a^* \in A'$ and connect it to every vertex in $B \cup B'$;
        \item fix a vertex $b^* \in B'$ and connect it to every vertex in $A \cup A'$.
    \end{itemize}
\end{construction}

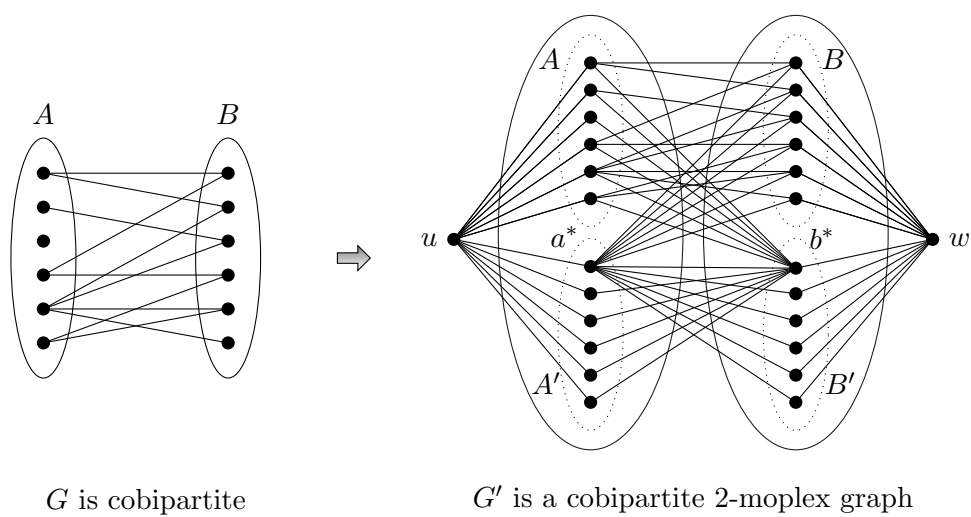
\begin{figure}[thb]
	\centering
	\usetikzlibrary{shapes,backgrounds}
\begin{tikzpicture}[scale=0.9]
\tikzstyle{Nonvertex}=[circle,
				inner sep=0pt,
				text width=2.5mm,
				align=center,
				draw=black,
				transform shape,
				fill=white]
\tikzstyle{vertex} = [draw, circle, scale=0.4, thick,fill]
\useasboundingbox (-1,-4) rectangle (14.5,3.7);
\begin{scope}
    \node[Nonvertex,rectangle,draw=none,fill=none,text width=3cm,label={$G$ is cobipartite}] at (1.5,-3.95) {};
    \begin{scope}
    	\node[Nonvertex,ellipse,minimum width=0.95cm,text height=2.5cm] at (0,0) {};
    	\node[Nonvertex,draw=none,fill=none,rectangle,text width=2cm,label={\mathstrut{$A$}}] at (0,1.7) {};
    	\node[vertex] (a2) at (0,1.25) {};
    	\node[vertex] (a3) at (0,.75) {};
    	\node[vertex] (a4) at (0,.25) {};
    	\node[vertex] (a5) at (0,-.25) {};
    	\node[vertex] (a6) at (0,-.75) {};
    	\node[vertex] (a7) at (0,-1.25) {};
    \end{scope}
    \begin{scope}[xshift=2.7cm]
    	\node[Nonvertex,ellipse,minimum width=0.95cm,text height=2.5cm] at (0,0) {};
    	\node[Nonvertex,draw=none,fill=none,rectangle,text width=2cm,label={\mathstrut{$B$}}] at (0,1.7) {};
    	\node[vertex] (b2) at (0,1.25) {};
    	\node[vertex] (b3) at (0,.75) {};
    	\node[vertex] (b4) at (0,.25) {};
    	\node[vertex] (b5) at (0,-.25) {};
    	\node[vertex] (b6) at (0,-.75) {};
    	\node[vertex] (b7) at (0,-1.25) {};
    \end{scope}
    \draw (a2) -- (b2);
    \draw (a2) -- (b3);
    \draw (a6) -- (b3);
    \draw (a3) -- (b4);
    \draw (a5) -- (b5);
    \draw (a5) -- (b2);
    \draw (a6) -- (b4);
    \draw (a6) -- (b6);
    \draw (a6) -- (b7);
    \draw (a7) -- (b5);
    \draw (a7) -- (b6);
\end{scope}
\node[Nonvertex,
    single arrow,                  %
    single arrow head extend=3pt,  %
    draw,                          %
    inner sep=2pt,                 %
    top color=gray,               %
    bottom color=white,               %
  ] at (4.5,0) {};%
\begin{scope}[xshift=8cm,yshift=1.875cm,yscale=0.8]
    \node[Nonvertex,rectangle,draw=none,fill=none,text width=3cm,label={$G'$ is a cobipartite \mbox{$2$-moplex} graph}] at (1.5,-7.25) {};
    \draw[color=gray!20!black] (0,-1.875) ellipse (1.35 and 4);
    \draw[color=gray!20!black] (3,-1.875) ellipse (1.35 and 4);
    \node[vertex,label=left:{$u$}] (u) at (-2,-2) {};
    \begin{scope}
    	\node[Nonvertex,ellipse,dotted,minimum width=0.95cm,text height=2.5cm] at (0,0) {};
    	\node[Nonvertex,draw=none,fill=none,rectangle,text width=2cm,label={\mathstrut{$A$}}] at (-0.6,0.8) {};
    	\node[vertex] (a2) at (0,1.25) {};
    	\node[vertex] (a3) at (0,.75) {};
    	\node[vertex] (a4) at (0,.25) {};
    	\node[vertex] (a5) at (0,-.25) {};
    	\node[vertex] (a6) at (0,-.75) {};
    	\node[vertex] (a7) at (0,-1.25) {};
    \end{scope}
    \begin{scope}[xshift=3cm]
        \node[vertex,label=right:{$w$}] (v) at (2,-2) {};
    	\node[Nonvertex,ellipse,dotted,minimum width=0.95cm,text height=2.5cm] at (0,0) {};
    	\node[Nonvertex,draw=none,fill=none,rectangle,text width=2cm,label={\mathstrut{$B$}}] at (0.55,0.8) {};
    	\node[vertex] (b2) at (0,1.25) {};
    	\node[vertex] (b3) at (0,.75) {};
    	\node[vertex] (b4) at (0,.25) {};
    	\node[vertex] (b5) at (0,-.25) {};
    	\node[vertex] (b6) at (0,-.75) {};
    	\node[vertex] (b7) at (0,-1.25) {};
    \end{scope}
    \draw (a2) -- (b2);
    \draw (a2) -- (b3);
    \draw (a6) -- (b3);
    \draw (a3) -- (b4);
    \draw (a5) -- (b5);
    \draw (a5) -- (b2);
    \draw (a6) -- (b4);
    \draw (a6) -- (b6);
    \draw (a6) -- (b7);
    \draw (a7) -- (b5);
    \draw (a7) -- (b6);
    \foreach \z in {2,3,4,5,6,7}{
        \draw (u) -- (a\z);
        \draw (v) -- (b\z);
    }
    \begin{scope}[yshift=-3.75cm]
        \begin{scope}
    	\node[Nonvertex,ellipse,dotted,minimum width=0.95cm,text height=2.5cm] at (0,0) {};
    	\node[Nonvertex,draw=none,fill=none,rectangle,text width=2cm,label={\mathstrut{$A'$}}] at (-0.65,-1.45) {};
        \node[vertex,label={[label distance=1.5pt]100:{$a^*$}}] (a'2) at (0,1.25) {};
    	\node[vertex] (a'3) at (0,.75) {};
    	\node[vertex] (a'4) at (0,.25) {};
    	\node[vertex] (a'5) at (0,-.25) {};
    	\node[vertex] (a'6) at (0,-.75) {};
    	\node[vertex] (a'7) at (0,-1.25) {};
    \end{scope}
    \begin{scope}[xshift=3cm]
        \node[Nonvertex,ellipse,dotted,minimum width=0.95cm,text height=2.5cm] at (0,0) {};
        \node[Nonvertex,draw=none,fill=none,rectangle,text width=2cm,label={\mathstrut{$B'$}}] at (0.65,-1.45) {};
    	\node[vertex,label={[label distance=1.4pt]70:{$b^*$}}] (b'2) at (0,1.22) {};
    	\node[vertex] (b'3) at (0,.75) {};
    	\node[vertex] (b'4) at (0,.25) {};
    	\node[vertex] (b'5) at (0,-.25) {};
    	\node[vertex] (b'6) at (0,-.75) {};
    	\node[vertex] (b'7) at (0,-1.25) {};
    \end{scope}
    \foreach \z in {2,3,4,5,6,7}{
        \draw (a'2) -- (b\z);
        \draw (b'2) -- (a\z);
        \draw (a'2) -- (b'\z);
        \ifnum \z = 2 %
        \else
        \draw (b'2) -- (a'\z);
        \fi
        \draw (u) -- (a\z);
        \draw (v) -- (b\z);
        \draw (u) -- (a'\z);
        \draw (v) -- (b'\z);
        }
    \end{scope}
\end{scope}
\end{tikzpicture}
	\caption{An example of \cref{cobipartite to A_2} used to prove \cref{thmMaxCutNPComplete}. The ellipses represent cliques.}\label{fig:maxcut}
\end{figure}

\begin{lemma}\label{G' is in A2}
    Let $G = (A \cup B, E)$ be a cobipartite graph such that $A$ and $B$ are disjoint cliques and $G'$ be the graph obtained from $G$ by \cref{cobipartite to A_2}.
    Then $G'$ is a cobipartite graph with exactly two moplexes.
\end{lemma}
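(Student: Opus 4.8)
The plan is to verify the three assertions in turn: that $G'$ is cobipartite, that $\{u\}$ and $\{w\}$ are moplexes, and that $G'$ has no further moplex. The whole argument will hinge on the fact that $u$ and $w$ behave like apex vertices over the two clique-sides, namely $N(u)=A\cup A'$ and $N(w)=B\cup B'$, while the two distinguished vertices satisfy $N[a^*]=V(G')\setminus\{w\}$ and $N[b^*]=V(G')\setminus\{u\}$.

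First I would record the cobipartite structure. Setting $X=\{u\}\cup A\cup A'$ and $Y=\{w\}\cup B\cup B'$, both sets are cliques by construction and they partition $V(G')$, so $\overline{G'}$ is bipartite. To see that $\{u\}$ is a moplex, I would argue it is a maximal clique module: since every vertex of $A\cup A'$ is adjacent to $b^*$ whereas $u$ is not, no vertex other than $u$ has closed neighbourhood $N[u]=X$, and hence, as the vertices of any clique module share a common closed neighbourhood, the maximal clique module containing $u$ is $\{u\}$ itself. Then I would check that $N(u)=A\cup A'$ is a minimal $u{,}w$-separator: deleting it from $G'$ leaves exactly two components, the isolated vertex $u$ and the clique $Y$, each of which dominates $A\cup A'$ (the component $\{u\}$ because $X$ is a clique, and $Y$ because $a^*$ is adjacent to all of $B\cup B'\subseteq Y$ and $b^*\in Y$ is adjacent to all of $A\cup A'$). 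Hence $\{u\}$ is a moplex, and $\{w\}$ is one by the symmetric argument.

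It then remains to show there is no other moplex; let $M$ be a moplex with $M\neq\{u\},\{w\}$. If $u\in M$, then $M\subseteq N[u]=X$ since $M$ is a clique, and for any $a\in M\setminus\{u\}$ the module condition applied to $b^*$ (which sees $a\in A\cup A'$ but not $u$) yields a contradiction; thus $u\in M$ forces $M=\{u\}$, and symmetrically for $w$. So I may assume $M\subseteq A\cup A'\cup B\cup B'$. If $M$ meets both $A\cup A'$ and $B\cup B'$, then $u$ is adjacent to the former part but to none of the latter, so $u\notin M$ is adjacent to some but not all of $M$, contradicting that $M$ is a module. Finally, if $M\subseteq A\cup A'$ (the case $M\subseteq B\cup B'$ being symmetric via $w$), then, $A\cup A'$ being a clique, we have $A\cup A'\subseteq N[M]$ and $u\in N(M)$ with $N(u)=A\cup A'\subseteq N[M]$; consequently $u$ has no neighbour in $G'-N[M]$, so no component of $G'-N[M]$ is $N(M)$-full, and $N(M)$ is a nonempty set that is not a minimal separator. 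Thus $M$ is not a moplex, which finishes the case analysis.

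The main obstacle I anticipate is precisely this last step: turning ``$\{u\}$ and $\{w\}$ are the only moplexes'' into a clean argument rather than an ad hoc enumeration. The key simplifying idea is exactly the apex behaviour of $u$ and $w$, which lets a single vertex simultaneously \emph{(i)} forbid any module spanning both clique-sides and \emph{(ii)} block the $N(M)$-full component required by the minimal-separator condition when $M$ lies entirely on one side; once this is isolated, each case collapses to one line. I would also double-check the degenerate situations where $A$ or $B$ is small, but these do not affect the neighbourhood computations used above.
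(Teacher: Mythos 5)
Your proof is correct and follows essentially the same route as the paper's: establish the cobipartition $X=\{u\}\cup A\cup A'$, $Y=\{w\}\cup B\cup B'$, verify that $\{u\}$ and $\{w\}$ are moplexes, and then use the containment $N[u]\subseteq N[a]$ for $a\in A\cup A'$ (and symmetrically for $w$) to show that no component of $G'-N[M]$ can be $N(M)$-full for any other candidate moplex $M$. The only difference is presentational: the paper argues vertex-by-vertex that no $a\in A\cup A'$ is moplicial, whereas you case-split on the candidate module $M$ itself (and spell out the verification that $\{u\}$ is a moplex, which the paper leaves as ``easily observed''), but the key apex observation is identical.
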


\begin{proof}
    First, notice that the sets $A \cup A' \cup \{u\}$ and $B \cup B' \cup \{w\}$ are cliques and form a partition of the vertex set of $G'$.
    Thus, $G'$ is cobipartite.
    Furthermore, it is easily observed that $\Set{u}$ and $\Set{w}$ are simplicial moplexes.
    Now, fix a vertex $a \in A \cup A'$.
    First, observe that $N[u] \neq N[a]$, since $a$ is adjacent to $b^*$ but $u$ is not.
    Furthermore, since $u$ is a neighbour of $a$ and $N[u] \subseteq N[a]$, no component of $G' - N[a]$ can be $N(a)$-full.
    Hence, $a$ does not belong to any moplex.
    A similar argument can be used to show that every vertex $b \in B \cup B'$ does not belong to any moplex.
    Thus, $G$ contains exactly two moplexes.
\end{proof}

\MaxCutNPComplete*
\begin{proof}
    Clearly, the problem is in \NP.
    To prove \NP-hardness, we make a reduction from \textsc{Max-Cut} on cobipartite graphs, which is \NP-hard~\cite{MR1769833}. Let $G = (A \cup B, E)$ be a cobipartite graph such that $A$ and $B$ are disjoint cliques.
    Let $G'$ be the graph obtained from $G$ through \cref{cobipartite to A_2}. Note that $G'$ can be obtained in polynomial time.
    From \cref{G' is in A2}, we have that $G'$ is a cobipartite graph with exactly two moplexes.
    We complete the proof by showing that there exists a cut of size at least $k$ in $G$ if and only if there exists a cut of size at least $(|A|+1)^2+(|B|+1)^2 + k$ in $G'$.
    
    First, assume we have a cut $\Brace{Z_1,Z_2}$ of $G$ of size at least $k \geq 1$.
    Without loss of generality, we assume that $|A \cap Z_1| \geq 1$ and $|B \cap Z_2| \geq 1$.
    Indeed, if, say, $A \cap Z_1 = \emptyset$, then $Z_1\subseteq B$ and $A\subseteq Z_2$, in which case we can achieve the desired inequalities by swapping the roles of $Z_1$ and $Z_2$.
    We define a cut $\Brace{Z'_1,Z'_2}$ of $G'$ as follows.
    The first step is to choose a subset $A'_1 \subseteq A'\setminus\Set{a^*}$ such that $\Abs{\Brace{Z_1\cap A}\cup\Set{a^*}\cup A'_1} = \Abs{A}+1$ and a subset $B'_2 \subseteq B'\setminus\Set{b^*}$ such that $\Abs{\Brace{Z_2\cap B}\cup\Set{b^*}\cup B'_2} = \Abs{B}+1$.
    Then, we define $Z'_1$ and $Z_2'$ as follows:
    \begin{align*}
    Z_1' &= Z_1 \cup \Set{a^*,w} \cup A'_1 \cup \left(B'\setminus (B'_2 \cup \{b^*\})\right)\, ,\\
    Z'_2 &= Z_2 \cup \Set{b^*,u} \cup B'_2 \cup \left(A'\setminus (A'_1 \cup \{a^*\})\right)\, .
    \end{align*}
    This gives us that $|(A \cup A') \cap Z_1'| = |A| + 1$. 
    Using also the facts that $|A| = |A'|$, and that $Z_1$ and $Z_2$ form a partition of $V(G)$, we infer that $|(A \cup A') \cap Z_2'| = |A| - 1$.
    Similarly, we obtain that $|(B \cup B') \cap Z_1'| = |B| - 1$ and consequently $|(B \cup B') \cap Z_2'| = |B| + 1$.
    Now, we reason about the size of the cut $\Brace{Z'_1,Z'_2}$.
    Consider the set of cut edges with both endpoints in $A \cup A' \cup \{u, b^*\}$.
    It contains $2\Brace{\Abs{A}+1}$ edges between $A \cup A'$ and $\Set{u,b^*}$ and, since $A \cup A'$ is a clique, $\Brace{\Abs{A}+1} \Brace{\Abs{A}-1}$ edges within $A \cup A'$.
    Thus, there are exactly $2\Brace{\Abs{A}+1}+\Brace{\Abs{A}+1}\Brace{\Abs{A}-1}= (|A|+1)^2$ cut edges whose both endpoints lie in $A \cup A' \cup \{u, b^*\}$.
    Similarly, there are $(|B|+1)^2$ cut edges whose both endpoints lie in $B \cup B' \cup \{w, a^*\}$.
    Hence, adding the (at least $k$) cut edges between $A$ and $B$, we obtain that $\Brace{Z'_1,Z'_2}$ is a cut of size at least $(|A|+1)^2+(|B|+1)^2 + k$ in $G'$.
    
    Second, assume that we have a cut $\Brace{Z'_1,Z'_2}$ of $G'$ of size at least $(|A|+1)^2+(|B|+1)^2 + k$.
    Consider the cut $\Brace{Z_1,Z_2}$ of $G$ defined by $Z_1 = Z_1' \cap (A \cup B)$ and $Z_2 = Z_2' \cap (A \cup B)$. There are three possible cases:
    \begin{enumerate}
        \item\label[case]{u and b* in W1} $u, b^* \in Z_1'$;
        \item\label[case]{u in W1 and b* in W2} $u \in Z_1'$ and $b^* \in Z_2'$, or $u \in Z_2'$ and $b^* \in Z_1'$;
        \item\label[case]{u and b* in W2} $u, b^* \in Z_2'$.
    \end{enumerate}
    Consider the subgraph $H_A'$ of $G'$ induced by $ A \cup A' \cup \{u, b^*\}$ and let $p = |(A \cup A') \cap Z_1'|-|A|$;
    intuitively, $p$ captures the difference between how much $Z_1'$ intersects $A\cup A'$, and the size of the original set $A$.    
    Note that $|(A \cup A') \cap Z_1'| = |A|+p$ and $|(A \cup A') \cap Z_2'| = |A|-p$.
    In \cref{u and b* in W1}, the number of cut edges in $H_A'$ is exactly $(|A|+p+2)(|A|-p) = (|A|+1)^2-(p+1)^2$.
    In \cref{u in W1 and b* in W2}, the number of cut edges in $H_A'$ is exactly  $(|A|+p)(|A|-p) + (|A|+p) + (|A| - p) = (|A|+1)^2-(p^2+1)$.
    And finally, in \cref{u and b* in W2}, the number of cut edges in $H_A'$ is exactly $(|A|+p)(|A|-p+2) = (|A|+1)^2-(p-1)^2$.
    Thus, at most $(|A|+1)^2$ cut edges can exist in $H_A'$.
    A similar approach shows that the number of cut edges in the subgraph induced by the vertices in $B \cup B' \cup \{a^*,w\}$ is bounded by $(|B|+1)^2$.
    By assumption, the size of $\Brace{Z'_1,Z'_2}$ is at least $(|A|+1)^2+(|B|+1)^2 + k$, which implies that the other $k$ cut edges must be between the sets $A$ and $B$, and thus $\Brace{Z_1,Z_2}$ is a cut of size at least $k$ in $G$.
\end{proof}

\subsection{Hardness of Graph Isomorphism on 2-moplex graphs}

The \textsc{Graph Isomorphism} problem is defined as follows.

\problemdef[GI-def]{Graph Isomorphism}{Two graphs $G_1$ and $G_2$.}{Are $G_1$ and $G_2$ isomorphic to each other?}

The \textsc{Graph Isomorphism} problem is solvable in linear time in the class of interval graphs~\cite{MR528025}. Since the problem is \GI-complete on bipartite graphs \cite{uehara2005graph}, it is also \GI-complete on cobipartite graphs, and thus on cocomparability graphs. 
Using a reduction from the \textsc{Graph Isomorphism} problem in the class of bipartite graphs, we show that the problem remains hard on cobipartite graphs with at most two moplexes.

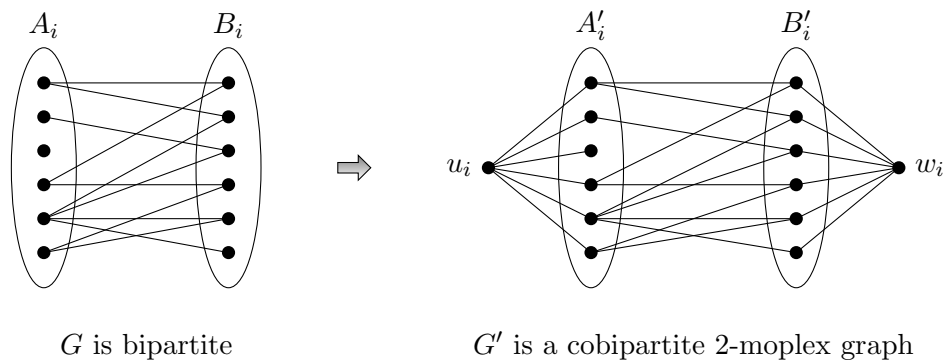
\begin{figure}[thb]
	\centering
	\usetikzlibrary{shapes,backgrounds}
\begin{tikzpicture}[scale=0.9]
\tikzstyle{Nonvertex}=[circle,
				inner sep=0pt,
				text width=2.5mm,
				align=center,
				draw=black,
				transform shape,
				fill=white]
\tikzstyle{vertex} = [draw, circle, scale=0.4, thick,fill]
\begin{scope}
    \node[Nonvertex,rectangle,draw=none,fill=none,text width=3cm,label={$G$ is bipartite}] at (1.5,-3) {};
    \begin{scope}
    	\node[Nonvertex,ellipse,minimum width=0.95cm,text height=2.5cm] at (0,0) {};
    	\node[Nonvertex,draw=none,fill=none,rectangle,text width=2cm,label={\mathstrut{$A_i$}}] at (0,1.7) {};
    	\node[vertex] (a2) at (0,1.25) {};
    	\node[vertex] (a3) at (0,.75) {};
    	\node[vertex] (a4) at (0,.25) {};
    	\node[vertex] (a5) at (0,-.25) {};
    	\node[vertex] (a6) at (0,-.75) {};
    	\node[vertex] (a7) at (0,-1.25) {};
    \end{scope}
    \begin{scope}[xshift=2.7cm]
    	\node[Nonvertex,ellipse,minimum width=0.95cm,text height=2.5cm] at (0,0) {};
    	\node[Nonvertex,draw=none,fill=none,rectangle,text width=2cm,label={\mathstrut{$B_i$}}] at (0,1.7) {};
    	\node[vertex] (b2) at (0,1.25) {};
    	\node[vertex] (b3) at (0,.75) {};
    	\node[vertex] (b4) at (0,.25) {};
    	\node[vertex] (b5) at (0,-.25) {};
    	\node[vertex] (b6) at (0,-.75) {};
    	\node[vertex] (b7) at (0,-1.25) {};
    \end{scope}
    \draw (a2) -- (b2);
    \draw (a2) -- (b3);
    \draw (a6) -- (b3);
    \draw (a3) -- (b4);
    \draw (a5) -- (b5);
    \draw (a5) -- (b2);
    \draw (a6) -- (b4);
    \draw (a6) -- (b6);
    \draw (a6) -- (b7);
    \draw (a7) -- (b5);
    \draw (a7) -- (b6);
\end{scope}
 \node[Nonvertex,
    single arrow,                  
    single arrow head extend=3pt,  
    draw,                          
    inner sep=2pt,                 
    top color=gray,               
    bottom color=white,               
  ] at (4.5,0) {};%
\begin{scope}[xshift=8cm]
    \node[Nonvertex,rectangle,draw=none,fill=none,text width=3cm,label={$G'$ is a cobipartite \mbox{$2$-moplex} graph}] at (1.5,-3) {};
    \node[vertex,label=left:{$u_i$}] (u) at (-1.5,0) {};
    \begin{scope}
    	\node[Nonvertex,ellipse,minimum width=0.95cm,text height=2.5cm] at (0,0) {};
    	\node[Nonvertex,draw=none,fill=none,rectangle,text width=2cm,label={\mathstrut{$A'_i$}}] at (0,1.7) {};
    	\node[vertex] (a2) at (0,1.25) {};
    	\node[vertex] (a3) at (0,.75) {};
    	\node[vertex] (a4) at (0,.25) {};
    	\node[vertex] (a5) at (0,-.25) {};
    	\node[vertex] (a6) at (0,-.75) {};
    	\node[vertex] (a7) at (0,-1.25) {};
    \end{scope}
    \begin{scope}[xshift=3cm]
        \node[vertex,label=right:{$w_i$}] (v) at (1.5,0) {};
    	\node[Nonvertex,ellipse,minimum width=0.95cm,text height=2.5cm] at (0,0) {};
    	\node[Nonvertex,draw=none,fill=none,rectangle,text width=2cm,label={\mathstrut{$B'_i$}}] at (0,1.7) {};
    	\node[vertex] (b2) at (0,1.25) {};
    	\node[vertex] (b3) at (0,.75) {};
    	\node[vertex] (b4) at (0,.25) {};
    	\node[vertex] (b5) at (0,-.25) {};
    	\node[vertex] (b6) at (0,-.75) {};
    	\node[vertex] (b7) at (0,-1.25) {};
    \end{scope}
    \draw (a2) -- (b2);
    \draw (a2) -- (b3);
    \draw (a6) -- (b3);
    \draw (a3) -- (b4);
    \draw (a5) -- (b5);
    \draw (a5) -- (b2);
    \draw (a6) -- (b4);
    \draw (a6) -- (b6);
    \draw (a6) -- (b7);
    \draw (a7) -- (b5);
    \draw (a7) -- (b6);
    \foreach \z in {2,3,4,5,6,7}{
        \draw (u) -- (a\z);
        \draw (v) -- (b\z);
    }
    \end{scope}
\end{tikzpicture}
	\caption{An example for the construction used to prove \cref{thmGraphIsomorphismGIComplete}. The ellipses represent independent sets in $G$ and cliques in $G'$.}\label{fig:GI-constr}
\end{figure}

\GraphIsomorphismGIComplete*
\begin{proof}
    We reduce from the isomorphism problem on connected bipartite graphs, which is known to be \GI-complete \cite{uehara2005graph}; note that the authors only claim it for bipartite graphs but the construction ensures that the obtained graph is connected.
    
    Let $G_1,G_2$ be connected bipartite graphs with colour classes $A_1$ and $B_1$, $A_2$ and $B_2$ respectively.
    We construct two graphs $G'_1$ and $G'_2$ as follows.
    The vertex set of $G'_i$ for $i \in \Set{1,2}$ consists of the vertex set of $G_i$ together with two extra vertices $u_i$ and $w_i$. 
    Each of the sets $A_i\cup\{u_i\}$ and $B_i\cup \{w_i\}$ forms a clique in $G'_i$, for $i \in \Set{1,2}$. Furthermore, we add an edge to $G'_i$ between $x\in A_i$ and $y\in B_i$ if and only if $x$ and $y$ are adjacent in $G_i$.
    See \cref{fig:GI-constr} for an illustration of this construction.

    The obtained graphs are clearly cobipartite.
    We prove that they are also $2$-moplex graphs.
    In each of the obtained graphs the two new vertices $u_i$ and $w_i$ form simplicial moplexes (of size one). 
    So what remains to show is that there are no additional moplexes in the constructed graphs.
    Let $a \in A_i$ and suppose it belongs to a moplex $M$ in $G_i'$.
    Since $G_i$ is connected, $a$ has a neighbour in $B_i$, which implies $N[a]\neq N[u_i]$, thus we have that $u_i \notin M$.
    Also, $N(M)$ is a minimal $x,y$-separator for two non-adjacent vertices $x,y \in \V{G'_i}$.
    Since $a \in A_i\cap M$, we have $A_i\cup\{u_i\} \subseteq N[M]$, thus $\Set{x,y} \subseteq B_i \cup \Set{w_i}$.
    However, $B_i \cup \Set{w_i}$ is a clique in $G_i'$, contradicting the fact that $x$ and $y$ are non-adjacent in $G_i'$. By similar arguments no vertex of $B_i$ is moplicial.

    To complete the proof, we show that there is an isomorphism $f: G_1 \to G_2$ if and only if there is an isomorphism $f': G'_1 \to G'_2$.
    Assume there is an isomorphism $f : G_1 \to G_2$. 
    Since $G_1$ and $G_2$ are connected, we must have either $f(A_1) = A_2$ (and then $f(B_1) = B_2)$ or $f(A_1) = B_2$ (and then $f(B_1) = A_2$).
    We extend $f$ to an isomorphism $f': G'_1 \to G'_2$ by 
    setting 
  $$(f'(u_1),f'(w_1)) = \left\{
  \begin{array}{ll} 
  (u_2,w_2),&\hbox{if $f(A_1)=A_2$;}\\
  (w_2,u_2),&\hbox{if $f(A_1) = B_2$.}
  \end{array}
\right.$$

    Now, assume there is an isomorphism $f' : G'_1 \to G'_2$.
    First note that $u_i$ and $w_i$ are the only simplicial vertices in $G'_1$, and thus we have either $f'(u_1) = u_2$ and $f'(w_1) = w_2$, or $f'(u_1) = w_2$ and $f'(w_1) = u_2$.
    This immediately implies that their neighbourhoods are also mapped to each other:
    $f'(A_1) = A_2$ and $f'(B_1) = B_2$, or $f'(A_1) = B_2$ and $f'(B_1) = A_2$.
    Since $f'$ is an isomorphism, for every $x,y \in A_1 \cup B_1$, we have $\Set{x,y} \in \E{G'_1}$ if and only if $\Set{f'(x),f'(y)} \in \E{G'_2}$.
    Using also the fact that $A_i$ and $B_i$ are cliques in $G_i'$ and independent sets in $G_i$, we infer that %
     the restriction of $f'$ to $V(G_1)$ 
     is an isomorphism between $G_1$ and~$G_2$.
\end{proof} 

We note that the proofs of \cref{thmGraphIsomorphismGIComplete,thmMaxCutNPComplete} also imply stronger statements, namely that \textsc{Max-Cut} is \NP-complete and \textsc{Graph Isomorphism} is \GI-complete even for cobipartite graphs with at most $2$ avoidable vertices (recalling~\cref{moplicial implies avoidable}).

\section{Traceability of \texorpdfstring{$2$}{2}-moplex graphs}\label{sec:traceable}
\label{sec:traceability}

A graph is \emph{traceable} if it contains a Hamiltonian path.
It is well-known that every connected proper interval graph is traceable~\cite{MR731128}.
In this section, we generalise this result by proving \cref{thmMtwoHamiltonicity}, i.e.\ that every connected $2$-moplex graph is traceable.

An important ingredient in this section is \emph{Lexicographic Depth First Search} (LDFS), which was introduced in 2008 by Corneil and Krueger~\cite{corneil2008unified}.
Algorithmically, LDFS can be seen as a special case of Depth First Search (DFS) with a ``lexicographic'' tie-breaking rule. 
For our purposes, we use the corresponding (and equivalent) vertex ordering concept (see~\cite[Theorem 2.7]{corneil2008unified}).

\begin{definition}
\label{def:ldfs}
An ordering $\sigma$ of $G$ is an \emph{LDFS ordering} if the following holds: if $ a <_\sigma b <_\sigma c$ and $ac \in E(G)$ and $ab \notin E(G)$, then there exists a vertex $d$ such that $a<_\sigma d <_\sigma b$ and $db \in E(G)$ and $dc\notin E(G)$. 
\end{definition}

A \emph{DFS ordering} of a connected graph $G$ is any ordering of $V(G)$ obtained by a depth first search.
Note that any LDFS ordering is a DFS ordering.
Recall the following properties of any DFS ordering, which we use throughout this section.

\begin{fact}\label{fact neighbours vi}
Let $G$ be a connected graph and let $(v_1,v_2,\dots,v_n)$ be a DFS ordering of $G$.
Then for any $i\in \{2,\dots,n\}$ there exists $j<i$ such that $v_iv_j\in E(G)$.
Also, if $v_iv_{i+1}\notin E(G)$ for some $i\in \{1,2,\dots,n-1\}$, then $i\ge 2$ and $v_iv_{j}\notin E(G)$ for any $j>i$. 
\end{fact}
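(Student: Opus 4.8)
The plan is to argue directly from the depth-first search that produces the ordering $(v_1,v_2,\dots,v_n)$, relying on the defining ``depth-first'' behaviour that, upon discovering a vertex, the search immediately scans that vertex's incident edges before backtracking to earlier vertices. Fix such a search started at $v_1$, and recall that the vertices are listed in the order in which they are first discovered; in particular, at the moment $v_i$ is discovered, the set of already-discovered vertices is exactly $\{v_1,\dots,v_i\}$.

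For the first claim, I would observe that every $v_i$ with $i\ge 2$ is discovered by following an edge from an already-discovered vertex (its parent in the DFS tree), which necessarily precedes $v_i$ in the ordering; this yields a neighbour $v_j$ with $j<i$. Connectivity of $G$ guarantees that all vertices are eventually discovered, so this applies to every such $i$.

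For the second claim, I would first dispose of the base case: if $v_1v_2\notin E(G)$, then since $v_1$ is the only vertex preceding $v_2$, this contradicts the first claim applied to $i=2$; hence $i\ge 2$ whenever $v_iv_{i+1}\notin E(G)$. The core step is then the following dichotomy for the vertex discovered immediately after $v_i$: if, at the moment $v_i$ is discovered, $v_i$ still has an undiscovered neighbour, then the first such neighbour encountered in $v_i$'s adjacency scan is the very next vertex discovered, namely $v_{i+1}$, and it is adjacent to $v_i$; otherwise the search backtracks from $v_i$ and $v_{i+1}$ is discovered from a strict ancestor of $v_i$. Consequently $v_iv_{i+1}\notin E(G)$ forces the second alternative, so $v_i$ had no undiscovered neighbour when it was discovered. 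Since the discovered vertices at that instant are exactly $\{v_1,\dots,v_i\}$, every neighbour of $v_i$ lies in $\{v_1,\dots,v_{i-1}\}$, and therefore $v_iv_j\notin E(G)$ for all $j>i$, as required.

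The only point needing care, and thus the main (modest) obstacle, is justifying this dichotomy, i.e.\ that the next discovered vertex is adjacent to $v_i$ precisely when $v_i$ still has an undiscovered neighbour upon discovery. This is exactly the depth-first property distinguishing DFS from, say, breadth-first search: the scan of $v_i$'s neighbourhood discovers no new vertex until it recurses into $v_i$'s first undiscovered neighbour, so any discovery happening before backtracking is a neighbour of $v_i$. I would base the argument on the operational definition of a DFS ordering given above; as a shortcut one could instead invoke the standard four-point characterisation of DFS orderings (satisfied by every LDFS ordering), which reduces the second claim to the trivial observation that no vertex lies strictly between the consecutive vertices $v_i$ and $v_{i+1}$.
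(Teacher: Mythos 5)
Your proof is correct. Note that the paper does not actually prove this statement: it is presented as a recalled, well-known property of DFS orderings (with the operational definition ``any ordering obtained by a depth first search''), so there is no proof of record to compare against. Your argument is the standard one and it is sound: the parent-edge observation gives the first claim (using connectedness only to guarantee all vertices are discovered), and the dichotomy at the moment $v_i$ is discovered---either some neighbour of $v_i$ is still undiscovered, in which case the adjacency scan recurses into one of them and $v_{i+1}$ is adjacent to $v_i$, or all neighbours of $v_i$ already lie in $\{v_1,\dots,v_{i-1}\}$---correctly yields the second claim. Your suggested shortcut via the three-vertex characterisation of DFS orderings (taking $a=v_i$, $b=v_{i+1}$, $c=v_j$ and noting no vertex lies strictly between $v_i$ and $v_{i+1}$) is also valid and is arguably closer in spirit to how the paper treats vertex orderings elsewhere.
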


Recall that the goal of this section is to prove that every connected $2$-moplex graph is traceable.
Our approach relies on the fact that every $2$-moplex graph is a cocomparability graph (as proved in \cref{M2 is cocomp}).
A result of K\"ohler and Mouatadid~\cite{MR3238908} states that every cocomparability graph admits an umbrella-free LDFS ordering.
It is thus natural to consider properties of orderings that are both LDFS and umbrella-free.
In this case, the property from \cref{def:ldfs} can be strengthened as follows.

\begin{lemma}[Corneil et~al.~\cite{corneil2013ldfs}]
    Let $\sigma$ be an umbrella-free LDFS ordering of a cocomparability graph $G$ and let $a,b,c$ be such that 
	$a\LDFS b\LDFS c$, and $ac\in E(G)$, and $ab\notin E(G)$.
	Then, there exists a vertex $d$ such that $a\LDFS d\LDFS b$ and $\Set{a,b,c,d}$  induces a $C_{4}$
	in $G$. \label{enu:Three-point-characterization}
\end{lemma}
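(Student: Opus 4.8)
The plan is to pin down exactly which four edges and two non-edges the desired $C_4$ must have, and then certify each of them using the plain LDFS property together with two applications of umbrella-freeness. Since $ab\notin E(G)$ while $ac\in E(G)$, the pair $\{a,b\}$ must be one of the two diagonals of the $C_4$; consequently the other diagonal is $\{c,d\}$, and the four cycle-edges will be $ac$, $ad$, $bc$, $bd$ (so the cyclic order is $a\,c\,b\,d$). Thus it suffices to find a vertex $d$ with $a<_{\sigma}d<_{\sigma}b$ satisfying $ad\in E(G)$, $bd\in E(G)$, and $cd\notin E(G)$, and in addition to verify $bc\in E(G)$.

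First I would apply the basic LDFS property of \cref{def:ldfs} to the triple $a<_{\sigma}b<_{\sigma}c$ (using $ac\in E(G)$ and $ab\notin E(G)$): this yields a vertex $d$ with $a<_{\sigma}d<_{\sigma}b$, $db\in E(G)$, and $dc\notin E(G)$, which already supplies two of the four required incidences. To obtain $ad\in E(G)$, I would invoke umbrella-freeness on the triple $a<_{\sigma}d<_{\sigma}c$ (a legitimate ordering, since $d<_{\sigma}b<_{\sigma}c$): as $dc\notin E(G)$ and $ac\in E(G)$, a non-edge $ad$ would exhibit an umbrella, so $ad\in E(G)$. Finally, to obtain $bc\in E(G)$, I would apply umbrella-freeness to the original triple $a<_{\sigma}b<_{\sigma}c$: since $ab\notin E(G)$ and $ac\in E(G)$, a non-edge $bc$ would again form an umbrella, forcing $bc\in E(G)$.

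Collecting these facts, the edges among $\{a,b,c,d\}$ are exactly $ac$, $ad$, $bc$, $bd$, while $ab$ and $cd$ are non-edges; every one of the four vertices then has degree $2$ in the induced subgraph, so $\{a,b,c,d\}$ induces the $4$-cycle $a\,c\,b\,d$ in $G$, as required. I do not expect a genuine obstacle here: the only real decision is recognising at the outset that $ab$ and $cd$ are the two diagonals, which immediately dictates which edges must be certified. Once this is seen, the plain LDFS property delivers the vertex $d$ together with its two ``$b$-side'' incidences ($db\in E(G)$ and $dc\notin E(G)$), and the two umbrella-free arguments supply the remaining edge on the ``$a$-side'' ($ad$) and the edge $bc$, completing the $C_4$.
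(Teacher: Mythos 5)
Your proof is correct. Note that the paper does not prove this lemma at all---it is quoted as a known result of Corneil, Dusart, and Habib \cite{corneil2013ldfs}---so there is no in-paper argument to compare against; what you have written is a clean, self-contained derivation from the two definitions the paper does provide. Your three steps all check out: the LDFS property applied to the triple $a<_\sigma b<_\sigma c$ produces $d$ with $a<_\sigma d<_\sigma b$, $db\in E(G)$, $dc\notin E(G)$; umbrella-freeness on $a<_\sigma d<_\sigma c$ forces $ad\in E(G)$ (else $ad$, $dc$ non-edges with $ac$ an edge form an umbrella); and umbrella-freeness on $a<_\sigma b<_\sigma c$ forces $bc\in E(G)$. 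Since $a<_\sigma d<_\sigma b<_\sigma c$ guarantees the four vertices are distinct, the edge set $\{ac,ad,bc,bd\}$ with non-edges $ab$ and $cd$ is exactly the induced $C_4$ in cyclic order $a\,c\,b\,d$.
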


The next lemma focuses on the local structure around the non-edges of a connected cocomparability graph $G$ whose endpoints are consecutive in an umbrella-free LDFS ordering.

\begin{lemma}\label{lem:local structure}
Let $G$ be a connected cocomparability graph, let $\sigma  = (v_1,\ldots, v_n)$ be an umbrella-free LDFS ordering of $G$, and let
 $v_iv_{i+1}\notin E(G)$ for some $i\in \{1,\ldots, n-1\}$. Then $v_{i}$ is avoidable in $G$.
\end{lemma}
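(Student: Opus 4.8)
The plan is to show that every extension of $v_i$ lies on an induced $C_4$, which immediately yields avoidability. The whole argument hinges on first locating $v_i$ as the \emph{latest} of the relevant vertices, so that the LDFS machinery of \cref{enu:Three-point-characterization} becomes directly applicable.

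First I would invoke \cref{fact neighbours vi}: since $v_iv_{i+1}\notin E(G)$, the fact gives $i\ge 2$ and $v_iv_j\notin E(G)$ for all $j>i$, so that $N(v_i)\subseteq\{v_1,\dots,v_{i-1}\}$. In other words, every neighbour of $v_i$ precedes it in $\sigma$. This is the crucial observation, as it guarantees that in any induced $P_3$ with midpoint $v_i$ the outer vertices both occur before $v_i$.

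Next, to verify avoidability, I would take an arbitrary extension of $v_i$, i.e.\ an induced $P_3$ of the form $a\,v_i\,b$ with $a,b\in N(v_i)$ and $ab\notin E(G)$. Relabelling if necessary, assume $a<_\sigma b$, so that $a<_\sigma b<_\sigma v_i$ by the previous paragraph. Setting $c=v_i$, the hypotheses of \cref{enu:Three-point-characterization} are met: $a<_\sigma b<_\sigma c$, $ac=av_i\in E(G)$, and $ab\notin E(G)$. The lemma then produces a vertex $d$ with $a<_\sigma d<_\sigma b$ such that $\{a,b,v_i,d\}$ induces a $C_4$. The final step is to read off the shape of this $C_4$: since $ab\notin E(G)$, the pair $\{a,b\}$ is one of the two diagonals of the $4$-cycle, and as the two diagonals partition the four vertices, the other diagonal must be $\{v_i,d\}$, while $av_i$, $v_ib$, $bd$, $da$ are its edges. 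Hence the induced cycle is $a\,v_i\,b\,d\,a$, and it contains the path $a\,v_i\,b$, i.e.\ the chosen extension. Since the extension was arbitrary, every extension of $v_i$ lies on an induced cycle, so $v_i$ is avoidable. (If $v_i$ has no extension at all, then $v_i$ is simplicial and hence trivially avoidable.)

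I expect the only subtlety to be this last step—correctly pinning down that the $C_4$ guaranteed by \cref{enu:Three-point-characterization} has $v_i$ adjacent to both $a$ and $b$ within the cycle (equivalently, that $d$ occupies the position opposite $v_i$). This is forced by the diagonal-partition property of a $4$-cycle, combined with the known non-edge $ab$ and the known edges $av_i$ and $bv_i$, so no genuine difficulty arises. The real content of the proof is the preliminary reduction via \cref{fact neighbours vi} to ``all neighbours precede $v_i$'', which is precisely what permits a clean application of the three-point characterization with $v_i$ in the role of $c$.
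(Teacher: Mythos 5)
Your proof is correct and follows essentially the same route as the paper's: both use \cref{fact neighbours vi} to conclude that all neighbours of $v_i$ precede it in $\sigma$, and then apply \cref{enu:Three-point-characterization} with $v_i$ in the role of $c$ to place any extension of $v_i$ on an induced $C_4$. Your extra care in verifying that the resulting $C_4$ actually contains the path $a\,v_i\,b$ (via the diagonal argument) is a detail the paper leaves implicit, but it is the same argument.
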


\begin{proof}
Let $v_{j}$ and $v_{k}$ be two non-adjacent neighbours of $v_{i}$. 
By \cref{fact neighbours vi}, we know that $v_j \LDFS v_i$ and $v_k \LDFS v_i$.
We may assume without loss of generality that $v_{j}\LDFS v_{k}\LDFS v_{i}$.
By \cref{enu:Three-point-characterization}, there exists a vertex $v_\ell$ such that $\{v_i,v_j,v_k,v_\ell\}$ induces a $C_4$ in $G$.
Thus $v_i$ is avoidable.
\end{proof}

Klaus~\cite{MR1251304} introduced another useful way to break ties in a DFS (or any other vertex ordering) procedure, sometimes referred to as a ``$+$ sweep''.
The idea is to prioritise the greatest eligible element with respect to some given ordering.

\begin{definition}\label{def:plus-sweep}
Let $\tau$ be an arbitrary vertex ordering of a graph $G$ (considering the vertices sorted from smallest to largest). Then $\text{(L)DFS}^{+}(\tau)$ is the (L)DFS ordering of $G$ which is lexicographically maximal w.r.t.\ $\tau$ (where the first vertex has highest significance) among all (L)DFS orderings of $G$.
In particular, $\text{(L)DFS}^{+}(\tau)$ always starts with the last vertex of $\tau$.
We provide an example in \cref{fig:DFSp_example}.
\end{definition}

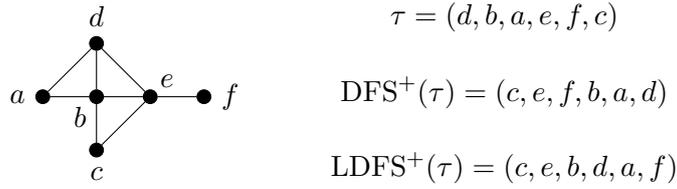
\begin{figure}[ht]
\centering
\begin{tikzpicture}
    \node (left) at (-2,0) {
        \begin{tikzpicture}[rotate=45]
            \tikzstyle{vertex}=[draw,circle,fill,scale=0.5]
            \node[vertex,label=left:{$a$}] (a) at (0,0) {};
            \node[vertex,label={[label distance=-1mm]225:$b$}] (b) at ($(a)+(0.5,-0.5)$) {};
            \node[vertex,label=below:{$c$}] (c) at ($(b)-(0.5,0.5)$) {};
            \node[vertex,label=above:{$d$}] (d) at ($(b)+(0.5,0.5)$) {};
            \node[vertex,label={[label distance=-1mm]45:{$e$}}] (e) at ($(b)+(0.5,-0.5)$) {};
            \node[vertex,label=0:{$f$}] (f) at ($(e)+(0.5,-0.5)$) {};
    
            \draw (a) -- (d);
            \draw (a) -- (b);
            \draw (b) -- (d);
            \draw (e) -- (d);
            \draw (b) -- (c);
            \draw (c) -- (e);
            \draw (e) -- (f);
            \draw (b) -- (e);
        \end{tikzpicture}};
    \node (right) at (3,0) {
        \begin{tikzpicture}
            \node (s1) at (0,0) {$\tau = (d,b,a,e,f,c)$};
            \node (dfsp) at ($(s1)+(0,-1)$) {$\text{DFS}^{+}(\tau) = (c,e,f,b,a,d)$};
            \node (ldfsp) at ($(s1)+(0,-2)$) {$\text{LDFS}^{+}(\tau) = (c,e,b,d,a,f)$};
        \end{tikzpicture}};
\end{tikzpicture}
\caption{A graph and a given ordering $\tau$ together with the orderings $\text{DFS}^{+}(\tau)$ and $\text{LDFS}^{+}(\tau)$.}
\label{fig:DFSp_example}
\end{figure}

We make use of the following important results on LDFS vertex orderings in the class of cocomparability graphs.
\begin{enumerate}[label=(\roman*)]
    \item As already mentioned, K\"ohler and Mouatadid \cite{MR3238908} showed that every cocomparability graph admits an umbrella-free LDFS ordering.
    \item Corneil et al.~\cite{corneil2013ldfs} showed that if $\sigma$ is an umbrella-free ordering of a graph, then so is $\text{LDFS}^{+}(\sigma)$.
    \item Xu and Li \cite{xu2013moplex} showed that every LDFS ordering ends in a moplicial vertex.
\end{enumerate}

These three results imply the following corollary.

\begin{corollary}\label{cor:umbrella-free-LDFS}
    Every cocomparability graph $G$ has an umbrella-free LDFS ordering $\sigma$ such that both first and last vertex are moplicial.
\end{corollary}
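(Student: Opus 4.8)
The plan is to combine the three cited results in sequence, using the ``$+$ sweep'' operation as the device that promotes a moplicial vertex from the end of one ordering to the start of the next.

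First I would invoke the result of K\"ohler and Mouatadid to fix an umbrella-free LDFS ordering $\sigma_0$ of $G$. Since $\sigma_0$ is in particular an LDFS ordering, the result of Xu and Li guarantees that its \emph{last} vertex, say $v$, is moplicial. The key observation is then that running a $+$ sweep on $\sigma_0$ will turn this moplicial vertex into the first vertex of the new ordering: by \cref{def:plus-sweep}, the ordering $\sigma := \mathrm{LDFS}^{+}(\sigma_0)$ always starts with the last vertex of $\sigma_0$, namely $v$. Hence the first vertex of $\sigma$ is moplicial.

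Next I would argue that $\sigma$ inherits both of the properties we need from $\sigma_0$. On the one hand, $\sigma = \mathrm{LDFS}^{+}(\sigma_0)$ is by definition itself an LDFS ordering, so applying the Xu--Li result once more shows that the \emph{last} vertex of $\sigma$ is also moplicial. On the other hand, since $\sigma_0$ is umbrella-free, the result of Corneil et~al.\ (that $\mathrm{LDFS}^{+}$ preserves umbrella-freeness) tells us that $\sigma$ is umbrella-free as well. Putting these together, $\sigma$ is an umbrella-free LDFS ordering whose first and last vertices are both moplicial, which is exactly the claim.

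This proof is essentially a bookkeeping argument, so there is no substantial obstacle once the three ingredients are in place; the only point requiring care is to make sure each cited result is applied to an ordering satisfying its hypotheses. In particular, one must check that $\sigma_0$ is genuinely umbrella-free (needed both to feed into the Corneil et~al.\ preservation result and to ensure $\mathrm{LDFS}^{+}(\sigma_0)$ is defined on an umbrella-free input) and that $\sigma$ is genuinely an LDFS ordering (needed to apply the Xu--Li result to its last vertex). Both facts are immediate from the statements quoted as results~(i)--(iii), so no additional structural work is required.
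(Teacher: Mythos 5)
Your proposal is correct and follows essentially the same route as the paper: fix an umbrella-free LDFS ordering via K\"ohler--Mouatadid, take its $\mathrm{LDFS}^{+}$ sweep, use the Corneil et~al.\ result to preserve umbrella-freeness, and apply Xu--Li to the last vertices of both orderings to get moplicial endpoints. No gaps.
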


\begin{proof}
We first apply the approach of \cite{MR3238908} to obtain an umbrella-free LDFS ordering $\tau$ of $G$ and then we compute $\sigma = \text{LDFS}^{+}(\tau)$.
By~\cite{corneil2013ldfs}, $\sigma$ is an umbrella-free LDFS ordering of $G$. 
Since $\sigma$ starts with the last vertex of $\tau$, it follows from \cite{xu2013moplex} that both first and last vertices of $\sigma$ are moplicial.
\end{proof}

\Cref{moplicial implies avoidable} implies that every graph with at most two avoidable vertices has at most two moplexes.
Using the results developed so far, we can already establish traceability for this special case of $2$-moplex graphs.

\begin{proposition}\label{cor:connected 2avoidable graphs are traceable}
Every connected graph containing at most two avoidable vertices has a Hamiltonian path.
\end{proposition}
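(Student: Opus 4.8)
The plan is to prove that an umbrella-free LDFS ordering of $G$ is \emph{itself} a Hamiltonian path. First I would pass to the cocomparability setting. Since every moplicial vertex is avoidable by \cref{moplicial implies avoidable} and distinct moplexes are vertex-disjoint, a graph with at most two avoidable vertices has at most two moplexes; hence $G$ is a $2$-moplex graph and, by \cref{M2 is cocomp}, a cocomparability graph. I may assume $n := |V(G)| \ge 3$, since for $n \le 2$ connectivity immediately yields a Hamiltonian path.

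Next I would invoke \cref{cor:umbrella-free-LDFS} to obtain an umbrella-free LDFS ordering $\sigma = (v_1,\ldots,v_n)$ whose first and last vertices $v_1$ and $v_n$ are moplicial, and therefore avoidable by \cref{moplicial implies avoidable}. As $n \ge 3$, the vertices $v_1$ and $v_n$ are distinct, so they already account for two avoidable vertices, exhausting the budget allowed by the hypothesis.

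The crux is then a counting argument against \emph{break points}, meaning indices $i \in \{1,\ldots,n-1\}$ with $v_i v_{i+1} \notin E(G)$. By \cref{fact neighbours vi}, any such index satisfies $i \ge 2$, so $i \in \{2,\ldots,n-1\}$ and the vertex $v_i$ is distinct from both $v_1$ and $v_n$. Moreover, \cref{lem:local structure} guarantees that $v_i$ is avoidable. Thus a single break point would furnish a third avoidable vertex, distinct from $v_1$ and $v_n$, contradicting the assumption that $G$ has at most two avoidable vertices. Consequently no break point exists, so $v_i v_{i+1} \in E(G)$ for every $i \in \{1,\ldots,n-1\}$, and $\sigma$ is a Hamiltonian path.

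The main (and essentially only) point requiring care is the bookkeeping of distinctness: one must verify that $v_1$, $v_n$, and any break-point vertex $v_i$ are three genuinely different vertices. This hinges precisely on the fact from \cref{fact neighbours vi} that break points occur only at positions $i \ge 2$, combined with $i \le n-1$; together these place $v_i$ strictly between the two moplicial endpoints. Once this is settled, the argument reduces to a pigeonhole on avoidable vertices and needs no further case analysis.
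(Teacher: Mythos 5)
Your proposal is correct and follows essentially the same route as the paper: reduce to the cocomparability setting via \cref{moplicial implies avoidable} and \cref{M2 is cocomp}, take the umbrella-free LDFS ordering from \cref{cor:umbrella-free-LDFS} with moplicial endpoints, and rule out any consecutive non-edge by producing a third avoidable vertex via \cref{lem:local structure}. The distinctness bookkeeping you highlight (that a break point satisfies $2\le i\le n-1$) is exactly the point the paper relies on as well.
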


\begin{proof}
Let $G$ be a connected graph containing at most two avoidable vertices.
Applying \cref{moplicial implies avoidable,M2 is cocomp}, we infer that $G$ is a cocomparability graph. By \cref{cor:umbrella-free-LDFS}, the graph $G$ has an umbrella-free LDFS ordering $\sigma = (v_1,\ldots, v_n)$ such that vertices $v_1$ and $v_n$ are moplicial.
We claim that $(v_1,\ldots, v_n)$ is a Hamiltonian path in $G$.
Suppose that $v_iv_{i+1}\notin E(G)$ for some $i\in \{1,\ldots, n-1\}$.
Then, by \cref{lem:local structure}, $i\ge 2$ and the vertex $v_i$ is avoidable in $G$.
Since $v_1$ and $v_n$ are moplicial vertices, they are also avoidable, and thus $G$ contains three distinct avoidable vertices, a contradiction. 
\end{proof}

An important ingredient in extending the statement of \cref{cor:connected 2avoidable graphs are traceable} to the whole family of $2$-moplex graphs is the following theorem.

\begin{theorem}[Corneil et~al.~{\cite{corneil2013ldfs}}]
	Let $\sig$ be an umbrella-free LDFS vertex ordering of a cocomparability graph $G$.
	If $G$ admits a Hamiltonian path, then one such path corresponds to $\mathrm{DFS}^{+}(\sig)$.\label{enu:path-cover}
\end{theorem}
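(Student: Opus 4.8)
The plan is to prove the statement directly at the level of vertex orderings: writing $\rho = \mathrm{DFS}^{+}(\sigma) = (w_1,\dots,w_n)$, it suffices to show that if $G$ is traceable then $\rho$ is \emph{itself} a Hamiltonian path, i.e.\ $w_iw_{i+1}\in E(G)$ for every $i\in\{1,\dots,n-1\}$. I would argue by contradiction: assume $G$ has a Hamiltonian path but $\rho$ has a \emph{break}, and let $i$ be the smallest index with $w_iw_{i+1}\notin E(G)$. By minimality the prefix $w_1w_2\cdots w_i$ is a path in $G$, so in the depth first search each $w_{j+1}$ (for $j<i$) was a child of $w_j$, and the search stack at the moment $w_i$ is visited is exactly $(w_1,\dots,w_i)$. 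By \cref{fact neighbours vi}, the break forces $N(w_i)\subseteq\{w_1,\dots,w_{i-1}\}$; in particular $w_i$ has no unvisited neighbour, so the search backtracks and $w_{i+1}$ is a neighbour of the deepest stack vertex $w_q$ (with $q<i$) that still has an unvisited neighbour.

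First I would extract the order information that the ``$+$'' rule of \cref{def:plus-sweep} supplies for free. Since $\rho$ is lexicographically maximal with respect to $\sigma$, at each moment the search descended into the $\sigma$-largest available child; hence every vertex that could have been explored from a stack vertex instead of the child actually chosen is $\sigma$-smaller than that child. Applied along the stack down to $w_i$, this pins down $\sigma$-comparisons between the backtrack choice $w_{i+1}$ and the vertices $w_q,\dots,w_i$, while we also retain the explicit non-edge $w_iw_{i+1}\notin E(G)$ together with the edges joining $w_{i+1}$ and $w_i$ to their respective parents on the stack. This is precisely the configuration to which the umbrella-free LDFS structure applies: I would feed the appropriate triple (an edge together with the break $w_iw_{i+1}$ sharing a common stack neighbour, ordered by $\sigma$) into \cref{enu:Three-point-characterization} to produce induced $C_4$'s, locating for each relevant non-edge a $\sigma$-earlier witness completing a $4$-cycle.

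The core of the argument is an exchange on a fixed Hamiltonian path $P$ of $G$. With respect to the bipartition $(\{w_1,\dots,w_i\},\{w_{i+1},\dots,w_n\})$ of visited versus unvisited vertices, the vertex $w_i$ is \emph{closed off}: it has no edge to the unvisited side, yet $P$ must traverse that side. I would use the $C_4$'s produced above, together with umbrella-freeness (\cref{thm:umbrella}), to reroute $P$ by local swaps of pairs of path edges, each swap pushing an unvisited neighbour of some stack vertex into the position that $\mathrm{DFS}^{+}$ ought to have explored. The objective is to build a DFS ordering that agrees with $\rho$ on $w_1,\dots,w_i$ but either places a $\sigma$-larger vertex than $w_{i+1}$ at position $i{+}1$, or leaves $w_i$ not closed off; either outcome contradicts the lexicographic maximality of $\mathrm{DFS}^{+}(\sigma)$ or the minimality of the break index $i$.

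The hard part will be exactly this rerouting step: converting a global Hamiltonian path into a sequence of local $C_4$-swaps that are compatible with the DFS stack, while controlling the interaction between the $\sigma$-order (in which umbrella-freeness and \cref{enu:Three-point-characterization} operate) and the distinct $\rho$-order produced by the search. Conceptually, this is the Hamiltonian special case of the statement that $\mathrm{DFS}^{+}(\sigma)$ computes a minimum path cover whose parts are the maximal runs of consecutively adjacent vertices of $\rho$; once that is established, traceability (minimum path cover equal to $1$) yields zero breaks and hence a Hamiltonian path at once. I expect the bookkeeping over the possible $\sigma$-positions of $w_q,w_i,w_{i+1}$ and the verification that each swap keeps $P$ Hamiltonian to absorb the bulk of the case analysis.
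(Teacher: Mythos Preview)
The paper does not prove this theorem; it is quoted verbatim from Corneil, Dalton, and Habib~\cite{corneil2013ldfs} and used as a black box in the proof of \cref{thmMtwoHamiltonicity}. There is therefore no ``paper's own proof'' to compare against.

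As for your outline on its own merits: the high-level strategy --- assume a break in $\rho=\mathrm{DFS}^{+}(\sigma)$, analyse the DFS stack at the first break, and use umbrella-freeness together with the $C_4$ lemma to reroute a given Hamiltonian path --- is indeed the architecture of the original proof in~\cite{corneil2013ldfs}. However, what you have written is a plan rather than a proof: the rerouting step you flag as ``the hard part'' is essentially the entire content of the theorem, and your description (``local $C_4$-swaps compatible with the DFS stack'') does not yet pin down which edges of $P$ are swapped, why the result remains a Hamiltonian path, or how the resulting ordering beats $\rho$ lexicographically. In the original, this is handled by a carefully engineered invariant on certified minimum path covers that is maintained through the DFS; a direct ad hoc swap argument at a single break tends to founder because the witness $d$ produced by \cref{enu:Three-point-characterization} need not lie on the correct side of the visited/unvisited bipartition, and several rounds of swapping (with a potential function to guarantee termination) are required. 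If you want a self-contained proof, you will need to make that invariant explicit.
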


We proceed with the proof of the main theorem of this section.

\MtwoHamiltonicity*
\begin{proof}
	Aiming towards a contradiction, fix a connected $2$-moplex graph $G$ of minimum order that does not admit a Hamiltonian path. 
	By \cref{cor:umbrella-free-LDFS,M2 is cocomp}, the graph $G$ has an umbrella-free LDFS ordering $\sig = (v_1,\ldots, v_n)$ such that $v_1$ and $v_n$ are moplicial.
	Note that $n\ge 2$ since otherwise $G$ would have a Hamiltonian path. 

	The minimality of $G$ implies that no two distinct vertices of $G$ have the same closed neighbourhood.
	In particular, every moplex in $G$ is of size one.
	Hence, since $G$ is a \hbox{$2$-moplex} graph, $v_1$ and $v_n$ are the only two moplicial vertices of $G$.
	Since $N[v_1]\neq N[v_n]$, these two vertices belong to distinct moplexes.
	
Note that, as $G$ does not admit a Hamiltonian path, there is at least one vertex that is not adjacent to its immediate successor in $\sigma$.
We next focus on how the removal of any such vertex affects the ordering $\sigma$.
For $i\in \{1,\ldots, n\}$, we denote by $\sig_i$ the ordering of $G-v_i$ obtained from $\sig$ by removing $v_{i}$. 

\begin{claim}\label{lem:adjacent to previous}
    Suppose that $v_iv_{i+1}\notin E(G)$ for some $i\in \{1,\ldots, n-1\}$. 
    Then, $\sig_i$ is an umbrella-free LDFS ordering of $G-v_i$ that retains the property of starting and ending in moplicial vertices.
	Furthermore, we have that $v_{i-1}v_{i}\in E(G)$.
\end{claim}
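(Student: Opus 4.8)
The plan is to treat the two assertions of the claim separately. First I would unpack the hypothesis $v_iv_{i+1}\notin E(G)$. By the second part of \cref{fact neighbours vi} this forces $i\ge 2$ and $N(v_i)\subseteq\{v_1,\dots,v_{i-1}\}$, i.e.\ $v_i$ has no neighbour to its right in $\sig$. By \cref{lem:local structure}, $v_i$ is avoidable in $G$; and since in the present situation the only moplicial vertices are $v_1$ and $v_n$ while $2\le i\le n-1$, the vertex $v_i$ is avoidable but \emph{not} moplicial. \cref{thm:removing_avoidable_vertices} then yields that $G$ and $G-v_i$ have exactly the same moplexes, so $G-v_i$ is again a $2$-moplex graph whose only moplicial vertices are $v_1$ and $v_n$.

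With these facts in hand I would verify the three properties of $\sig_i$. Umbrella-freeness is immediate, since $\sig_i$ is the restriction of $\sig$ and the edge set is unchanged, so any umbrella in $\sig_i$ would already be one in $\sig$. That $\sig_i$ starts in $v_1$ and ends in $v_n$ follows from $2\le i\le n-1$, and both remain moplicial in $G-v_i$ by the previous paragraph. The only substantial point is that $\sig_i$ is still an LDFS ordering, which I would check straight from \cref{def:ldfs}: given $a<_{\sig_i}b<_{\sig_i}c$ with $ac\in E(G)$ and $ab\notin E(G)$, the LDFS property of $\sig$ supplies a witness $d$ with $a<_{\sig}d<_{\sig}b$, $db\in E(G)$ and $dc\notin E(G)$. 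This $d$ cannot be $v_i$: any witness satisfies $d<_{\sig}b$ and $db\in E(G)$, so $d=v_i$ would make $b$ a neighbour of $v_i$ lying to its right, contradicting $N(v_i)\subseteq\{v_1,\dots,v_{i-1}\}$. Hence $d\neq v_i$ survives in $\sig_i$ and witnesses the LDFS condition there. The clean observation driving this part is precisely that $v_i$ has no right-neighbour and therefore can never serve as an LDFS witness.

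For the edge $v_{i-1}v_i\in E(G)$ I would argue by contradiction, assuming $v_{i-1}v_i\notin E(G)$. Applying \cref{fact neighbours vi} at index $i-1$ gives $i\ge 3$ and $N(v_{i-1})\subseteq\{v_1,\dots,v_{i-2}\}$; combined with the above this upgrades $N(v_i)\subseteq\{v_1,\dots,v_{i-2}\}$ as well and makes $\{v_{i-1},v_i,v_{i+1}\}$ an independent set. Using umbrella-freeness of $\sig$ twice, I would show that every neighbour $v_q$ of $v_{i+1}$ with $q<i$ is in fact a common neighbour of all three vertices: for $v_q<_{\sig}v_i<_{\sig}v_{i+1}$ (respectively $v_q<_{\sig}v_{i-1}<_{\sig}v_{i+1}$) the absence of an umbrella forces $v_qv_i\in E(G)$ (respectively $v_qv_{i-1}\in E(G)$). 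The plan is then to promote $\{v_{i-1},v_i,v_{i+1}\}$ to an asteroidal triple, contradicting \cref{graphs in M2 are AT-free}: since $v_{i-1}$ and $v_i$ are non-moplicial with all neighbours in $\{v_1,\dots,v_{i-2}\}$, and $G$ is connected with its second moplicial vertex $v_n$ situated to the right of $v_{i+1}$, one should be able to show that deleting the closed neighbourhood of any one of the three still leaves the other two in a common component.

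The main obstacle is exactly this last connectivity verification. It cannot be carried out using the common neighbour $v_q$ alone, because $v_q$ lies in the closed neighbourhood of all three vertices and is thus destroyed by each of the three deletions; indeed, for the star $K_{1,3}$ the analogous triple of leaves is \emph{not} asteroidal, which also explains why the statement genuinely needs the hypothesis that $v_1$ and $v_n$ are the only moplicial vertices. What I expect to rescue the argument is that $G$ is a \emph{connected} $2$-moplex graph, so the required paths avoiding a closed neighbourhood must exist and run through the portion of $\sig$ beyond $v_{i+1}$ towards $v_n$; making this precise, and treating the boundary case $v_{i+1}=v_n$ separately, is where the real work lies. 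As a fallback I would keep in reserve the observation that an avoidable vertex is never a cut vertex, so that $G-v_i$ is a connected, strictly smaller $2$-moplex graph to which the minimality hypothesis of the surrounding proof applies; one could then attempt to force the contradiction through \cref{enu:path-cover} applied to $\sig_i$ instead of through an asteroidal triple.
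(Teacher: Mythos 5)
Your treatment of the first assertion is correct and essentially identical to the paper's: umbrella-freeness passes to the subsequence, \cref{fact neighbours vi} shows $v_i$ has no neighbour to its right so it can never be the witness $d$ in \cref{def:ldfs}, and \cref{lem:local structure} together with \cref{thm:removing_avoidable_vertices} preserves the moplicial endpoints. No issues there.

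The second assertion is where you have a genuine gap, and you say so yourself. Your plan is to promote $\{v_{i-1},v_i,v_{i+1}\}$ to an asteroidal triple and contradict \cref{graphs in M2 are AT-free}, but the connectivity verification you defer --- that deleting $N[v_{i-1}]$ (or $N[v_i]$, or $N[v_{i+1}]$) leaves the other two vertices in a common component --- is exactly the hard part, and nothing in your setup supplies it: as you note, the common neighbour $v_q$ is useless for this purpose, and there is no reason the required paths ``through the portion of $\sigma$ beyond $v_{i+1}$'' exist; indeed $v_i$ has \emph{no} neighbours beyond position $i-2$ under your hypothesis, so any path out of $v_i$ must first pass through $\{v_1,\dots,v_{i-2}\}$, and controlling which of these survive the three deletions is not addressed. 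The paper's argument is far more direct and avoids asteroidal triples entirely: if $v_{i-1}v_i\notin E(G)$, then umbrella-freeness applied to any $v_j<_\sigma v_{i-1}<_\sigma v_i$ with $v_jv_i\in E(G)$ forces $v_jv_{i-1}\in E(G)$, i.e.\ $N(v_i)\subseteq N(v_{i-1})$. Hence $v_{i-1}$ alone dominates $N(v_i)$, so $N(v_i)$ is a minimal $v_{i-1},v_i$-separator and $\{v_i\}$ is a moplex (using that no two vertices of the minimal counterexample share a closed neighbourhood). Since $i\ge 2$ and $v_1,v_n$ already lie in two distinct moplexes other than $\{v_i\}$, this contradicts $G$ being a $2$-moplex graph. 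You were one comparison away --- you compared $N(v_{i+1})$ with $N(v_i)$ and $N(v_{i-1})$, when the productive comparison is $N(v_i)$ against $N(v_{i-1})$; your fallback via \cref{enu:path-cover} is likewise not developed and is not needed.
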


\begin{proof}
    By \cref{fact neighbours vi}, we have $i\ge 2$.
    Every subsequence of an umbrella-free vertex ordering remains umbrella-free.
	To prove that $\sig_i$ remains an LDFS ordering, observe that, by \cref{fact neighbours vi}, there is no neighbour $v\in N(v_{i})$ such that $v_{i}\LDFS v$. 
    Consequently, for any three vertices $a,b,c$ of $G-v_i$ such that $a<_{\sigma}b<_{\sigma}<c$, $ac \in E(G)$ and $ab \notin E(G)$, any vertex $d$ such that $a<_\sigma d <_\sigma b$, $db \in E(G)$ and $dc\notin E(G)$ is different from $v_i$.
    So all triples satisfying the condition in \Cref{def:ldfs} remain unaffected.
	As $v_n \neq v_i \neq v_1$, the vertex $v_i$ is not moplicial. 
	By \cref{lem:local structure}, $v_i$ is avoidable in $G$, and thus, by \cref{thm:removing_avoidable_vertices}, $v_1$ and $v_n$ are moplicial in $G-v_i$.

	We prove that  $v_{i-1}v_{i}\in E(G)$ by contradiction, so suppose $v_{i-1}v_{i}\notin E(G)$. 
	Now, observe that $N(v_{i})\subseteq N(v_{i-1})$.
	Indeed, for every $v_j\LDFS v_i$ with $v_{j}v_{i}\in E(G)$ and $v_{j}v_{i-1}\notin E(G)$, the vertices $v_{j},v_{i-1}$, and $v_{i}$ form an umbrella.
	But if $N(v_{i})\subseteq N(v_{i-1})$, then 
	$N(v_{i})$ is a minimal $v_{i-1}{,}v_i$-separator in $G$, and thus $\{v_{i}\}$ is a moplex.
	Recall that $i\geq 2$, and thus vertices $v_{1}$, $v_{i}$, and $v_{n}$ belong to three distinct moplexes, a contradiction.
\end{proof}

Considering the ordering obtained by performing a DFS$^+$ sweep on $\sigma$, we define $\DFS $ as the binary relation on the vertex set of $G$ such that for every $u,v\in V(G)$ we have
    \begin{align*}
        u\DFS  v\iff u\text{ is the immediate predecessor of }v\text{ in }\mathrm{DFS}^{+}(\sig).
    \end{align*}
\begin{claim}
	    \label{claim:1}
	    Let $i\in \{1,\ldots, n-1\}$ such that  $v_iv_{i+1}\notin E(G)$, and let $i',i'' $ be such that $v_{i'} \DFS  v_i \DFS  v_{i''}$. %
     Furthermore, assume that $v_n,v_{n-1},\dots,v_{i+1}$ appear before $v_i$ in $\mathrm{DFS}^{+}(\sig)$, and that either 
     \begin{enumerate}
         \item\label[case]{firstcase} $i'<i''=i-1$, or
         \item\label[case]{secondcase} $i''<i'=i-1$ and $N(v_{i'})\cap \{v_{i'-1},v_{i'-2},\dots,v_{i''+1}\}=\emptyset$.
     \end{enumerate}
    Then $v_{i'}v_{i''}\not\in E(G)$.
\label{enu:extendHamPath}
	\end{claim}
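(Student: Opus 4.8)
The plan is to argue by contradiction: assume $v_{i'}v_{i''}\in E(G)$ and derive either an umbrella in $\sig$ (impossible by \cref{thm:umbrella}, since $\sig$ is umbrella-free) or a third moplicial vertex (impossible, since $v_1$ and $v_n$ are the only moplicial vertices of $G$). Before splitting into cases I would record the local structure forced by the hypotheses. Since $v_iv_{i+1}\notin E(G)$, \cref{fact neighbours vi} gives $N(v_i)\subseteq\Set{v_1,\ldots,v_{i-1}}$ and \cref{lem:adjacent to previous} gives $v_{i-1}v_i\in E(G)$. Because $v_n,\ldots,v_{i+1}$ precede $v_i$ in $\mathrm{DFS}^{+}(\sig)$, the vertex $v_i$ is the $\sig$-largest vertex still unvisited when $\mathrm{DFS}^{+}$ reaches it; by the greedy rule of \cref{def:plus-sweep}, $v_i$ is therefore taken the instant one of its neighbours tops the search stack. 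This pins down the tree edges at $v_i$: if $v_{i'}v_i\in E(G)$ then $v_{i'}$ is the parent of $v_i$, and if $v_iv_{i''}\in E(G)$ then $v_{i''}$ is the $\sig$-largest unvisited neighbour of $v_i$, hence a child. I also use the elementary fact that $u\DFS w$ with $uw\notin E(G)$ forces $u$ to be a dead end, i.e.\ to have all its neighbours already visited, since otherwise $\mathrm{DFS}^{+}$ would descend from $u$ to a neighbour rather than to $w$.

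In \cref{firstcase} the successor of $v_i$ is $v_{i''}=v_{i-1}$, which is a child of $v_i$ because $v_{i-1}v_i\in E(G)$. Assuming $v_{i'}v_{i-1}\in E(G)$, I would first show $v_{i'}v_i\in E(G)$: otherwise $v_{i'}$ would be a dead end, yet $v_{i-1}$—adjacent to $v_{i'}$ and visited only after $v_i$—is an unvisited neighbour of $v_{i'}$, a contradiction. Hence $\Set{v_{i'},v_{i-1},v_i}$ is a triangle with $v_{i'}\LDFS v_{i-1}\LDFS v_i$, in which $v_{i'}$ is the parent of $v_i$ and $v_{i-1}$ a child. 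I would then play this rigid triangle against $\sig$: as $v_i$ is avoidable by \cref{lem:local structure} but, being distinct from $v_1$ and $v_n$, not moplicial, it has a pair of non-adjacent neighbours; feeding a suitable such pair into the three-point $C_4$ property of \cref{enu:Three-point-characterization} yields a vertex in a prescribed $\sig$-window which, together with the back edge $v_{i'}v_{i-1}$, completes an umbrella in $\sig$.

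In \cref{secondcase} the predecessor is $v_{i'}=v_{i-1}$, so $v_i$ is a child of $v_{i-1}$, and the relevant leverage is greediness at the step $v_i\DFS v_{i''}$: since $v_{i''}$ is the $\sig$-largest eligible unvisited vertex there, every still-unvisited $v_j$ with $i''<j<i-1$ is ineligible, hence non-adjacent to the current stack and in particular to $v_i$. Assuming $v_{i-1}v_{i''}\in E(G)$, I would combine this with the standing hypothesis $N(v_{i-1})\cap\Set{v_{i''+1},\ldots,v_{i-2}}=\emptyset$ to locate an intermediate index $j$ with $v_{i''}v_j\notin E(G)$, turning $v_{i''}\LDFS v_j\LDFS v_{i-1}$ into an umbrella whose long edge is the assumed $v_{i''}v_{i-1}$, again contradicting umbrella-freeness. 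The degenerate situations—every intermediate vertex adjacent to $v_{i''}$, and the boundary $i''=i-2$ where no intermediate index exists at all—would be dispatched separately, by arguing that the assumed edge forces $\Set{v_{i''}}$ to be a moplex and hence produces a third moplex besides $\Set{v_1}$ and $\Set{v_n}$.

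The main obstacle is exactly this case analysis. The routine part is reading off the $\mathrm{DFS}^{+}$ tree structure at $v_i$ from greediness and the dead-end fact; the delicate part is, in each branch, selecting the precise triple that violates umbrella-freeness and verifying that the vertex returned by \cref{enu:Three-point-characterization} lands in the $\sig$-window needed to form the umbrella. I expect the genuinely awkward points to be the degenerate sub-cases with consecutive $\sig$-indices, where no intermediate vertex is available and one must instead contradict the count of moplicial vertices directly, as well as keeping careful track of which incident $\mathrm{DFS}^{+}$ edges are tree edges and which are back edges throughout.
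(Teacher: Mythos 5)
Your proposal diverges fundamentally from the paper's proof, and the divergence is where the gap lies. In the paper, \cref{enu:extendHamPath} is not established as a local combinatorial fact about umbrella-free LDFS orderings; it is a minimal-counterexample argument. Assuming $v_{i'}v_{i''}\in E(G)$, the paper deletes $v_i$ (which is avoidable and non-moplicial by \cref{lem:local structure}, so $G-v_i$ stays connected and $2$-moplex by \cref{remove_avoidable_vertices}), invokes the \emph{minimality of $G$} to conclude that $G-v_i$ has a Hamiltonian path, applies \cref{enu:path-cover} to realise that path as $\mathrm{DFS}^{+}(\sig_i)$, checks that $v_{i'}$ and $v_{i''}$ are consecutive on it, and then re-inserts $v_i$ between them to produce a Hamiltonian path in $G$ --- contradicting the choice of $G$. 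In other words, the content of the claim is exactly ``this edge would let us extend a Hamiltonian path of $G-v_i$ to one of $G$''. Your argument never uses the minimality of $G$ or the standing assumption that $G$ is not traceable, so if it worked it would prove the claim for \emph{every} connected $2$-moplex graph with such an ordering; there is no reason to expect that stronger statement to be provable by local means, and the paper's own route strongly suggests it is tied to non-traceability.

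Concretely, the two steps you lean on are not substantiated and I do not see how to complete them. In \cref{firstcase}, after correctly deriving the triangle on $\{v_{i'},v_{i-1},v_i\}$, you assert that feeding a non-adjacent pair of neighbours of $v_i$ into \cref{enu:Three-point-characterization} yields a vertex that combines with the edge $v_{i'}v_{i-1}$ into an umbrella; but an umbrella on the long edge $v_{i'}v_{i-1}$ requires a vertex strictly between them in $\sig$ non-adjacent to both, and nothing in the $C_4$ lemma places the produced vertex in that window or controls its adjacency to $v_{i'}$ and $v_{i-1}$. In \cref{secondcase}, the non-degenerate branch does produce an umbrella if some intermediate $v_j$ is non-adjacent to $v_{i''}$, but the degenerate branches (all intermediate vertices adjacent to $v_{i''}$, or $i''=i-2$ with no intermediate vertex at all) are dispatched by the bare assertion that $\{v_{i''}\}$ would be a moplex, for which you give no argument; note that the superficially similar moplex argument in the paper's \cref{claim:3} concerns $\{v_k\}$, not $\{v_{i''}\}$, and itself \emph{relies on} \cref{enu:extendHamPath}, so you cannot borrow it without circularity. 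The correct proof is actually shorter than what you sketch: use the inductive hypothesis on $G-v_i$ together with \cref{enu:path-cover} and splice $v_i$ back in.
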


	\begin{proof}
    Note first that $\deg(v_i)\ge 2$, since otherwise $G$ would contain three distinct moplexes $\{v_1\}$, $\{v_i\}$ and $\{v_n\}$.
    This immediately implies that $v_{i''}$ is adjacent to $v_i$.

    We claim that $v_{i'}$ is the neighbour of the vertex $v_i$ that is visited first by DFS$^+(\sigma)$.
    Let $v^*$ be the neighbour of the vertex $v_i$ that is visited first by DFS$^+(\sigma)$.
    Due to \cref{fact neighbours vi} applied to $\sigma$, every neighbour $v_j$ of $v_i$ satisfies $j<i$.
    In particular, $v^*<_\sigma v_i$.
    Since DFS$^+(\sigma)$ is prioritising the choice of the maximal non-visited vertex with respect to $\sigma$ (see \cref{def:plus-sweep}), the assumption that 
     $\{v_n,v_{n-1}\dots,v_{i+1}\}$ appear before $v_i$ in $\mathrm{DFS}^{+}(\sig)$
    implies that $v^*\DFS v_i$.
    Consequently $v_{i'} = v^*$, as claimed.
    In particular, $v_{i'}$ is adjacent to $v_i$.
    Furthermore, since $v_iv_{i'},v_iv_{i''}\in E(G)$, by \cref{fact neighbours vi}, 
      the vertices $v_{i'}$ and $v_{i''}$ appear before $v_i$ in the ordering $\sigma$, that is, $i',i''< i$.
    Suppose that $v_{i'}v_{i''}\in E(G)$ (see \cref{fig:claim1} for an illustration).
		By \cref{lem:adjacent to previous}, we have that $\sig_{i}$ is an umbrella-free LDFS ordering of $G-v_{i}$.
       Furthermore, since $v_{i'}$ is the neighbour of $v_i$ that is visited first by DFS$^+(\sigma)$, the prefix of DFS$^+(\sigma_i)$ until the vertex $v_{i'}$ coincides with the prefix of DFS$^+(\sigma)$ until the same vertex.
       Next, observe that in both \cref{firstcase,secondcase} the properties of DFS$^+$ search and the assumption that $v_{i'}v_{i''}\in E(G)$ imply that the vertex $v_{i'}$ is the immediate predecessor of $v_{i''}$ in $\mathrm{DFS}^{+}(\sig_i)$.
       
        By \cref{lem:local structure}, the vertex $v_i$ is avoidable in $G$.
		Hence $v_i$ is not a cut-vertex, that is, $G-v_i$ is connected.
		Furthermore, since $v_i$ is not moplicial, \cref{remove_avoidable_vertices} implies that  $G-v_i$ is a $2$-moplex graph.  
		Now, observe that by the minimality of $G$, the graph $G-v_i$ contains a Hamiltonian path, and thus by  \cref{enu:path-cover},	
		\begin{align}
		\mathrm{DFS}^{+}(\sig_{i})=(v_{n},\dots,v_{i'},v_{i''},\dots)\label{eq:Claim2HP}
		\end{align}
		corresponds to a Hamiltonian path of $G-v_{i}$.
        To conclude the proof it is enough to extend \cref{eq:Claim2HP} to a Hamiltonian path $(v_{n},\dots,v_{i'},v_{i},v_{i''},\dots)$ in $G$, contradicting the choice of $G$.
        Therefore, $v_{i'}v_{i''}\notin E(G).$
	\end{proof}
	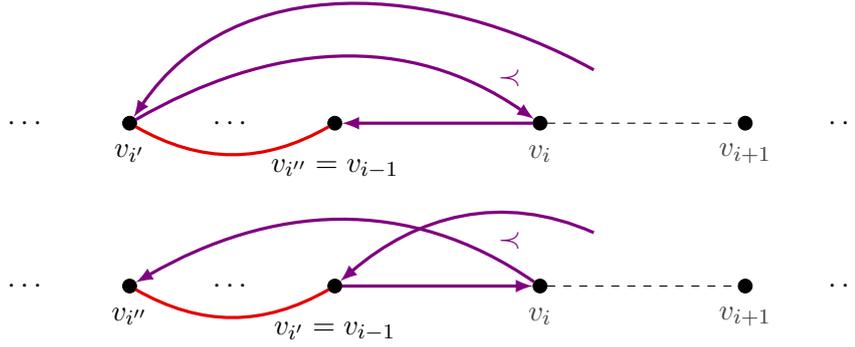
\begin{figure}[ht]
		    \centering
            \begin{tikzpicture}
				\tikzstyle{vertex}=[draw,circle,fill,scale=0.5]
				\tikzstyle{prededge}=[violet,very thick,->,-latex]
				\tikzstyle{prededgePoss}=[violet,very thick,dashed,->,-latex]
				\tikzstyle{knownedge}=[]
				\tikzstyle{knownNONedge}=[dashed]
				\def\dist{2.7cm}
				\node (center) at (0,0) {};
				\node[vertex] (vi) at (center) {};
				\node[vertex] (x) at ($(vi)-(\dist,0)$) {};
				\node (xRest) at ($(x)-(\dist/2,0)$) {$\cdots$};
				\node[vertex] (y) at ($(x)-(\dist,0)$) {};
				\node[vertex] (vii) at ($(vi)+(\dist,0)$) {};
				\node (viiRest) at ($(vii)+(\dist/2,0)$) {$\cdots$};
				\node (final) at ($(vii)+(\dist,0)$) {};
				\node (yRest) at ($(y)-(\dist/2,0)$) {$\cdots$};

				\draw[knownNONedge] (vi) edge (vii);

				\draw[prededge] (vi) to (x);
				\draw[prededge,bend left,in=145,out=30] (y) to (vi);
                \draw[prededge,bend right,out=325,in=230] ($(vi)+(45:1)$) to (y);
				\draw[very thick,myRed,bend right,out=330,in=210] (y) edge (x);

				\node (predLabel) at ($(center)+(-0.4,0.6)$) {\textcolor{violet}{$\mathbf{\DFS}$}};
				\node (yLabel) at ($(y)-(0,0.4)$) {$v_{i'}$}; %
				\node (xLabel) at ($(x)-(0,0.6)$) {$v_{i''} = v_{i-1}$}; %
				\node (viLabel) at ($(vi)-(0,0.4)$) {\textcolor{darkgray}{$v_i$}};
				\node (viiLabel) at ($(vii)-(0,0.4)$) {\textcolor{darkgray}{$v_{i+1}$}};
			\end{tikzpicture}
            \centering
			\begin{tikzpicture}
				\tikzstyle{vertex}=[draw,circle,fill,scale=0.5]
				\tikzstyle{prededge}=[violet,very thick,->,-latex]
				\tikzstyle{prededgePoss}=[violet,very thick,dashed,->,-latex]
				\tikzstyle{knownedge}=[]
				\tikzstyle{knownNONedge}=[dashed]
				\def\dist{2.7cm}
				\node (center) at (0,0) {};
				\node[vertex] (vi) at (center) {};
				\node[vertex] (x) at ($(vi)-(\dist,0)$) {};
				\node (xRest) at ($(x)-(\dist/2,0)$) {$\cdots$};
				\node[vertex] (y) at ($(x)-(\dist,0)$) {};
				\node[vertex] (vii) at ($(vi)+(\dist,0)$) {};
				\node (viiRest) at ($(vii)+(\dist/2,0)$) {$\cdots$};
				\node (final) at ($(vii)+(\dist,0)$) {};
				\node (yRest) at ($(y)-(\dist/2,0)$) {$\cdots$};

				\draw[knownNONedge] (vi) edge (vii);

				\draw[prededge] (x) to (vi);
				\draw[prededge,bend right,out=325,in=210] (vi) to (y);
                \draw[prededge,bend right,out=325,in=210] ($(vi)+(45:1)$) to (x);
				\draw[very thick,myRed,bend right,out=330,in=210] (y) edge (x);

				\node (predLabel) at ($(center)+(-0.4,0.6)$) {\textcolor{violet}{$\mathbf{\DFS}$}};
				\node (yLabel) at ($(y)-(0,0.4)$) {$v_{i''}$}; %
				\node (xLabel) at ($(x)-(0,0.6)$) {$v_{i'} = v_{i-1}$}; %
				\node (viLabel) at ($(vi)-(0,0.4)$) {\textcolor{darkgray}{$v_i$}};
				\node (viiLabel) at ($(vii)-(0,0.4)$) {\textcolor{darkgray}{$v_{i+1}$}};
			\end{tikzpicture}
			\caption{The vertices are drawn in order of $\sig$.
			Edges are solid while non-edges are dashed.
			The purple, directed edges stand for the $\DFS$-relation; they are solid as we know them to be edges.
			\Cref{claim:1} proves that $v_{i'}v_{i''}$ is not an edge: the upper part shows \cref{firstcase} and the lower part \cref{secondcase}.}
		    \label{fig:claim1}
		\end{figure}
		
    Now fix $k$ to be the maximal integer in $\{2,\ldots, n-1\}$ such that $v_{k}v_{k+1}\notin E(G)$, and let $G'=G-v_k$.
    Applying \cref{lem:adjacent to previous} yields that $\sig_k =(v_{1},v_{2},\dots,v_{k-1},v_{k+1},\dots,v_{n})$ is an umbrella-free LDFS ordering of $G'$ starting and ending on moplicial vertices.
    
    Since $v_k$ is an avoidable vertex in $G$ it is not a cut-vertex, and hence $G'$ is connected. Furthermore, since $v_k$ is not moplicial, \cref{remove_avoidable_vertices} implies that $G'$ is a $2$-moplex graph.
    Thus, by the minimality of $G$ and by \cref{enu:path-cover}, it follows that $\mathrm{DFS}^{+}(\sig_k)$ corresponds to a Hamiltonian path of $G-v_{k}$.
    At this point, we know that $\mathrm{DFS}^{+}(\sig)$ and $\mathrm{DFS}^{+}(\sig_k)$ share the prefix 
    $(v_{n}, v_{n-1},\dots, v_{k+1})$, and thus $v_{n}\DFS v_{n-1}\DFS \dots\DFS v_{k+1}$.
    Since $v_{k}v_{k+1}\notin E(G)$, \cref{fact neighbours vi} implies that $v_{k+1}$ has at least one neighbour in $\Set{v_1,\dots,v_{k-1}}$, and hence $v_{k+1}$ must be adjacent in $G$ to its successor in $\mathrm{DFS}^{+}(\sig)$.
    
    \begin{claim}
        \label{claim:2}
	    We have $v_{k+1}\DFS  v_{k-1}$.
    \end{claim}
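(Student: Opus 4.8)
The plan is to reduce the claim to a single adjacency, namely $v_{k-1}v_{k+1}\in E(G)$, and then to contradict \cref{claim:1}. Since the prefix of $\mathrm{DFS}^{+}(\sig)$ is $(v_n,\dots,v_{k+1})$, the set of vertices still unvisited when the search reaches $v_{k+1}$ is exactly $\{v_1,\dots,v_k\}$, and by the $+$-rule (\cref{def:plus-sweep}) the successor of $v_{k+1}$ is its $\sigma$-largest unvisited neighbour. As $v_kv_{k+1}\notin E(G)$, this neighbour has index at most $k-1$, so it suffices to show that $v_{k-1}v_{k+1}\in E(G)$: then $v_{k-1}$, being the unvisited neighbour of largest index, is the one selected, giving $v_{k+1}\DFS v_{k-1}$. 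I would prove this adjacency by contradiction, assuming $v_{k-1}v_{k+1}\notin E(G)$.

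First I would record two consequences of umbrella-freeness. Let $v_q$ denote the $\sigma$-largest neighbour of $v_{k+1}$ of index below $k$, which exists by \cref{fact neighbours vi}. Inspecting the triple $v_q<_\sigma v_k<_\sigma v_{k+1}$, where $v_qv_{k+1}\in E(G)$ and $v_kv_{k+1}\notin E(G)$, avoiding an umbrella forces $v_qv_k\in E(G)$. Under the contradiction hypothesis $v_{k-1}v_{k+1}\notin E(G)$ we have $q\le k-2$, and the triple $v_q<_\sigma v_{k-1}<_\sigma v_{k+1}$, with $v_qv_{k+1}\in E(G)$ and $v_{k-1}v_{k+1}\notin E(G)$, likewise forces $v_qv_{k-1}\in E(G)$.

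Next I would trace the search from $v_{k+1}$. Its successor is $v_q$. When the search is expanded at $v_q$, the vertex $v_k$ is still unvisited, is adjacent to $v_q$, and carries the largest index among all unvisited vertices (everything of index exceeding $k$ lies in the already-consumed prefix); hence $v_q\DFS v_k$. When the search is then expanded at $v_k$, whose neighbours all precede it by \cref{fact neighbours vi}, the still-unvisited neighbour of largest index is $v_{k-1}$: it is a neighbour of $v_k$ by \cref{lem:adjacent to previous}, and it is unvisited since $q\le k-2$. Hence $v_k\DFS v_{k-1}$.

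This places us exactly in \cref{firstcase} of \cref{claim:1} applied with $i=k$: we have $v_q\DFS v_k\DFS v_{k-1}$ with $q<k-1=i-1$, and $v_n,\dots,v_{k+1}$ appear before $v_k$ in $\mathrm{DFS}^{+}(\sig)$, so the claim yields $v_qv_{k-1}\notin E(G)$, contradicting the $v_qv_{k-1}\in E(G)$ obtained above. Therefore $v_{k-1}v_{k+1}\in E(G)$ and $v_{k+1}\DFS v_{k-1}$. I expect the main obstacle to be the bookkeeping of the third paragraph: one must verify that the $+$-rule genuinely drives the search along $v_{k+1}\to v_q\to v_k\to v_{k-1}$, which rests on $v_k$ being the globally largest unvisited index at the moment $v_q$ is expanded and on $v_{k-1}\in N(v_k)$ still being unvisited. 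Once this path is pinned down, the tension between \cref{claim:1} (which makes $v_q$ and $v_{k-1}$ non-adjacent) and umbrella-freeness (which makes them adjacent) does the rest.
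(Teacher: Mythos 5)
Your proposal is correct and follows essentially the same route as the paper: your $v_q$ is the paper's $v_j$ (the $\sigma$-largest neighbour of $v_{k+1}$ below index $k$), both arguments use umbrella-freeness to force $v_qv_k,v_qv_{k-1}\in E(G)$ under the assumption $v_{k-1}v_{k+1}\notin E(G)$, and both derive the chain $v_{k+1}\DFS v_q\DFS v_k\DFS v_{k-1}$ to contradict \cref{claim:1} in \cref{firstcase} with $i=k$. The only difference is that you spell out the DFS$^{+}$ bookkeeping for the steps $v_q\DFS v_k\DFS v_{k-1}$ slightly more explicitly than the paper does.
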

    \begin{proof}
        Let $j$ be the largest integer such that $j\le k$ and $v_{j}v_{k+1}\in E(G)$.
	    The choice of $j$ implies that $v_{k+1}\DFS v_{j}$ and $j\neq k$.
    	
    	Suppose towards a contradiction that $j \neq k-1$, that is, $v_{k+1}v_{k-1} \notin \E{G}$ (see \cref{fig:claim2} for an illustration).
    	This implies that $v_{j}$ is adjacent to both $v_{k-1}$ and $v_{k}$, as otherwise 
     either $\{v_j,v_{k-1},v_{k+1}\}$ or $\{v_j,v_k,v_{k+1}\}$ form an umbrella in $\sigma$.
    	By \cref{lem:adjacent to previous}, we also
	    have $v_{k}v_{k-1}\in E(G)$, which 
      implies $v_{k+1}\DFS  v_{j}\DFS  v_{k}\DFS  v_{k-1}$.
 As $v_jv_{k-1}\in \E{G}$ this 
     contradicts \cref{enu:extendHamPath} (applied to $i=k$, with assumption \ref{firstcase}).
	    \begin{figure}[ht]
	        \centering
			\begin{tikzpicture}
				\tikzstyle{vertex}=[draw,circle,fill,scale=0.5]
				\tikzstyle{prededge}=[violet,very thick,->,-latex]
				\tikzstyle{prededgePoss}=[violet,very thick,dashed,->,-latex]
				\tikzstyle{knownedge}=[]
				\tikzstyle{knownNONedge}=[dashed]
				\def\dist{2.7cm}
				\node (center) at (0,0) {};
				\node[vertex] (vk) at (center) {};
				\node[vertex] (kv) at ($(vk)-(\dist,0)$) {};
				\node[vertex,myRed] (vj) at ($(kv)-(\dist,0)$) {};
				\node[vertex] (vkk) at ($(vk)+(\dist,0)$) {};
				\node (final) at ($(vkk)+(\dist,0)$) {};
				\node (dvj) at ($(vj)-(\dist/2,0)$) {$\cdots$};
				\node (dkv) at ($(kv)-(\dist/2,0)$) {$\cdots$};
				\node (dfinal) at ($(final)-(\dist/2,0)$) {$\cdots$};

				\draw[knownNONedge] (vk) edge (vkk);
				\draw[knownNONedge,myRed,very thick,bend left] (kv) edge (vkk);

				\draw[prededge,bend right,out=320,in=220] (vkk) to (vj);
				\draw[prededge,bend left,out=330,in=210] (vj) to (vk);
				\draw[prededge] (vk) to (kv);
				\draw[draw,very thick,bend left,out=20,in=160] (vj) edge (kv);
				\node (predLabel) at ($(vkk)+(-0.4,0.6)$) {\textcolor{violet}{$\mathbf{\DFS}$}};
				\node (vjLabel) at ($(vj)-(0,0.4)$) {\textcolor{myRed}{$v_j$}};
				\node (vkLabel) at ($(vk)+(0,0.4)$) {\textcolor{darkgray}{$v_k$}};
				\node (kvLabel) at ($(kv)-(0,0.4)$) {\textcolor{darkgray}{$v_{k-1}$}};
				\node (vkkLabel) at ($(vkk)-(0,0.4)$) {\textcolor{darkgray}{$v_{k+1}$}};
			\end{tikzpicture}
		\caption{The vertices are drawn in order of $\sig$.
			Edges are solid while non-edges are dashed.
			The purple directed edges stand for the $\DFS$-relation; they are solid as we know them to be edges.
			\Cref{claim:2} proves that $\mathrm{DFS}^{+}(\sig)$ visits $v_{k-1}$ after $v_{k+1}$, as a different vertex $v_j$ would yield a contradiction to \cref{claim:1}.}
		    \label{fig:claim2}
		\end{figure}
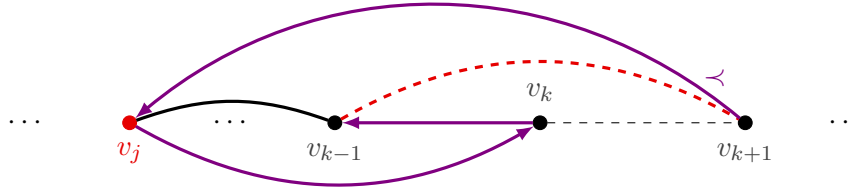
    \end{proof}

    From $v_kv_{k-1} \in \E{G}$ and the definition of $\mathrm{DFS}^{+}$ it follows that $v_k$ is visited immediately after $v_{k-1}$, i.e.\ $v_{k-1}\DFS v_k$.
    So by now we have established $v_{n}\DFS \dots\DFS  v_{k+1}\DFS  v_{k-1}\DFS  v_{k}$.
    This corresponds to a path of length $n-k+1$ in $G$, so in the following we assume $k>2$.

    \begin{claim}
        \label{claim:3}
	    We have that $v_{k-2}v_{k-1}\notin E(G)$.
    \end{claim}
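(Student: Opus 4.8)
The plan is to argue by contradiction, so suppose $v_{k-2}v_{k-1}\in E(G)$. As in the proofs of \cref{claim:2,lem:adjacent to previous}, I would first record the relevant structure of the two depth-first searches. Since $v_k$ is avoidable and not moplicial, \cref{remove_avoidable_vertices} gives that $G-v_k$ is a connected $2$-moplex graph, so by the minimality of $G$ together with \cref{enu:path-cover} the ordering $\mathrm{DFS}^{+}(\sig_k)$ is a Hamiltonian path of $G-v_k$. Moreover, by \cref{fact neighbours vi} every neighbour of $v_{k-1}$ of index larger than $k-1$ is already among the visited vertices $v_n,\ldots,v_{k+1}$, so the hypothesis $v_{k-2}v_{k-1}\in E(G)$ makes $v_{k-2}$ the largest unvisited neighbour of $v_{k-1}$; hence $\mathrm{DFS}^{+}(\sig_k)$ begins $(v_n,\ldots,v_{k+1},v_{k-1},v_{k-2},\ldots)$.

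Next I would split according to whether $v_{k-2}v_k\in E(G)$. If $v_{k-2}v_k\in E(G)$, then $v_{k-2}$ is also the largest unvisited neighbour of $v_k$ in $\mathrm{DFS}^{+}(\sig)$, so $v_{k-1}\DFS v_k\DFS v_{k-2}$. Applying \cref{enu:extendHamPath} at $i=k$ in the regime of \cref{secondcase}, with $i'=k-1$ and $i''=k-2$, the index set $\{v_{i'-1},\ldots,v_{i''+1}\}$ would run from $v_{k-2}$ down to $v_{k-1}$ and is therefore empty, so the side condition $N(v_{i'})\cap\{v_{i'-1},\ldots,v_{i''+1}\}=\emptyset$ holds vacuously; the lemma then yields $v_{k-1}v_{k-2}\notin E(G)$, contradicting the hypothesis. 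Equivalently, one may simply insert $v_k$ between the consecutive vertices $v_{k-1}$ and $v_{k-2}$ of the Hamiltonian path of $G-v_k$, using $v_{k-1}v_k,v_kv_{k-2}\in E(G)$, to obtain a Hamiltonian path of $G$.

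It remains to rule out the case $v_{k-2}v_k\notin E(G)$, which I expect to be the main obstacle. Here \cref{enu:extendHamPath} cannot be invoked directly: the $\mathrm{DFS}^{+}(\sig)$-successor of $v_k$ is now a vertex $v_q$ with $q\le k-3$, and the edge $v_{k-2}v_{k-1}$ itself violates the side condition of \cref{secondcase}. As preparatory observations I would use umbrella-freeness on $v_{k-2}<_{\sigma}v_k<_{\sigma}v_{k+1}$ to get $v_{k-2}v_{k+1}\notin E(G)$, and the $C_4$-characterization \cref{enu:Three-point-characterization} to show that $v_{k-2}$ has no neighbour of index exceeding $k-1$: such a neighbour $v_m$ (necessarily $m\ge k+2$) together with $a=v_{k-2}$, $b=v_k$, $c=v_m$ would force an induced $C_4$ on $\{v_{k-2},v_{k-1},v_k,v_m\}$ whose edge set includes $v_kv_m$, impossible since $v_k$ has no neighbour of index greater than $k$ by \cref{fact neighbours vi}. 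Thus $N(v_{k-2})\subseteq\{v_1,\ldots,v_{k-3},v_{k-1}\}$. The crux is then to convert the Hamiltonian path $(v_n,\ldots,v_{k+1},v_{k-1},v_{k-2},\ldots)$ of $G-v_k$ into a Hamiltonian path of $G$ by re-routing $v_k$ through its neighbour $v_q$, contradicting minimality; since $v_k$ is not adjacent to $v_{k-2}$, the naive insertion fails, and this step will require combining the localisation of $N(v_{k-2})$ with a further appeal to \cref{enu:extendHamPath} or the rotation properties of $\mathrm{DFS}^{+}$ orderings to produce the required path.
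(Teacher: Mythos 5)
Your first case ($v_{k-2}v_k\in E(G)$) matches the paper's argument exactly: both apply \cref{enu:extendHamPath} at $i=k$ under assumption \ref{secondcase} with $i'=k-1$ and $i''=k-2$, where the side condition is vacuous, to conclude $v_{k-1}v_{k-2}\notin E(G)$ and reach a contradiction. The problem is the second case, $v_{k-2}v_k\notin E(G)$, which you yourself flag as unresolved: your preparatory observations (that $v_{k-2}v_{k+1}\notin E(G)$ and that $v_{k-2}$ has no neighbour of index above $k-1$) are correct but point in an unproductive direction, and the proposal ends with a plan for future work (``re-route $v_k$ through $v_q$ \dots\ will require a further appeal to \dots'') rather than an argument. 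This is a genuine gap, and it is precisely where the content of the claim lies.

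The missing idea is that in this case one should not try to repair the Hamiltonian path at all, but should instead contradict the $2$-moplex hypothesis directly by localising $N(v_k)$ rather than $N(v_{k-2})$. By \cref{fact neighbours vi}, every neighbour of $v_k$ precedes $v_k$ in $\sigma$, so any $x\in N(v_k)\setminus\{v_{k-1}\}$ satisfies $x<_{\sigma}v_{k-2}$; if such an $x$ were not adjacent to $v_{k-2}$, then $x<_{\sigma}v_{k-2}<_{\sigma}v_k$ together with $xv_k\in E(G)$, $xv_{k-2}\notin E(G)$, and $v_{k-2}v_k\notin E(G)$ would form an umbrella in $\sigma$. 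Since $v_{k-1}\in N(v_{k-2})$ by your standing supposition, this yields $N(v_k)\subseteq N(v_{k-2})$, which makes $N(v_k)$ a minimal $v_k{,}v_{k-2}$-separator and hence $\{v_k\}$ a moplex, exactly as in the proof of \cref{lem:adjacent to previous}. Together with the two distinct moplexes containing $v_1$ and $v_n$, this gives three moplexes in $G$, contradicting the $2$-moplex assumption. Without this step (or a completed path-surgery argument, which you do not supply), the claim is not proved.
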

    \begin{proof}
	    Suppose $v_{k-2}v_{k-1}$ is an edge (see \cref{fig:claim3} for an illustration).
	    Then $v_{k-2}v_{k}\notin E(G)$,
	    as otherwise $v_{k+1}\DFS  v_{k-1}\DFS  v_{k}\DFS  v_{k-2}$, which contradicts \cref{enu:extendHamPath} (with $i=k$ under assumption \ref{secondcase}).
	    However, since $v_{k-2}v_{k}\notin E(G)$, observe that $N(v_{k})\subseteq N(v_{k-2})$, as otherwise $v_{k-2}$ and $v_{k}$ would form an umbrella in $\sig$ with any neighbour $x$ of $v_k$ that is not a neighbour of $v_{k-2}$, using the fact that $x \LDFS v_{k-2}$, by \cref{fact neighbours vi} (with $i=k$), and $x\neq v_{k-1}$.
	    The inclusion $N(v_{k})\subseteq N(v_{k-2})$ implies that $\{v_{k}\}$ is a moplex.
	    Since $v_1$ and $v_n$ belong to distinct moplexes in $G$ other than $\{v_k\}$, this contradicts that $G$ is a $2$-moplex graph.
    \end{proof}
    \begin{figure}[ht]
	        \centering
			\begin{tikzpicture}
				\tikzstyle{vertex}=[draw,circle,fill,scale=0.5]
				\tikzstyle{prededge}=[violet,very thick,->,-latex]
				\tikzstyle{prededgePoss}=[violet,very thick,dashed,->,-latex]
				\tikzstyle{knownedge}=[]
				\tikzstyle{knownNONedge}=[dashed]
				\def\dist{2.4cm}
				
				\pgfdeclarelayer{background}
			
			    \pgfsetlayers{background,main}
			    \begin{pgfonlayer}{main}
				\node (center) at (0,0) {};
				\node[vertex] (vk) at (center) {};
				\node[vertex] (kv) at ($(vk)-(\dist,0)$) {};
				\node[vertex] (kkv) at ($(kv)-(\dist,0)$) {};
				\node (dkkv) at ($(kkv)-(\dist/2,0)$) {$\cdots$};
				\node[vertex] (vumbrella) at ($(kkv)-(\dist,0)$) {};
				\node (du) at ($(vumbrella)-(\dist/2,0)$) {$\cdots$};
				\node[vertex] (vkk) at ($(vk)+(\dist,0)$) {};
				\node (final) at ($(vkk)+(\dist,0)$) {};
				\node (dfinal) at ($(final)-(\dist/2,0)$) {$\cdots$};

				\draw[knownNONedge] (vk) edge (vkk);

				\draw[prededge,bend right,out=-60,in=-120] (vkk) to (kv);
				\draw[prededge] (kv) edge (vk);
				\draw[draw,very thick,myRed] (kkv) edge (kv);
				\draw[very thick,bend right,out=-50,in=-125,dash pattern=on 4pt off 3pt] (vk) to (kkv);
				\draw[very thick,bend right,out=-40,in=-140,dash pattern=on 4pt off 3pt] (kkv) edge (vumbrella);
				\draw[very thick,bend left,out=70,in=110] (vumbrella) edge (vk);

				\node (predLabel) at ($(vkk)+(-0.4,1.1)$) {\textcolor{violet}{$\mathbf{\DFS}$}};
				\node (vjLabel) at ($(kkv)-(0,0.5)$) {\textcolor{darkgray}{$v_{k-2}$}};
				\node (vkLabel) at ($(vk)-(0,0.5)$) {\textcolor{darkgray}{$v_k$}};
				\node (kvLabel) at ($(kv)-(0,0.5)$) {\textcolor{darkgray}{$v_{k-1}$}};
				\node (vkkLabel) at ($(vkk)-(0,0.5)$) {\textcolor{darkgray}{$v_{k+1}$}};
				\node (xLabel) at ($(vumbrella)-(0,0.5)$) {\textcolor{darkgray}{$x$}};
				\node (umbrellaLabel) at ($(kkv)+(0.2,1.4)$) {umbrella};%
				\end{pgfonlayer}
				\begin{pgfonlayer}{background}
				    \begin{scope}
				        \path[clip]
                            (vumbrella) to [bend left,out=70,in=110] (vk) to [bend right,out=-50,in=-125]  (kkv.center) to [bend right,out=-40,in=-140] (vumbrella) to cycle;
                        \fill[opacity=0.1] (vumbrella) rectangle ($(vk)+(0,2.2)$);
				    \end{scope}
				\end{pgfonlayer}
			\end{tikzpicture}
			\caption{The vertices are drawn in order of $\sig$.
			Edges are solid while non-edges are dashed.
			The purple, directed edges stand for the $\DFS$-relation; they are solid as we know them to be edges.
			\Cref{claim:3} shows that $v_{k-2}v_{k-1}$ is a non-edge as otherwise $G$ contains another moplex.}
		    \label{fig:claim3}
		\end{figure}
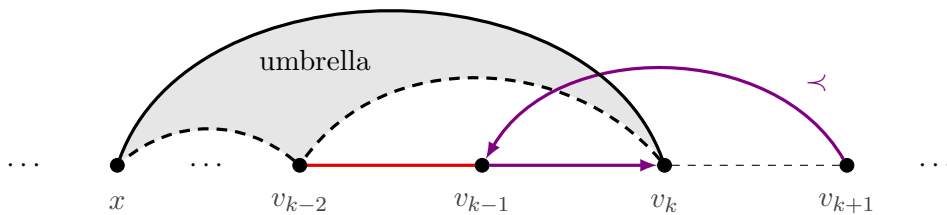
		
    Since $v_{k-2}v_{k-1}\notin E(G)$, we can use \cref{fact neighbours vi,lem:adjacent to previous} to obtain $k\geq4$ and $v_{k-2}v_{k-3}\in E(G)$, respectively.
    As $v_{k}$ is not moplicial, it has degree at least two.
    Fix $v_{j}$ to be its neighbour such that the value of $j<k-1$ is maximised, i.e.\ $v_{k}\DFS  v_{j}$.
    We now show that no value of $j$ is realisable.
    
    As we know that $v_{k-2}v_{k-1}\notin E(G)$, and since $v_{k-2}v_{k}\in E(G)$ would violate $\sigma $ being a DFS ordering (see \cref{fact neighbours vi}), we deduce that $v_{k-2}v_{k}\notin E(G)$, and thus we have $j\neq k-2$.
    
    Suppose next that $j=k-3$, or equivalently $v_{k-3}v_{k}\in E(G)$.
    Then $v_{k-1}v_{k-3}\in E(G)$, otherwise the vertices $v_{k-3}\LDFS v_{k-1} \LDFS v_{k}$ violate \cref{def:ldfs}.
    Since $v_{k-1}\DFS  v_{k}\DFS  v_{k-3}$, \cref{enu:extendHamPath} (with $i=k$ under assumption \ref{secondcase})
    implies $v_{k-1}v_{k-3}\notin E(G)$, a contradiction.

    Finally, suppose that $j< k-3$ and observe that $v_{k-2}v_{j}$ and $v_{k-3}v_{j}$ are edges, as otherwise the corresponding two vertices would form an umbrella on $\sig$ with $v_{k}$.
    In particular, we have
    \begin{align*}
        v_{k+1}\DFS  v_{k-1}\DFS  v_{k}\DFS  v_{j}\DFS  v_{k-2}\DFS  v_{k-3}\,.
    \end{align*}
    By \cref{enu:extendHamPath} (with $i=k-2$ under \ref{firstcase}), we get $v_jv_{k-3}\notin\E{G}$, a contradiction. 
    This concludes the proof of \cref{thmMtwoHamiltonicity}.
\end{proof}

We conclude the section by mentioning that, due to \cite{MR3238908}, computing a minimum path cover can be done in linear time for cocomparability graphs.
This result, along with \cref{thmMtwoHamiltonicity,M2 is cocomp}, implies the following.

\begin{corollary}
Given a connected $2$-moplex graph $G$, a Hamiltonian path in $G$ can be computed in linear time.
\end{corollary}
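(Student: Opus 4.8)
The plan is to combine the three results gathered immediately before the statement: \cref{M2 is cocomp}, which places every $2$-moplex graph inside the class of cocomparability graphs; \cref{thmMtwoHamiltonicity}, which guarantees that a connected $2$-moplex graph is traceable; and the linear-time minimum-path-cover algorithm of K\"ohler and Mouatadid~\cite{MR3238908} for cocomparability graphs. The key observation tying these together is elementary: a Hamiltonian path is precisely a path cover consisting of a single path, and no path cover of a nonempty graph can use fewer than one path. Hence, whenever a graph is traceable, \emph{any} minimum path cover necessarily consists of a single path, and that path is a Hamiltonian path.

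Concretely, given a connected $2$-moplex graph $G$, I would first invoke \cref{M2 is cocomp} to conclude that $G$ is a cocomparability graph, so that the algorithm of~\cite{MR3238908} applies. Running that algorithm produces, in time $O(|V(G)|+|E(G)|)$, a minimum path cover $\mathcal{P}$ of $G$. By \cref{thmMtwoHamiltonicity}, $G$ admits a Hamiltonian path, so the minimum number of vertex-disjoint paths needed to cover $V(G)$ equals one. Consequently $\mathcal{P}$ consists of a single path $P$ spanning every vertex of $G$, and $P$ is the desired Hamiltonian path, obtained within the claimed linear time bound.

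The only point requiring care is the input convention of the minimum-path-cover routine: the algorithm of~\cite{MR3238908} operates on a cocomparability graph equipped with a suitable umbrella-free (LDFS) vertex ordering, so to keep the overall running time linear one must either assume such an ordering is supplied together with $G$, or absorb into the bound the linear-time computation of one (the very ordering already exploited in \cref{cor:umbrella-free-LDFS} and in the proof of \cref{thmMtwoHamiltonicity}). I would therefore state explicitly that the linear-time guarantee encompasses both the construction of an umbrella-free LDFS ordering and the subsequent path-cover computation; beyond this bookkeeping, no structural work other than \cref{M2 is cocomp,thmMtwoHamiltonicity} is needed.
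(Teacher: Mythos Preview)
Your proposal is correct and follows exactly the same approach as the paper: invoke \cref{M2 is cocomp} to place $G$ among cocomparability graphs, invoke \cref{thmMtwoHamiltonicity} to guarantee traceability, and then apply the linear-time minimum-path-cover algorithm of~\cite{MR3238908}, which must output a single path. Your additional remark about the input convention for the LDFS ordering is a reasonable clarification that the paper leaves implicit.
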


\section{Concluding remarks} 

Moplexes provide a tool that has the potential to lift the beneficial structural properties of simplicial modules in chordal graphs to the setting of all graphs, see e.g.~\cite{BerryBBS10,berry2001asteroidal,BerryB01}.
We thus believe that graphs with a bounded number of moplexes form interesting graph classes which were well overdue for further study.
We introduce the moplex number of a graph, focusing our study on properties of graphs with moplex number $2$, the smallest nontrivial class in the moplex-number hierarchy.

In \cref{thm:removing_avoidable_vertices}, we identify a graph operation which preserves the moplexes, namely, the removal of an avoidable non-moplicial vertex.
One can easily determine other such operations (e.g.\ removing a universal vertex or a true twin).
It would be interesting to characterise the class of $k$-moplex graphs by identifying a set of operations and a base class that can be used to generate every member of the class.
The existence of such a characterisation is still open, even when $k=2$.

We show in \cref{thmMtwoHamiltonicity} that every connected $2$-moplex graph is traceable.
In this respect, we conjecture the following strengthening of \cref{thmMtwoHamiltonicity}.
(In fact, no counterexample is known even for the stronger property of pancyclicity.)
\begin{conjecture}
Every $2$-connected $2$-moplex graph has a Hamiltonian cycle.
\end{conjecture}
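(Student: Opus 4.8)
The plan is to leverage the paper's structural results together with a classical Hamiltonicity criterion for cocomparability graphs. Since a $2$-connected graph is in particular connected, \cref{M2 is cocomp} tells us that $G$ is a cocomparability graph, and complete graphs on at least three vertices are trivially Hamiltonian, so we may assume that $G$ is non-complete with two moplexes $U$ and $W$. A tempting first attempt is to take the Hamiltonian path guaranteed by \cref{thmMtwoHamiltonicity} and close it into a cycle; however, its endpoints lie in $U$ and $W$, and by \cref{eq:isolatedMoplex} there are no edges between $U$ and $W$, so the path cannot be closed directly. I would therefore route through toughness: I would prove that every $2$-connected $2$-moplex graph is $1$-tough, and then invoke the theorem of Deogun, Kratsch and Steiner that every $1$-tough cocomparability graph has a Hamiltonian cycle.

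First I would establish the toughness statement, that is, $c(G-S)\le |S|$ for every cutset $S$, where $c(\cdot)$ counts connected components. The crucial structural input is \cref{G-S two cc}: every \emph{minimal} separator of a $2$-moplex graph leaves exactly two components. Note that any minimum vertex cut is a minimal separator, so $2$-connectivity forces $\kappa(G)\ge 2$ and the smallest cuts already leave exactly two components. Suppose, for contradiction, that toughness fails, and pick a counterexample cutset $S$ with $c(G-S)=|S|+1$ and $|S|$ minimum. Then $c(G-S)\ge 3$, so $S$ is not a minimal separator. Choosing two components and a minimal separator $S'\subsetneq S$ between them, \cref{G-S two cc} gives $G-S'=D_1\sqcup D_2$ with $|S'|\ge 2$; writing $S=S'\sqcup S_1\sqcup S_2$ with $S_i\subseteq D_i$, and using that $D_1$ and $D_2$ have no edges between them, we obtain $c(G-S)=c(G[D_1]-S_1)+c(G[D_2]-S_2)$.

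The reduction then succeeds provided each component inherits a weak toughness bound: if $c(G[D_i]-X)\le |X|+1$ for all $X$ (scattering number at most one), then $c(G-S)\le (|S_1|+1)+(|S_2|+1)\le |S_1|+|S_2|+|S'|=|S|$, contradicting $c(G-S)=|S|+1$. This scattering bound holds for any connected $2$-moplex graph, since by \cref{thmMtwoHamiltonicity} such a graph is traceable, and deleting $X$ splits a Hamiltonian path into at most $|X|+1$ subpaths. As $D_i$ is connected, everything reduces to the following \emph{crux claim}: a component of $G$ minus a minimal separator induces a $2$-moplex graph.

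The hard part will be the crux claim, precisely because the class of $2$-moplex graphs is not hereditary, so $G[D_i]$ is not automatically a $2$-moplex graph. I would attack it by analysing how the moplexes of $G[D]$ relate to the minimal separators of $G$: each moplex of $G[D]$, together with a full component on the opposite side, should be traceable back via \cref{moplex in each CC} and the exactly-two-components property to one of $U,W$, bounding the moplex number of $G[D]$ by two. Should this resist, an alternative is the poset viewpoint: $G$ is the incomparability graph of the strict partial order $R_U$, the components of $G-S$ are the blocks of the finest ordinal-sum decomposition of the induced subposet, and $1$-toughness becomes the statement that deleting $S$ creates at most $|S|$ ordinal gaps; here \cref{graphs in M2 are AT-free} should be used to show that no single deleted vertex can be responsible for two gaps, which is exactly the phenomenon that fails for $K_{2,3}$ (a $2$-connected cocomparability graph that is not $1$-tough and not $2$-moplex). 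Gaining this control on blocking multiplicity is the main obstacle, and the conjectured pancyclicity would require strictly more.
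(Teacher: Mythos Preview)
The statement you are trying to prove is posed in the paper as an \emph{open conjecture}; the paper contains no proof, so there is nothing to compare your approach against. Your route via $1$-toughness and the Deogun--Kratsch--Steiner theorem for cocomparability graphs is a very natural line of attack, and your reduction to the ``crux claim'' is clean. Unfortunately, the crux claim is false as stated.

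Take $G=3K_1$ in the construction from the proof of \cref{thm:cocomp-are-induced-in-M2}, yielding the $2$-moplex graph $G'$ on $\{u\}\cup A\cup\{v_1,v_2,v_3\}\cup B\cup\{w\}$ with $|A|=|B|=3$; one checks directly that $G'$ is $2$-connected. The set $A=N(u)$ is a minimal separator, and $G'-A$ has two components: $\{u\}$ and $D=\{v_1,v_2,v_3\}\cup B\cup\{w\}$. In $G'[D]$ the sets $\{v_1\},\{v_2\},\{v_3\},\{w\}$ are four pairwise distinct moplexes (each $N_{G'[D]}(v_i)=\{b_i,\ldots,b_3\}$ is a clique and a minimal separator, and likewise for $w$), so $G'[D]$ is not a $2$-moplex graph. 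Thus your induction via ``each side of a minimal separator is again $2$-moplex'' breaks, and with it the appeal to \cref{thmMtwoHamiltonicity} for the scattering bound on $D_i$. In this particular example $G'[D]$ happens to be traceable anyway ($w\,b_1\,v_1\,b_2\,v_2\,b_3\,v_3$), so the conclusion you need may still hold, but your argument does not establish it. The alternative poset approach you sketch is too vague to assess; controlling how many ``gaps'' a single deleted vertex can create is exactly the heart of the difficulty, and you have not given a mechanism for it. As the paper notes, the conjecture (and even pancyclicity) remains open.
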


Some of the questions answered for the case $k=2$ can also be asked for $k>2$.
For instance, how do the classes of $k$-moplex graphs relate with the hierarchy of hereditary graph classes?
Also, what is the complexity of \textsc{Clique}, \textsc{Clique Cover}, and \textsc{Colouring} for $k$-moplex graphs?

\bibliographystyle{plain}
\bibliography{literature}

\end{document}